%
%
%
%
%
\documentstyle[12pt]{article}
\textwidth=16truecm
\textheight=21truecm
\hoffset=-1.2truecm
\voffset=-1.2truecm
\newtheorem{theorem}{Theorem}
\newtheorem{lemma}[theorem]{Lemma}
\newtheorem{proposition}[theorem]{Proposition}
\newtheorem{examp}{Example}
\newtheorem{corollary}[theorem]{Corollary}
\newtheorem{remar}[theorem]{Remark}
\newtheorem{ex}{Exercise}
\newtheorem{prob}{Open Problem}
\newenvironment{proof}{Proof:\ \ \ }{\QED}
\newenvironment{example}{\begin{examp}\rm}{\diams\end{examp}}
\newenvironment{exercise}{\begingroup\baselineskip=2pt \footnotesize\rm
\begin{ex}\sl}{\end{ex}\endgroup}
\newenvironment{problem}{\begin{prob}\rm}{\end{prob}}
\newenvironment{remark}{\begin{remar}\rm}{\end{remar}}

\newcommand{\diams}{\unskip\nobreak\hfil\penalty50%
\hskip1em\hbox{}\nobreak\hfil%
$\diamondsuit$\parfillskip=0pt\finalhyphendemerits=0}
\newcommand{\QED}{{\unskip\nobreak\hfil\penalty50%
\hskip1em\hbox{}\nobreak\hfil $\Box$%
\parfillskip=0pt \finalhyphendemerits=0 \par\medskip\noindent}}
\newcommand{\bfind}[1]{\index{#1}{\bf #1}}
\newcommand{\gloss}[1]{#1\glossary{\protect #1}}
\newcommand{\n}{\par\noindent}

\newcommand{\sn}{\par\smallskip\noindent}
\newcommand{\mn}{\par\medskip\noindent}
\newcommand{\bn}{\par\bigskip\noindent}
\newcommand{\pars}{\par\smallskip}
\newcommand{\parm}{\par\medskip}

\newcommand{\isom}{\simeq}
\newcommand{\fvklit}[1]{[#1]}
\newcommand{\ind}[1]{\index{#1}{#1}}

\newcommand{\ovl}[1]{\overline{#1}}

\newcommand{\ec}{\prec_{\exists}}
\newcommand{\sep}{^{\rm sep}}
\newcommand{\chara}{\mbox{\rm char}\,}
\newcommand{\trdeg}{\mbox{\rm trdeg}\,}

\newcommand{\Gal}{\mbox{\rm Gal}\,}

\newcommand{\adresse}{\par\bigskip \small\rm
   Mathematical Sciences Group, 
   University of Saskatchewan, \par
   106 Wiggins Road, 
   Saskatoon, Saskatchewan, Canada S7N 5E6 \par
   email: fvk@math.usask.ca }
%
%

\font\tenlv=msbm10 scaled 1200
\font\sevenlv=msbm7 scaled 1200
\font\fivelv=msbm5 scaled 1200
\def\lv #1{{\mathchoice{{\hbox{\tenlv #1}}}{{\hbox{\tenlv #1}}}
{{\hbox{\sevenlv #1}}}{{\hbox{\fivelv #1}}}}}

\newcommand{\N}{\lv N}
\newcommand{\Q}{\lv Q}

\newcommand{\Z}{\lv Z}
\newcommand{\F}{\lv F}
\newcommand{\Fp}{\F_p}
%
%
\begin{document}
\title{Additive Polynomials and Their Role in the Model Theory of Valued
Fields\footnote{This paper was written while I was a guest of the Equipe
G\'eom\'etrie et Dynamique, Institut Math\'ematiques de Jussieu, Paris,
and of the Equipe Alg\`ebre--G\'eom\'etrie at the University of
Versailles. I gratefully acknowledge their hospitality and support.
I was also partially supported by a Canadian NSERC grant and by a
sabbatical grant from the University of Saskatchewan. Furthermore I am
endebted to the organizers of the conference in Teheran and the members
of the IPM and all our friends in Iran for their hospitality and
support. I also thank Peter Roquette, Florian Pop and Philip Rothmaler
for their helpful suggestions and inspiring discussions, and the two
referees for their careful reading of the paper, their corrections
and numerous useful suggestions. The final revision of this paper was
done during my stay at the Newton Institute at Cambridge; I gratefully
acknowledge its support.}}
\author{Franz-Viktor Kuhlmann\\[.4cm]
{\small\sc Dedicated to Mahmood Khoshkam ($\dagger$ October 13, 2003)}}
\date{8.\ 7.\ 2005}
\maketitle
\begin{abstract}\noindent
{\footnotesize\rm
We discuss the role of additive polynomials and $p$-polynomials in the
theory of valued fields of positive characteristic and in their
model theory. We outline the basic properties of rings of additive
polynomials and discuss properties of valued fields of positive
characteristic as modules over such rings. We prove the existence of
Frobenius-closed bases of algebraic function fields $F|K$ in one
variable and deduce that $F/K$ is a free module over the ring of
additive polynomials with coefficients in $K$. Finally, we prove that
every minimal purely wild extension of a henselian valued field is
generated by a $p$-polynomial.}
\end{abstract}
%
%
%
%
\section{Introduction}
This paper is to some extent a continuation of my introductive and
programmatic paper [Ku3]. In that paper I pointed out that the
ramification theoretical {\it defect} of finite extensions of valued
fields is responsible for the problems we have when we deal with the
model theory of valued fields, or try to prove local uniformization in
positive characteristic.

In the present paper I will discuss the connection between the defect
and additive polynomials. I will state and prove basic facts about
additive polynomials and then treat several instances where they enter
the theory of valued fields in an essential way that is particularly
interesting for model theorists and algebraic geometers. I will show
that non-commutative structures (skew polynomial rings) play an
essential role in the structure theory of valued fields in positive
characteristic. Further, I will state the main open questions.
I will also include some exercises.

In the next section, I will give an introduction to additive polynomials
and describe the reasons for their importance in the model theory of
valued fields. For the convenience of the reader, I outline the
characterizations of additive polynomials in Section~\ref{sectchar}
and the basic properties of rings of additive polynomials in
Section~\ref{sectring}. For more information on additive polynomials,
the reader may consult [Go]. The remaining sections of this paper will
then be devoted to the proofs of some of the main theorems stated in
Section~\ref{sectreas}.

%
%
\section{Reasons for the importance of additive polynomials in the model
theory of valued fields}                    \label{sectreas}
A polynomial $f\in K[X]$ is called {\bf additive}\index{additive
polynomial} if
\begin{equation}                            \label{fa+b}
f(a+b)\;=\;f(a)+f(b)
\end{equation}
for all elements $a,b$ in every extension field $L$ of $K$, that is, if
the mapping induced by $f$ on $L$ is an endomorphism of the additive group
$(L,+)$. If $K$ is infinite, then $f$ is additive already when
condition~(\ref{fa+b}) is satisfied for all $a,b\in K$: see part b) of
Corollary~\ref{charaddpol} in Section~\ref{sectchar}.

It follows from the definition that an additive polynomial cannot have a
non-zero constant term. If the characteristic $\chara K$ is zero, then
the only additive polynomials over $K$ are of the form $cX$ with $c\in
K$. If $\chara K=p>0$, then the mapping $a\mapsto a^p$ is an
endomorphism of $K$, called the \bfind{Frobenius}. Therefore, the
polynomial $X^p$ is additive over any field of characteristic $p$.
Another famous and important additive polynomial is $\wp(X):=X^p-X$, the
additive \bfind{Artin-Schreier polynomial}. An extension of a field $K$
of characteristic $p$ generated by a root of a polynomial of the form
$X^p-X-c$ with $c\in K$ is called an \bfind{Artin-Schreier extension}.
We will see later that Artin-Schreier extensions play an important role
in the theory of fields in characteristic $p$.

Note that there are polynomials defined over a finite field which
are not additive, but satisfy the condition for all elements coming from
that field. For example, we know that $a^p=a$ and thus $a^{p+1}-a^2=0$
for all $a\in \F_p\,$. Hence, the polynomial $g(X):=X^{p+1}-X^2$
satisfies $g(a+b)=0=g(a)+g(b)$ for all $a,b\in \F_p\,$. But it is not an
additive polynomial. To show this, let us take an element $\vartheta$ in
the algebraic closure of $\F_p$ such that $\vartheta^p-\vartheta=1$.
Then $g(\vartheta)=\vartheta(\vartheta^p-\vartheta)=\vartheta$. On the
other hand, $g(\vartheta+1)=(\vartheta+1)((\vartheta+1)^p-(\vartheta
+1))=(\vartheta+1)(\vartheta^p+1^p-\vartheta-1)=\vartheta+1\ne\vartheta=
g(\vartheta)+g(1)$. Hence, already on the extension field
$\F_p(\vartheta)$, the polynomial $g$ does not satisfy the additivity
condition.

The following well known theorem gives a very useful characterization of
additive polynomials. I will present a proof in Section~\ref{sectchar}.

\begin{theorem}                             \label{thmaddpol}
Let $p$ be the characteristic exponent of $K$ (i.e., $p=\chara K$ if
this is positive, and $p=1$ otherwise). Take $f\in K[X]$. Then
$f$ is additive if and only if it is of the form
\begin{equation}                            \label{formaddpol}
f(X)\;=\;\sum_{i=0}^{m}c_i X^{p^i}\;\;\;\mbox{ with } c_i\in K\;.
\end{equation}
\end{theorem}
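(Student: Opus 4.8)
The plan is to prove the two implications separately. The ``if'' direction is routine: in characteristic $0$ we have $p=1$, so~(\ref{formaddpol}) reduces to $f(X)=cX$ with $c=\sum_ic_i$, which is patently additive; and in characteristic $p>0$, the Frobenius $a\mapsto a^p$ is an additive endomorphism of every extension field of $K$, hence so is each power map $a\mapsto a^{p^i}$, and therefore so is any $K$-linear combination of them, which is exactly the map induced by a polynomial of the form~(\ref{formaddpol}).

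For the ``only if'' direction, suppose $f$ is additive. First I would convert the additivity condition into a polynomial identity: applying~(\ref{fa+b}) with $a,b$ the coordinate functions $X,Y$ inside the rational function field $L=K(X,Y)$ yields $f(X+Y)=f(X)+f(Y)$ in $K[X,Y]$. Writing $f=\sum_nc_nX^n$ and expanding $f(X+Y)$ by the binomial theorem, the coefficient of each mixed monomial $X^kY^{n-k}$ with $0<k<n$ must vanish, that is, ${n\choose k}\,c_n=0$ in $K$ for every $n$ and every such $k$. Since an additive polynomial has no nonzero constant term, it remains only to show that $c_n=0$ whenever $n\ge1$ is not a power of $p$.

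In characteristic $0$ this is immediate: for $n\ge2$ the integer ${n\choose1}=n$ is a unit in $K$, so $c_n=0$ and $f(X)=c_1X$. In characteristic $p>0$ I would invoke the classical description of binomial coefficients modulo $p$ (Lucas' congruence, equivalently Kummer's carry count): when $n$ is not a power of $p$, there is some $k$ with $0<k<n$ and ${n\choose k}\not\equiv0\pmod p$ --- one may take $k=n_jp^j$ for the top nonzero digit $n_j$ in the base-$p$ expansion of $n$ if $n$ has at least two nonzero digits, and $k=p^j$ if $n=n_jp^j$ with $2\le n_j\le p-1$. For such a $k$ the relation ${n\choose k}\,c_n=0$ forces $c_n=0$, so the only exponents occurring in $f$ are powers of $p$, which is the asserted form~(\ref{formaddpol}).

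The only genuine obstacle here is this elementary (if slightly fiddly) fact about the $p$-divisibility of binomial coefficients; everything else is the binomial theorem and coefficient bookkeeping. If one wishes to avoid citing Lucas' theorem, the same conclusion can be reached by induction on $\deg f$, repeatedly stripping off the leading term, but the congruence statement makes the structure of the vanishing conditions most transparent.
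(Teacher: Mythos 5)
Your proof is correct, and the core algebraic content is the same as the paper's: reduce to the identity $f(X+Y)=f(X)+f(Y)$ in $K[X,Y]$, and then argue that the binomial forms $(X+Y)^n-X^n-Y^n$ vanish identically only when $n$ is a power of $p$. You make the latter step explicit via Lucas' congruence, which the paper actually just asserts without proof, so on that point your write-up is more complete.

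The structural difference is that the paper does not pass through the infinite extension $K(X,Y)$ directly. Instead it first proves Lemma~\ref{lemgXY}, a quantitatively sharper statement: if $g(X,Y)=f(X+Y)-f(X)-f(Y)$ vanishes merely on $A\times A$ for some set $A$ of cardinality at least $\deg f$ (in some extension), then already $g\equiv 0$ and $f$ has the form~(\ref{formaddpol}). The proof of that lemma is a degree-counting argument: for fixed $c$ the polynomial $g(c,Y)$ has degree strictly less than $\deg f$, so vanishing on $A$ forces it to vanish identically, and one bootstraps this to $g\equiv 0$. Theorem~\ref{thmaddpol} then falls out by taking $A=L$ for any infinite $L$. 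The extra generality of the lemma is not idle: it is exactly what is needed for Corollary~\ref{charaddpol}(b), which says that additivity on a finite field with at least $\deg f$ elements already implies additivity, and for part (a) about root sets forming a subgroup. Your direct route through $K(X,Y)$ is cleaner if the goal is only Theorem~\ref{thmaddpol}, but it does not by itself yield those corollaries; conversely, the paper buys the corollaries at the modest cost of the degree-counting lemma. Both proofs bottom out in the same arithmetic fact about $\binom{n}{k}$ modulo $p$, which you treat carefully and correctly.
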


Assume that $\chara K =p>0$. Then as a mapping on $K$, $X^p$ is equal to
the Frobenius endomorphism $\varphi$. Similarly, $X^{p^2}$ is equal to
the composition of $\varphi$ with itself, written as $\varphi^2$, and by
induction, we can replace $2$ by every integer $n$. On the other hand,
the monomial $X$ induces the identity mapping, which we may write as
$\varphi^0$. Note that addition and composition of additive mappings on
$(K,+)$ give again additive mappings (in particular, addition of
additive polynomials gives additive polynomials). It remains
to interpret the coefficients of additive polynomials as mappings. This is
easily done by viewing $K$ as a $K$-vector space: the mapping $c\cdot$
induced by $c\in K$ is given by multiplication $a\mapsto ca$, and it is
an automorphism of $(K,+)$ if $c\not=0$.
So $cX^{p^n}$ as a mapping is the composition of
$\varphi^n$ with $c\cdot\,$. We will write this composition as
$c\varphi^n$. Adding these monomials generates new additive mappings of
the form $\sum_{i=0}^{m}c_i \varphi^i$, and addition of such mappings
gives again additive mappings of this form. Composition of such additive
mappings generates again additive mappings, and the reader may compute that
they can again be written in the above form. In this way, we are
naturally led to considering the ring \gloss{$K[\varphi]$} of all
polynomials in $\varphi$ over $K$, where multiplication is given by
composition. From the above we see that this ring is a subring of the
endomorphism ring of the additive group of $K$. The correspondence that
we have worked out now reads as
\begin{equation}                            \label{corraddpol}
\sum_{i=0}^{m}c_i X^{p^i} \;\;\;\longleftrightarrow\;\;\;
\sum_{i=0}^{m}c_i \varphi^i \in K[\varphi]
\end{equation}
which means that both expressions describe the same additive mapping on $K$.
For instance, the additive Artin-Schreier polynomial $X^p -X$
corresponds to $\varphi -1\,$. Through the above correspondence, the
ring $K[\varphi]$ may be considered as the \bfind{ring of additive
polynomials over $K$}. Note that this ring is not commutative; in fact,
we have
\[\varphi c = c^p\varphi\;\;\; \mbox{ for all } c\in K\;.\]
This shows that assigning $\varphi\mapsto z$ induces an isomorphism of
$K[\varphi]$ onto the skew polynomial ring $K[z;\varphi]$. But we will
keep the notation ``$K[\varphi]$'' since it is simpler.

\pars
Let me state some basic properties of the ring $K[\varphi]$. For more
information, I recommend the comprehensive book ``Free rings and their
relations'' by \ind{P.~M.~Cohn} (\fvklit{C1}, \fvklit{C2}). Let $R$ be a
ring with $1\ne 0$. Equipped with a function $\mbox{ deg} :\> R
\setminus\{0\}\rightarrow
\N\cup\{0\}$, the ring $R$ is called \bfind{left euclidean} if for all
elements $s,s'\in R$, $s\not=0$, there exist $q,r\in R$ such that
\[
s'=qs + r\;\;\;\mbox{ with }\;\; r=0\;\mbox{ or }\;\deg r <\deg s\;,
\]
and it is called \bfind{right euclidean} if the same holds with
``$s'=sq + r$'' in the place of ``$s'=qs + r$''. (Usually, the function
deg is extended to $0$ by setting $\deg 0=-\infty$.) For example,
polynomial rings over fields equipped with the usual degree function are
both-sided euclidean rings. Further, an integral domain $R$ is called a
\bfind{left principal ideal domain} if every left ideal in $R$ is
principal (and analogously for ``right'' in the place of
``left''). I leave it to the reader to show that every left (or
right) euclidean integral domain is a left (or right) principal
ideal domain. Finally, an integral domain $R$ is called a \bfind{left
Ore domain} if
\[Rr\cap Rs\not=\{0\}\;\;\;\mbox{ for all $r,s\in R\setminus
\{0\}$}\;,\]
and it is called a \bfind{right Ore domain} if $rR\cap sR\not=\{0\}$
for all $r,s\in R\setminus \{0\}$. Every left (or right) Ore domain
can be embedded into a skew field (cf.\ \fvklit{C1}, \S 0.8,
Corollary 8.7). The reader may prove that a left (or right)
principal ideal domain is a left (or right) Ore domain.

The ring $K[\varphi]$ may be equipped with a degree function which
satisfies $\deg 0 =-\infty$ and $\deg \sum_{i=0}^{m}c_i \varphi^i = m$
if $c_m\not=0$. This degree function is a homomorphism of the
multiplicative monoid of $K[\varphi]\setminus\{0\}$ onto $\N\cup\{0\}$
since it satisfies $\deg rs = \deg r +\deg s$. In particular, this shows
that $K[\varphi]$ is an integral domain. The following theorem is due to
\ind{O.~Ore} \fvklit{O2}; I will give a proof in Section~\ref{sectring}.

\begin{theorem}                             \label{Kphi}
The ring $K[\varphi]$ is a left euclidean integral domain and thus also
a left principal ideal domain and a left Ore domain. It is right euclidean
if and only if $K$ is perfect; if $K$ is not perfect, then $K[\varphi]$ is
not even right Ore.
\end{theorem}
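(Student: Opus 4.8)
The plan is to establish the four assertions in turn. \emph{Left euclidean:} given $s'=\sum_{i=0}^{n}b_i\varphi^i$ and $s=\sum_{i=0}^{m}c_i\varphi^i$ with $c_m\ne 0$ and $n\ge m$, I subtract off a suitable left multiple of $s$ to lower the degree. The leading term of $(b_n c_m^{-p^{n-m}})\varphi^{n-m}\cdot s$ is $b_n c_m^{-p^{n-m}}c_m^{p^{n-m}}\varphi^n=b_n\varphi^n$, because the commutation rule $\varphi c=c^p\varphi$ forces the coefficient $c_m$ to be raised to the $p^{n-m}$ power when it is pushed past $\varphi^{n-m}$; note this uses only that $K$ is a field (not that it is perfect), since we only need $c_m^{p^{n-m}}$, a $p$-power, to be invertible, which it is. Iterating, we reach a remainder of degree $<m$ after finitely many steps, which proves the left division algorithm. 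The consequences --- left principal ideal domain and left Ore domain --- are exactly the implications the author has already asked the reader to verify in the preceding paragraph, so I simply invoke them.

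\emph{Right euclidean when $K$ is perfect:} here the symmetric argument needs to cancel the leading term of $s'$ by forming $s\cdot(a\varphi^{n-m})$ for a suitable $a\in K$. The leading term of $s\cdot a\varphi^{n-m}$ is $c_m a^{p^m}\varphi^n$, so I need to solve $c_m a^{p^m}=b_n$, i.e.\ $a=(b_n c_m^{-1})^{p^{-m}}$. This $p^m$-th root exists in $K$ precisely because $K$ is perfect, and then the same iteration gives a right division algorithm; right euclidean again implies right principal ideal domain and right Ore.

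\emph{The converse --- if $K$ is not perfect then $K[\varphi]$ is not right Ore:} this is the heart of the matter and the step I expect to be the main obstacle, since it requires producing two nonzero elements $r,s$ with $rK[\varphi]\cap sK[\varphi]=\{0\}$. Pick $c\in K\setminus K^p$ and set $r=\varphi$ and $s=\varphi c=c^p\varphi$ --- wait, these clearly have common right multiples; instead the right choice is $r=\varphi$ and $s=c$ (a degree-zero element), or more robustly $r=\varphi$, $s=\varphi+c$ will not work either. The correct pair is $r=\varphi$ and $s=c\varphi$: a common right multiple would be $\varphi g=c\varphi h$ for nonzero $g,h$, i.e.\ $\varphi g$ and $c\varphi h$ are equal in $K[\varphi]$; writing $g=\sum a_i\varphi^i$ gives $\varphi g=\sum a_i^p\varphi^{i+1}$, whose every coefficient lies in $K^p$, whereas $c\varphi h=\sum c\, b_i^p\varphi^{i+1}$ has leading coefficient $c b_j^p\notin K^p$ (since $c\notin K^p$ and $K^p$ is a subfield closed under the $p$-power map on nonzero elements), a contradiction. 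Thus $\varphi K[\varphi]\cap c\varphi K[\varphi]=\{0\}$ and the ring is not right Ore; a fortiori it is not right principal ideal nor right euclidean. I would double-check this computation carefully, since the asymmetry between ``every coefficient of $\varphi g$ is a $p$-th power'' and ``some coefficient of $c\varphi h$ is not'' is exactly the place where perfectness of $K$ is used, and getting the indices and the direction of the commutation rule right is the one genuinely delicate point.

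Finally I would remark that the failure of the right Ore property also shows that $K[\varphi]$, while it embeds in a skew field via the left Ore condition, is genuinely noncommutative and ``one-sided'' when $K$ is imperfect --- precisely the phenomenon the paper wishes to highlight.
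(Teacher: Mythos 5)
Your proposal is correct and follows essentially the same route as the paper: the same degree-lowering computations (left-multiplying by $b_n c_m^{-p^{n-m}}\varphi^{n-m}$, right-multiplying by $(c_m^{-1}b_n)^{1/p^m}\varphi^{n-m}$ when $K$ is perfect), and the same witness pair $\varphi$, $c\varphi$ with the ``coefficients in $K^p$ versus $cK^p$'' argument to defeat the right Ore condition. The only cosmetic difference is that you iterate the degree-lowering step explicitly, while the paper phrases it as choosing $q$ to minimize $\deg(s'-qs)$; the two are equivalent.
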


\pars
\begin{example}
Let $\F_p$ denote the field with $p$ elements. The ring $\Fp[\varphi]$
is a both-sided euclidean integral domain, and every field $K$ of
characteristic $p$ is a left $\Fp[\varphi]$-module and a left
$K[\varphi]$-module, where the action of $\varphi$ on $K$ is just the
application of the Frobenius endomorphism. $K$ is perfect if and only if
every element of $K$ is divisible by the ring element $\varphi$. But
this does not imply that $K$ is a divisible $\Fp[\varphi]$- or
$K[\varphi]$-module. For instance, if $K$ admits non-trivial
Artin-Schreier extensions, that is, if $K\not= \wp(K) = (\varphi-1)K$,
then there are elements in $K$ which are not divisible by $\varphi -1$.
On the other hand, $K$ is a divisible $\Fp[\varphi]$- and
$K[\varphi]$-module if $K$ is algebraically closed.

Observe that $K$ is not torsion free as an $\Fp[\varphi]$- or
$K[\varphi]$-module. Indeed, $K$ contains $\Fp$ which satisfies
\[(\varphi-1)\Fp=\{0\}\;.\]
\end{example}

\begin{example}
The power series field $K:=\F_p((t))=\{\sum_{i=N}^{\infty}c_i t^i\mid
N\in \Z\,,\,c_i\in K\}$ (also called ``field of formal Laurent series
over $\Fp$'') is not perfect, since $t$ does not admit a $p$-th root in
$K$. Hence, the ring $K[\varphi]$ is not right Ore. $K$ is a left
$K[\varphi]$-module.
\end{example}

In Section~\ref{sectring}, Remark~\ref{remCohn}, I will collect
a few properties of the rings $K[\varphi]$ that follow from
Theorem~\ref{Kphi}, and describe what happens if $K$ is not perfect. We
will see that in that case the structure of $K[\varphi]$-modules becomes
complicated. It seems that the ``bad'' properties of $K[\varphi]$, for
$K$ non-perfect, are symptomatic for the problems algebraists and model
theorists have with non-perfect valued fields in positive
characteristic. Let me discuss the most prominent of such non-perfect
valued fields.

The field $\F_p((t))$ carries a canonical valuation $v_t\,$, called the
$t$-adic valuation. It is given by $v_t \sum_{i=N}^{\infty}c_i t^i =
N$ if $c_N\ne 0$ and $v_t 0=\infty$.
$(\F_p((t)),v_t)$ is a complete discretely valued field, with value
group $v_t\F_p((t))=\Z$ (that is what ``discretely valued'' means) and
residue field $\F_p((t))v_t =\F_p\,$. At the first glance, such fields
may appear to be the best known objects in valuation theory.
Nevertheless, the following prominent questions about the elementary
theory Th$(\F_p((t)),v_t)$ are still unanswered:

\begin{problem}
Is the elementary theory of the valued field $\F_p((t))$ model complete?
Does it admit quantifier elimination in some natural language? Is it
decidable? If yes, what would be a complete recursive axiomatization?
\end{problem}

The corresponding problem for the $p$-adics was solved in the mid 1960s
independently by Ax and Kochen [A--K] and by Ershov [E]. Since then, the
above problem has been well known to model-theoretic algebraists, but
resisted all their attacks.

\parm
Encouraged by the similarities between $\F_p((t))$ and the field $\Q_p$
of $p$-adic numbers, one might try to give a complete axiomatization for
$\mbox{\rm Th}(\F_p((t)),v_t)$ by adapting the well known axioms for
$\mbox{\rm Th}(\Q_p,v_p)$. They express that $(\Q_p,v_p)$ has the
following elementary properties:
\sn
$\bullet$ \ It is a henselian valued field of characteristic 0. A valued
field $(K,v)$ is called henselian if it satisfies Hensel's Lemma: {\it
If $f$ is a polynomial with coefficients in the valuation ring ${\cal
O}$ of $v$ and if $b\in {\cal O}$ such that $vf(b)>0$ while $vf'(b)=0$,
then there is some $a\in {\cal O}$ such that $f(a)=0$ and $v(a-b)>0$.}
This holds if and only if the extension of $v$ to the algebraic closure
of $K$ is unique.
\sn
$\bullet$ \ Its value group a $\Z$-group, i.e., an ordered abelian group
elementarily equivalent to $\Z$.
\sn
$\bullet$ \ Its residue field is $\F_p\,$.
\sn
$\bullet$ \ $v_p p$ is equal to $1$ ($=$ the smallest
positive element in the value group).
\sn
The last condition is not relevant for $\F_p((t))$ since there, $p\cdot
1=0$. Nevertheless, we may add a constant name $t$ to ${\cal L}$ so that
one can express by an elementary sentence that $v_t t=1$.

A naive adaptation would just replace ``characteristic 0'' by
``characteristic $p$'' and $p$ by $t$. But
there is an elementary property of valued fields that is satisfied by
all valued fields of residue characteristic 0 and all formally $p$-adic
fields, but not by all valued fields in general. It is the property of
being defectless. A valued field $(K,v)$ is called {\bf defectless} if
for every finite extension $L|K$, equality holds in
the {\bf fundamental inequality}
\begin{equation}                            \label{fundineq}
n\;\geq\;\sum_{i=1}^{\rm g} {\rm e}_i {\rm f}_i\;,
\end{equation}
where $n=[L:K]$ is the degree of the extension, $v_1,\ldots,v_{\rm g}$
are the distinct extensions of $v$ from $K$ to $L$, ${\rm e}_i=
(v_iL:vK)$ are the respective ramification indices, and ${\rm f}_i=
[Lv_i:Kv]$ are the respective inertia degrees. (Note that ${\rm g}=1$ if
$(K,v)$ is henselian.) There is a simple example, due to F.~K.~Schmidt,
which shows that there is a henselian discretely valued field of
positive characteristic which is not defectless (cf.\ [Ri], Exemple 1,
p.\ 244). This field has a finite purely inseparable extension with
non-trivial defect. But defect does not only appear in purely inseparable
extensions: there is an example, due to A.\ Ostrowski, of a complete
valued field admitting a finite separable extension with non-trivial
defect (cf.\ [Ri], Exemple 2, p.\ 246). These and several other examples
of extensions with non-trivial defect of various types can also be found
in [Ku12] (see also [Ku8]).

However, each power series field with its canonical valuation is
henselian and defectless (cf.\ [Ku12]). In particular, $(\F_p((t)),v_t)$
is defectless. For a less naive adaptation of the axiom system of
$\Q_p$, we will thus add ``defectless''. We obtain the following axiom
system in the language ${\cal L}(t)={\cal L}\cup\{t\}$:
\begin{equation}                            \label{AS1}
\left.\begin{array}{l}
(K,v)\mbox{ is a henselian defectless valued field}\\
K\mbox{ is of characteristic }p\\
vK\mbox{ is a $\Z$-group}\\
Kv=\F_p\\
vt\mbox{ is the smallest positive element in }vK\;.
\end{array}\;\;\;\right\}
\end{equation}
Let us note that also $(\F_p(t),v_t)^h$, the henselization of $(\F_p(t),
v_t)$, satisfies these axioms. The \bfind{henselization} of a valued
field $(K,v)$ is a henselian algebraic extension which is minimal in the
sense that it admits a (unique!) embedding over $K$ in every henselian
extension of $(K,v)$. Henselizations exist for all valued fields, and
they are separable extensions (cf.\ [Ri], Th\'eor\`eme 2, p. 176). It is
well known that $(\F_p(t),v_t)^h$ is a defectless field, being the
henselization of a global field (cf.\ [Ku9]). It is also well known that
$(\F_p(t),v_t)^h$ is existentially closed in $(\F_p((t)),v_t)$ (see
below for the definition of this notion); this fact follows from work of
Greenberg [Gre] and also from Theorem 2 of [Er1] (see also [Ku7] for
some related information). But it is not known whether $(\F_p((t)),v_t)$
is an elementary extension of $(\F_p(t),v_t)^h$.

It did not seem unlikely that axiom system (\ref{AS1})
could be complete, until I proved in [Ku1] (cf.\ [Ku4]):
\begin{theorem}                             \label{notc}
The axiom system (\ref{AS1}) is not complete.
\end{theorem}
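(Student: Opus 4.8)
The plan is to prove incompleteness by producing a second model. Equivalently, the theorem says that the Ax--Kochen--Ershov principle fails within the class defined by (\ref{AS1}): I want two valued fields, both satisfying all the axioms of (\ref{AS1}) (so both henselian, defectless, of characteristic $p$, with residue field $\F_p$, with $\Z$-group value group, and with $vt$ the least positive element), which are \emph{not} elementarily equivalent. One of them is $(\F_p((t)),v_t)$ itself; the other I will build.

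To tell the two fields apart I would use a sentence coming from additive polynomials, which is natural since, by Theorem~\ref{thmaddpol}, in characteristic $p$ every additive polynomial is a $p$-polynomial $\sum_i c_i X^{p^i}$, and over a power series field such equations can often be solved explicitly: the part of a solution in the negative half of the value group is built coefficient by coefficient, and its ``tail'' on the valuation ring is supplied by Hensel's Lemma. Thus for a suitably chosen $p$-polynomial $f$ with coefficients in $\F_p[t]$ one gets an ${\cal L}(t)$-sentence $\psi_f$ of the shape ``for every $a$ with [a first-order condition on $v(a)$] there is $b$ with $f(b)=a$'', and one verifies that $\psi_f$ holds in $(\F_p((t)),v_t)$.

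The second model is constructed so that $\psi_f$ fails. Start from a rational function field over $\F_p$ equipped with a valuation having value group a $\Z$-group (one may take $\Z$), residue field $\F_p$, and $vt$ least positive, arranged so that $f(X)=a_0$ has no solution for some explicit $a_0$; then pass to the henselization and then to a maximal immediate extension $(K,v)$. Since henselizations and immediate extensions change neither the value group nor the residue field, $(K,v)$ still has $\Z$-group value group and residue field $\F_p$ with $vt$ least positive, and it is henselian. The key structural point is that $(K,v)$ is also defectless: a maximal immediate extension is algebraically maximal, and a henselian algebraically maximal valued field whose value group is a $\Z$-group and whose residue field is perfect is defectless. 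If the construction is set up so that $f(X)=a_0$ is still unsolvable in $(K,v)$, then $(K,v)$ models (\ref{AS1}) but $\psi_f$ fails in it, so $(K,v)\not\equiv(\F_p((t)),v_t)$ and (\ref{AS1}) is incomplete.

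The hard part is reconciling the two requirements on the constructed field: forcing $f(X)=a_0$ to have no solution, and forcing defectlessness. Maximalizing makes the field defectless but adds many new algebraic elements, and one has to guarantee that none of them solves $f(X)=a_0$. This is where the ring theory of Theorem~\ref{Kphi} does the work: $K[\varphi]$ is a left euclidean domain, hence a left principal ideal domain, so there is division with remainder for $p$-polynomials, and this lets one control, via the valuations of candidate solutions, which additive equations can become solvable when one adjoins roots of other additive polynomials — so $f$ and the starting data can be chosen to make unsolvability of $f(X)=a_0$ survive both the henselization and the maximal immediate extension. The remaining verification, that $\psi_f$ really holds in $\F_p((t))$ and really is first-order, is more routine but must still be carried out carefully, once more reducing to the $p$-polynomial form via Theorem~\ref{thmaddpol}.
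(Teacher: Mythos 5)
There is a genuine gap, and it lies precisely in the part of the argument you describe as ``the hard part.'' Your plan builds the second model as a maximal immediate extension of $(\F_p(t),v_t)^h$. But such a field still has value group exactly $\Z$, and the paper records that every henselian defectless field with value group $\Z$ is extremal (with respect to every polynomial, in any number of variables). Moreover, the paper notes explicitly that the kind of sentence needed here --- the statement that the image of the relevant additive polynomial has the optimal approximation property, or equivalently a statement of the form $\forall X\exists X_0,\ldots,X_{p^k-1},Y: X=X_0^{p^k}-X_0+tX_1^{p^k}+\cdots+t^{p^k-1}X_{p^k-1}^{p^k}+Y\wedge vY\geq 0$ --- holds not only in $\F_p((t))$ but in \emph{every} maximal field satisfying axiom system (\ref{AS1}). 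So maximalizing is self-defeating: the model you produce will satisfy the candidate distinguishing sentence rather than refute it, and no amount of euclidean-division bookkeeping in $K[\varphi]$ can rescue unsolvability of $f(X)=a_0$ when the target model is forced to be extremal. The appeal to Theorem~\ref{Kphi} here is in any case only a gesture; left principal ideal structure of $K[\varphi]$ does not by itself give control over which additive equations become solvable after passing to a maximal immediate extension.

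The paper's construction avoids exactly this trap. It takes the second model $(L,v)$ to be a transcendental extension of $(\F_p((t)),v_t)$ of transcendence degree $1$, with the same residue field $\F_p$ and with value group a $\Z$-group that \emph{properly} contains $\Z$ (by (\ref{wtdgeq}), the value group must grow since the residue field does not). Crucially, $(L,v)$ is \emph{not} a maximal field; henselianity and defectlessness are verified directly for the construction. The enlarged value group is what creates the room for an element $c\in L$ for which the values $v(f(\bar{x})-c)$, $\bar{x}$ ranging over $L$, approach but never attain their supremum, so that the optimal approximation property for the chosen additive polynomial $X_0^p-X_0+tX_1^p+\cdots+t^{p-1}X_{p-1}^p$ fails in $(L,v)$ while holding in $\F_p((t))$. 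That is the elementary sentence separating the two models. Your idea of using a sentence built from a $p$-polynomial, checked in $\F_p((t))$ by building a solution ``from the top down'' and finishing by Hensel, is the right instinct, and the extremality/optimal-approximation formulation is essentially the precise version of ``surjectivity up to something of value $\geq 0$'' that you were reaching for. But you need a second model that is a carefully controlled \emph{non}-maximal henselian defectless field with strictly larger value group, not a maximal immediate extension of a smaller one.
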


I will give an idea of the proof of this theorem in
Section~\ref{sectepap} below. It was inspired by an observation of Lou
van den Dries. He had worked with a modified axiom system (with larger
residue fields) and had found an elementary sentence which he was not
able to deduce from that axiom system (as it turned out, that wasn't
van den Dries' fault). This sentence was formulated using
only addition, multiplication with the element $t$ and application of
the Frobenius, but no general multiplication. This led van den Dries to
the question whether one could at least determine the model theory of
$\F_p((t))$ as a module which admits multiplication with $t$ and
application of the Frobenius, forgetting about general multiplication.
But this means that we view $\F_p((t))$ as a left $K[\varphi]$-module,
where the field $K$ contains $t$ and should be contained in $\F_p((t))$.
But then, $K$ is not perfect, and therefore the structure of
$K[\varphi]$-modules may be quite complicated.

There is a common feeling that additive polynomials play a crucial role
in the theory of valued fields of positive characteristic. So indeed,
van den Dries' question may be the key to the model theory of
$\F_p((t))$ (but it could be as hard to solve as the original problem).
In this paper, I will give some reasons for this common feeling, but
also confront it with our present problem of understanding the notion
of extremality.

%
%
\subsection{Reason \#1: Kaplansky's hypothesis A}
For a valued field $(K,v)$, we denote by $vK$ its value group and by
$Kv$ its residue field. An extension $(K,v)\subset (L,v)$ of valued
fields is called \bfind{immediate} if the induced embeddings of $vK$ in
$vL$ and of $Kv$ in $Lv$ are onto. Henselizations are immediate
extensions (cf.\ [Ri], Corollaire 1, p.\ 184). Wolfgang Krull [Kr] (see
also [Gra]) proved that every valued field admits a maximal immediate
extension. A natural and in fact very important question is whether this
is unique up to (valuation preserving) isomorphism. This plays a role
when one wishes to embed valued fields in power series fields. In his
celebrated paper [Ka1], Irving Kaplansky gave a criterion, called
``hypothesis A'', which guarantees uniqueness. (We will present it
later.) Kaplansky then showed that a valued field $(K,v)$ of positive
characteristic having a cross-section and satisfying hypothesis A
can be embedded in the power series field $Kv((vK))$ over its residue
field $Kv$ with exponents in its value group $vK$. Kaplansky also gives
examples which show that this may fail if hypothesis A is not satisfied.
In this case, there are \bfind{maximal fields} (= valued fields not
admitting any proper immediate extensions) which do not have the form of
a power series field (not even if one allows factor systems).

If we are considering an elementary class of valued fields satisfying
hypothesis A (which can be expressed by a recursive scheme of
elementary sentences in the language of valued rings), then the
uniqueness of maximal immediate extensions can be fruitfully used in the
proof of model theoretic properties. Let us give the example of
algebraically maximal Kaplansky fields. A valued field is called
\bfind{algebraically maximal} if it does not admit any proper immediate
algebraic extension. It is called a \bfind{Kaplansky field} if it
satisfies hypothesis A. The following theorem is due to Ershov [Er1]
and, independently, Ziegler [Zi].
\begin{theorem}
The elementary theory of an algebraically maximal Kaplansky field is
completely determined by the elementary theory of its value group and
the elementary theory of its residue field.
\end{theorem}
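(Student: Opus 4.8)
The assertion is an instance of the Ax--Kochen--Ershov principle; since the value group and the residue field are interpretable in the valued field, the real content is the reverse implication: if $(K,v)$ and $(K',v')$ are algebraically maximal Kaplansky fields with $vK\equiv v'K'$ (as ordered abelian groups) and $Kv\equiv K'v'$ (as fields), then $(K,v)\equiv (K',v')$. The plan is the standard saturation/back-and-forth argument. One first checks that ``henselian'', ``Kaplansky'' (Hypothesis~A, a recursive scheme of sentences by the remarks above) and ``algebraically maximal'' are elementary conditions --- the last because ``no proper immediate algebraic extension'' unravels into a scheme (for each $n$, no immediate extension of degree $n$), supplemented, in the case of equal characteristic $p$, by perfectness of the field. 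We may therefore pass to $\kappa$-saturated elementary extensions $(K_1,v_1)$ of $(K,v)$ and $(K_1',v_1')$ of $(K',v')$, for a suitably large regular $\kappa$; these remain algebraically maximal Kaplansky fields and satisfy $v_1K_1\equiv v_1'K_1'$ and $K_1v_1\equiv K_1'v_1'$. It then suffices to prove that the family $I$ of ``good'' partial isomorphisms --- valuation-preserving isomorphisms $\sigma\colon K_0\to K_0'$ between valued subfields of cardinality $<\kappa$ whose induced maps on the value groups and on the residue fields are partial elementary --- has the back-and-forth property; since $I$ is nonempty, this gives $(K_1,v_1)\equiv (K_1',v_1')$, hence $(K,v)\equiv (K',v')$.

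Given $\sigma\in I$ and $a\in K_1$, one must extend $\sigma$ within $I$ so that $a$ lies in its domain (and symmetrically). The preparatory move is to enlarge $K_0$, first to its henselization and then to a maximal immediate algebraic extension $\widetilde{K_0}$. Here the two hypotheses enter crucially. Because $(K_1,v_1)$ is algebraically maximal --- equivalently, every pseudo-Cauchy sequence of algebraic type over $K_1$ already has a pseudo-limit in $K_1$ --- the field $\widetilde{K_0}$ can be chosen inside $K_1$; and because $K_0$ is a Kaplansky field, Kaplansky's theorem guarantees that $\widetilde{K_0}$ is unique up to valuation-preserving isomorphism over $K_0$. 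The same applies on the $K'$-side, so $\sigma$ extends to an isomorphism $\widetilde{K_0}\to\widetilde{K_0'}$; as henselization and immediate algebraic extensions change neither value group nor residue field, this extension still lies in $I$.

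Now assume $K_0$ is henselian and algebraically maximal and take $a\in K_1\setminus K_0$. By the classification of simple extensions of valued fields it suffices to treat the cases where $K_0(a)|K_0$ does exactly one of three things; the general case then follows by iteration. If it enlarges the value group, then the isomorphism type of $K_0(a)$ over $K_0$ is controlled by the type of $v_1a$ over $v_1K_0$ (together with a residue in the torsion sub-case), and using $\kappa$-saturation of $(K_1',v_1')$ one finds $a'\in K_1'$ whose value realizes the image of that type under the induced partial elementary map of value groups, and extends $\sigma$ by $a\mapsto a'$. If it enlarges the residue field, the symmetric argument on the residue side applies, using $\kappa$-saturation of $(K_1',v_1')$ and the induced partial elementary map of residue fields. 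In the remaining case $K_0(a)|K_0$ is immediate, so $a$ is a pseudo-limit of a pseudo-Cauchy sequence from $K_0$ with no pseudo-limit in $K_0$; since $K_0$ is now algebraically maximal, that sequence is necessarily of transcendental type, and Kaplansky's analysis shows $K_0(a)|K_0$ to be an immediate transcendental extension whose isomorphism type over $K_0$ depends only on the sequence. A pseudo-limit $a'$ of the $\sigma$-image sequence is then found in $K_1'$ by $\kappa$-saturation, the corresponding partial type over $K_0'$ being finitely satisfiable, and $\sigma$ extends by $a\mapsto a'$. In every case the extension is again good, and running the back-and-forth while taking unions at limit stages finishes the proof.

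The step I expect to be the main obstacle is the passage to $\widetilde{K_0}$. One has to argue carefully that a maximal immediate algebraic extension of a small subfield already sits inside the given algebraically maximal field --- which depends on the description of algebraic maximality by means of pseudo-Cauchy sequences of algebraic type --- and one has to invoke Kaplansky's uniqueness theorem to glue the two sides together. This is precisely the point where Hypothesis~A is indispensable: without it the defect of algebraic-type pseudo-Cauchy sequences destroys the uniqueness of $\widetilde{K_0}$, and the back-and-forth collapses. A subsidiary but necessary preliminary is to confirm that ``algebraically maximal Kaplansky field'' really forms an elementary class, so that the passage to saturated elementary extensions is legitimate.
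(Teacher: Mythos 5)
The paper does not prove this theorem; it is quoted with attribution to Ershov [Er1] and, independently, Ziegler [Zi], so there is no in-paper proof to compare your argument against. Judged on its own, your outline is the standard Ax--Kochen--Ershov back-and-forth scheme, and its overall architecture --- passing to $\kappa$-saturated elementary extensions, enlarging a small subfield $K_0$ first to its henselization and then to a maximal immediate algebraic extension $\widetilde{K_0}$ realized inside the saturated model, splitting the extension step into value, residue, and immediate cases, and invoking Kaplansky's uniqueness theorem (Hypothesis~A) to glue the two sides --- is indeed how such results are established. You also correctly identify the place where Hypothesis~A is indispensable: without uniqueness of maximal immediate algebraic extensions over $K_0$, the $\widetilde{K_0}$-step collapses.

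The one genuine gap is your treatment of elementarity of ``algebraically maximal.'' You assert that this ``unravels into a scheme (for each $n$, no immediate extension of degree $n$),'' but the absence of an immediate extension of degree $n$ involves quantification over field extensions and is not visibly a first-order sentence; adding perfectness in equal characteristic does not repair this. This is not a footnote: the paper devotes real work to precisely this point. Theorem~\ref{extram} (Delon/Ershov, reproved in [Ku8]) shows that a valued field is algebraically maximal if and only if it is extremal with respect to every polynomial in one variable, and extremality is manifestly an elementary scheme; and in equal characteristic $p$, Theorem~\ref{amp-cl=Kap} (this paper, Section~\ref{sectp-cl}) shows that ``henselian algebraically maximal Kaplansky field'' is equivalent to ``henselian $p$-closed,'' another transparently elementary class. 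Either of these should replace your hand-wave before the passage to saturated models is legitimate. A secondary point worth tightening: realizing $\widetilde{K_0}$ inside $K_1$ is not a direct consequence of $K_1$ being algebraically maximal over itself; you need $\kappa$-saturation to realize the (small) pseudo-limit types over $K_0$, together with Kaplansky's observation that an algebraic-type pseudo-Cauchy sequence admits a limit $a$ with $K_0(a)|K_0$ immediate --- your wording conflates these.
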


In other words, algebraically maximal Kaplansky fields satisfy the
following \bfind{Ax--Kochen--Ershov principle}:
\begin{equation}                            \label{AKEeq}
vK\>\equiv\>vL\;\wedge\; Kv\>\equiv\>Lv\;\;\Longrightarrow\;\;
(K,v)\>\equiv\>(L,v)
\end{equation}
where the first elementary equivalence is in the language of ordered
groups, the second in the language of rings and the third in the
language of valued rings. In the case of $(K,v)\subseteq (L,v)$ there is
also a version of the Ax--Kochen--Ershov principle with ``$\equiv$''
replaced by ``$\prec$'' (elementary extension). In the same
situation, there is also the more basic version
\begin{equation}                            \label{AKEec}
vK\>\ec\>vL\;\wedge\; Kv\>\ec\>Lv\;\;\Longrightarrow\;\;
(K,v)\>\ec\>(L,v)
\end{equation}
where ``$\ec$'' means ``\bfind{existentially closed in}'', that is,
every existential elementary sentence which holds in the upper structure
also holds in the lower structure. In fact, it has turned out that
proving this version is the essential step in proving Ax--Kochen--Ershov
principles and other model theoretic results about valued fields; the
further results then often follow by general model theoretic arguments
(the reader should think of Robinson's Test).

Hypothesis A implicitly talks about additive polynomials. Following
Kaplansky [Ka2], we will call a polynomial $f\in K[X]$ a
\bfind{$p$-polynomial} if it is of the form
\begin{equation}
f(X)\;=\; {\cal A}(X)+c\,,
\end{equation}
where ${\cal A}\in K[X]$ is an additive polynomial, and $c$ is a
constant in $K$. A field $K$ of characteristic $p>0$ will be called
\bfind{$p$-closed} if every $p$-polynomial in $K[X]$ has a root in $K$.
That is,
\begin{center}
\it $K$ is $p$-closed if and only if it is a divisible
$K[\varphi]$-module.
\end{center}
In particular, every $p$-closed field is perfect.

\pars
Now hypothesis A for a valued field $(K,v)$ with $\chara Kv=p>0$ reads
as follows:
\sn
{\bf (A1)} \ the value group $vK$ is $p$-divisible, and\n
{\bf (A2)} \ the residue field $Kv$ is $p$-closed.
\sn
For valued fields $(K,v)$ with $\chara Kv=0$, hypothesis A is empty. The
condition of a field to be $p$-closed seemed obscure at the time of
Kaplansky's paper. But we have learned to understand this condition
better. The following theorem was first proved by Whaples in
\fvklit{Wh2}, using the cohomology theory of additive polynomials. A
more elementary proof was later given in \fvklit{Del}. Then Kaplansky
gave a short and elegant proof in his ``Afterthought: Maximal Fields
with Valuation'' ([Ka2]). We will reproduce this proof in
Section~\ref{sectp-cl}.

\begin{theorem}                             \label{Whaples}
A field $K$ of characteristic $p>0$ is $p$-closed if and only if it does
not admit any finite extensions of degree divisible by $p$.
\end{theorem}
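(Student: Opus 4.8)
The plan is to prove the two implications separately, after two common reductions. First, since a $p$-closed field is perfect (as noted above) while an imperfect $K$ has, for any $a\in K\setminus K^p$, both a degree-$p$ purely inseparable extension and the rootless $p$-polynomial $X^p-a$, in each direction we may assume $K$ perfect; and then an inseparable additive polynomial ${\cal B}(X^{p^s})$ with ${\cal B}$ separable has the same image on $K$ as ${\cal B}$ (because $K^{p^s}=K$), so it suffices to compare ``$K$ has no finite extension of degree divisible by $p$'' with ``every separable additive ${\cal A}\in K[X]$ is surjective as a map $K\to K$''. Second, a purely Galois-theoretic Sylow argument over the Galois closure (given a finite extension of degree divisible by $p$, pass to $N$ with $\Gal(N|K)$ of order divisible by $p$, take a Sylow $p$-subgroup $P$, and inside $N$ over $N^P$ locate a cyclic degree-$p$ extension) shows that, for perfect $K$, the condition ``no finite extension of degree divisible by $p$'' is equivalent to ``$\wp(L)=L$ for every finite extension $L|K$'', i.e.\ every finite extension of $K$ is Artin--Schreier closed.

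For the implication $\Leftarrow$, let ${\cal A}$ be separable additive with kernel $V\subset K\sep$ and splitting field $K_1=K(V)$; by hypothesis $[K_1:K]$ is prime to $p$. Over $K_1$ the polynomial ${\cal A}$ is a composition of linear additive polynomials $X^p-\lambda_iX$ (with $\lambda_i\in K_1$), and each of these is surjective on $K_1$: after the substitution $X=\mu_iY$ with $\mu_i^{p-1}=\lambda_i$ it becomes $\wp$, which is onto $K_1$ because $K_1$, being a finite extension of $K$, is Artin--Schreier closed by the reduction above. So ${\cal A}(X)+c=0$ has a root $a_0\in K_1$, whence $\delta\mapsto\delta(a_0)-a_0\in V$ is a $1$-cocycle of $\Gal(K_1|K)$ with values in $V$; since $H^1$ of a prime-to-$p$ group with $p$-torsion coefficients vanishes, this cocycle is a coboundary $\delta(w)-w$ ($w\in V$), and $a_0-w$ is then a root of ${\cal A}(X)+c$ fixed by $\Gal(K_1|K)$, hence in $K$. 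So $K$ is $p$-closed.

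For the implication $\Rightarrow$, suppose $K$ is $p$-closed; by the second reduction it suffices to show $\wp(E)=E$ for every finite $E|K$. Fixing a $K$-basis $\omega_1,\dots,\omega_m$ of $E$ and writing $\omega_i^p=\sum_j a_{ij}\omega_j$ with $(a_{ij})\in GL_m(K)$ (invertibility because $E$ is perfect), the additive map $\wp\colon E\to E$ becomes, in these coordinates, the left action on $K^m$ of the matrix $M=A^{\!T}\varphi-I$ over $K[\varphi]$, so $\wp(E)=E$ iff $M\colon K^m\to K^m$ is onto. Since $K$ is perfect, $K[\varphi]$ is two-sided euclidean (Theorem~\ref{Kphi}), so $M$ can be reduced, by invertible row and column operations over $K[\varphi]$ (which act bijectively on $K^m$), to a diagonal matrix with entries $e_i\in K[\varphi]$; thus $\wp$ is onto iff each $e_i\colon K\to K$ is onto. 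A top-degree computation (using $\varphi c=c^p\varphi$ together with invertibility of $A$) shows that $Mv=0$ has no nonzero solution over the skew field of fractions of $K[\varphi]$, so every $e_i$ is nonzero, hence onto because $K$ is a divisible $K[\varphi]$-module. Therefore $\wp(E)=E$, as required.

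The genuine content is the third paragraph: the fact that $p$-closedness of $K$ forces Artin--Schreier closedness, indeed full $p$-closedness, of every finite prime-to-$p$ extension of $K$. The delicate point there is the noncommutative linear algebra over $K[\varphi]$ — the diagonal reduction of $M$ and, above all, the nonvanishing of the diagonal entries. Equivalently one can argue cohomologically: from the exact sequence $0\to V\to K\sep\to K\sep\to 0$ (surjection ${\cal A}_V$, kernel $V$) and the additive form of Hilbert's Theorem~90 one gets $\mbox{\rm coker}({\cal A}_V\colon K\to K)\cong H^1(G_K,V)$ for the absolute Galois group $G_K$, so a perfect $K$ is $p$-closed exactly when $H^1(G_K,V)=0$ for all finite $G_K$-stable $\F_p$-subspaces $V\subset K\sep$; realizing the permutation modules $\mbox{\rm Ind}_{G_L}^{G_K}\F_p$ inside $K\sep$ via normal-basis elements of Galois closures and applying Shapiro's lemma then identifies this with ``no open subgroup of $G_K$ surjects onto $\Z/p\Z$'', which is precisely the absence of finite extensions of degree divisible by $p$. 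Either way, I expect the bookkeeping in that one step — the matrix reduction, or the module realization plus Shapiro — to be the only real obstacle; everything else is formal.
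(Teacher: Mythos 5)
Your proof is correct, but it takes a genuinely different route from the paper's. For ``$\Leftarrow$'' the paper (following Kaplansky) argues entirely inside the splitting field: it picks an irreducible factor $h$ of the $p$-polynomial $f={\cal A}+c$, of degree $d$ prime to $p$ by hypothesis, notes that the roots of $h$ are $b+a_{i_1},\ldots,b+a_{i_d}$ with the $a_{i_j}$ lying in the group $\ker{\cal A}$, and that their sum $db+s$ lies in $K$; dividing by the unit $d$ and using that $d^{-1}s\in\ker{\cal A}$ gives the $K$-rational root $d^{-1}(db+s)=b+d^{-1}s$ in one stroke. This is a ``bare-hands'' averaging that your cohomological step ($H^1$ of a prime-to-$p$ group with $\Fp$-coefficients vanishes) repackages at a higher level; you pay the extra price of first factoring ${\cal A}$ over $K_1=K(V)$ into linear additive pieces $X^p-\lambda_iX$ and reducing each to $\wp$, whereas the paper needs no factorization at all. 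For ``$\Rightarrow$'' the divergence is greater. The paper's argument (attributed to David Leep) is strikingly concrete: given a Galois extension $L|K$ of degree $n$, a normal basis $b_1,\ldots,b_n$ gives a $p$-polynomial $f$ of degree $\leq p^{n-1}$ with the $b_i$ as roots and with $\ker(f-e)$ the group generated by $b_i-b_1$; a root $\vartheta\in K$ guaranteed by $p$-closedness then has trace $n\vartheta=\mbox{\rm Tr}(b_1)\neq 0$, forcing $p\nmid n$. You instead reduce via a Sylow argument to proving $\wp(E)=E$ for every finite $E|K$, encode $\wp$ as the matrix $A^{\!T}\varphi-I$ over $K[\varphi]$, diagonalize by Smith normal form using that $K[\varphi]$ is two-sided Euclidean for perfect $K$ (Theorem~\ref{Kphi}), and check full rank by the leading-coefficient argument. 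This is substantially more machinery (normal basis theorem is used by both, but you also need the noncommutative Smith form, the Ore localization, and either the explicit rank computation or Shapiro's lemma), yet it buys something conceptual: it makes the theorem a statement about divisibility of $K^m$ as a $K[\varphi]$-module and ties it directly to the $K[\varphi]$-module perspective on valued fields that the rest of the paper develops. In short, both proofs are sound; the paper's is shorter and more elementary, yours is more structural and more obviously generalizable.
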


This theorem lets us understand hypothesis A much better. Based on this
insight, Kaplansky's result about uniqueness of maximal immediate
extensions is reproved in [Ku--Pa--Ro]. There, it is deduced from the
Schur--Zassenhaus Theorem about conjugacy of complements in profinite
groups, via Galois correspondence.

As we are shifting our focus to additive polynomials, the original
condition ``$p$-closed'' regains its independent interest. In
Section~\ref{sectp-cl} we will use Theorem~\ref{Whaples} to prove:

\begin{theorem}                             \label{amp-cl=Kap}
A henselian valued field of characteristic $p>0$ is $p$-closed if and
only if it is an algebraically maximal Kaplansky field.
\end{theorem}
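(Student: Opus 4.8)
The plan is to reduce everything to Theorem~\ref{Whaples}, which says that a field of characteristic $p$ is $p$-closed exactly when it has no finite extension of degree divisible by $p$. So, with $(K,v)$ henselian and $\chara K=p>0$ (hence $\chara Kv=p$), I must show that $K$ has no finite extension of degree divisible by $p$ if and only if $(K,v)$ is algebraically maximal, $vK$ is $p$-divisible, and $Kv$ is $p$-closed. Throughout I use the basic ramification theory of henselian fields: for a finite extension $L|K$ one has $[L:K]={\rm d}(L|K)\,{\rm e}(L|K)\,{\rm f}(L|K)$ with ${\rm e}(L|K)=(vL:vK)$ and ${\rm f}(L|K)=[Lv:Kv]$, the defect ${\rm d}(L|K)$ a power of $p$ (Ostrowski's Lemma, using $\chara Kv=p$) and multiplicative along towers $K\subseteq M\subseteq L$; and $(K,v)$ is algebraically maximal iff it admits no proper finite immediate extension.

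For the direction "$p$-closed $\Rightarrow$ algebraically maximal, $vK$ $p$-divisible, $Kv$ $p$-closed", assume $K$ is $p$-closed, so by Theorem~\ref{Whaples} every finite extension of $K$ has degree prime to $p$. Then $K$ is perfect (otherwise $X^p-a$ with $a\in K\setminus K^p$ is irreducible of degree $p$), hence $Kv$ is perfect and $vK$ is $p$-divisible, since $a^{1/p}\in K$ has value $\frac1p va$. Algebraic maximality is then clear: a proper finite immediate extension $L|K$ would satisfy ${\rm e}(L|K)={\rm f}(L|K)=1$, forcing $[L:K]={\rm d}(L|K)$, a power of $p$ greater than $1$ — impossible. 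Finally $Kv$ is $p$-closed: a finite extension of $Kv$ of degree divisible by $p$ is separable ($Kv$ being perfect), so by the theory of unramified extensions of the henselian field $(K,v)$ — lift the minimal polynomial of a primitive element to a monic polynomial over the valuation ring; it stays irreducible, and the fundamental inequality~(\ref{fundineq}) forces its residue degree to equal its degree — it would lift to an extension $K'|K$ with $[K':K]=[K'v:Kv]$ divisible by $p$, contradicting the first sentence of this paragraph.

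For the converse, start from an algebraically maximal Kaplansky field $(K,v)$. First, $K$ is perfect: for $a\in K^{1/p}$ the extension $K(a)|K$ has ${\rm e}=1$ (as $vK$ is $p$-divisible and value groups are torsion free) and ${\rm f}=1$ (as $Kv$ is $p$-closed, hence perfect), so it is immediate, hence trivial by algebraic maximality. Second, for every finite $L|K$ both ${\rm e}(L|K)$ and ${\rm f}(L|K)=[Lv:Kv]$ are prime to $p$ — the former because $p$-divisibility of $vK$ together with torsion-freeness forbids an element of order $p$ in $vL/vK$, the latter by Theorem~\ref{Whaples} applied to the $p$-closed field $Kv$. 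The decisive step is then to show $(K,v)$ is defectless, and for this I would use hypothesis~A through Kaplansky's theorem on the uniqueness of maximal immediate extensions: pass to a maximal immediate extension $\widetilde K$ of $K$, which is maximal and therefore defectless; for a finite Galois extension $L|K$, algebraic maximality forces $\widetilde K\cap L=K$ (an algebraic subextension of $\widetilde K|K$ would be immediate over $K$, hence trivial), so $[\widetilde KL:\widetilde K]=[L:K]$, while an immediate extension enlarges neither the value group nor the residue field under base change, so ${\rm e}(\widetilde KL|\widetilde K)={\rm e}(L|K)$ and ${\rm f}(\widetilde KL|\widetilde K)={\rm f}(L|K)$; since $\widetilde K$ is defectless this yields $[L:K]=[\widetilde KL:\widetilde K]={\rm e}(L|K)\,{\rm f}(L|K)$, i.e.\ ${\rm d}(L|K)=1$, and the case of an arbitrary finite extension follows by passing to its Galois closure (legitimate since $K$ is perfect). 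With $(K,v)$ now defectless, every finite $L|K$ has $[L:K]={\rm e}(L|K)\,{\rm f}(L|K)$ prime to $p$, so $K$ has no finite extension of degree divisible by $p$ and is $p$-closed by Theorem~\ref{Whaples}.

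The main obstacle is the defectlessness step in the converse. Everything else is bookkeeping with the fundamental equality and Theorem~\ref{Whaples}, but deducing "defectless" from "algebraically maximal" is false in general, so hypothesis~A must genuinely enter here; that it is not redundant is shown already by $\F_p((t))$, which is algebraically maximal (indeed defectless) yet not $p$-closed, since it is not perfect and its value group is not $p$-divisible. The cleanest route I see goes through Kaplansky's uniqueness theorem together with two supporting facts — that maximal valued fields are defectless, and that immediate base change changes neither the value group nor the residue field — and assembling these correctly is where the care lies.
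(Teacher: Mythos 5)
Your forward direction is essentially the paper's argument, only slightly more careful: you first derive that $K$ is perfect (hence so is $Kv$), which the paper leaves implicit before lifting residue extensions, and the rest is the same. That part is fine.

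The converse has a genuine gap, and it sits at exactly the step you yourself single out as the main obstacle. You assert that, because $\widetilde K|K$ is immediate, base change to $\widetilde K$ preserves ramification and inertia: ${\rm e}(\widetilde KL|\widetilde K)={\rm e}(L|K)$ and ${\rm f}(\widetilde KL|\widetilde K)={\rm f}(L|K)$. But nothing forces this. What is automatic are only the \emph{reverse} inequalities: from $v\widetilde K=vK\subseteq vL\subseteq v(\widetilde KL)$ one gets ${\rm e}(\widetilde KL|\widetilde K)=(v(\widetilde KL):vK)\geq(vL:vK)={\rm e}(L|K)$, and likewise ${\rm f}(\widetilde KL|\widetilde K)\geq{\rm f}(L|K)$. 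Feeding these into $[L:K]=[\widetilde KL:\widetilde K]={\rm e}(\widetilde KL|\widetilde K)\,{\rm f}(\widetilde KL|\widetilde K)$ merely recovers the fundamental inequality for $L|K$ and gives no equality. Worse, your equality claim is \emph{equivalent} to the defectlessness you are trying to prove: if $L|K$ had defect, then ${\rm e}(L|K)\,{\rm f}(L|K)<[L:K]=[\widetilde KL:\widetilde K]={\rm e}(\widetilde KL|\widetilde K)\,{\rm f}(\widetilde KL|\widetilde K)$, so at least one of ${\rm e},{\rm f}$ must strictly increase under base change; so the argument is circular. Note also that, although you correctly sense hypothesis~A must enter the defectlessness step, your computation never actually invokes Kaplansky's uniqueness theorem --- it uses only the existence of a maximal immediate extension (Krull) and the defectlessness of maximal fields, both of which hold without hypothesis~A. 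As written, the argument would ``prove'' that every algebraically maximal field is defectless, which is not how the implication is obtained.

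The paper closes this gap by a different route, via Theorem~\ref{pank}: $K^r$ admits a field complement $W$ in $\tilde K$ with $vW/vK$ a $p$-group and $Wv|Kv$ purely inseparable; the Kaplansky conditions ($vK$ $p$-divisible, $Kv$ perfect) then force $vW=vK$ and $Wv=Kv$, so $W|K$ is immediate, algebraic maximality gives $W=K$, hence $K^r=\tilde K$ and every finite extension, lying in $K^r|K$, is defectless. This is where hypothesis~A really enters, cleanly and non-circularly. An alternative repair compatible with what you already established: since you showed $K$ is perfect, $K$ is trivially inseparably defectless, and algebraic maximality gives separable-algebraic maximality; by the theorem quoted at the end of Section~\ref{sectepap} (a henselian valued field of positive characteristic is defectless if and only if it is separable-algebraically maximal and inseparably defectless), $K$ is defectless. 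Either way, once defectlessness is in hand, your concluding ``bookkeeping'' with the fundamental equality and Theorem~\ref{Whaples} is correct.
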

\n
For a generalization of the notion ``$p$-closed'' and of this theorem
to fields of characteristic $0$ see [V], in particular Corollary~5.

\pars
By Theorem~\ref{Whaples} we can split condition (A2) into two distinct
conditions:\sn
{\bf (A2.1)} \ the residue field $Kv$ is perfect, and\n
{\bf (A2.2)} \ the residue field $Kv$ does not admit any finite
separable extension of degree divisible by $p$.
\sn
While (A2.1) is a perfectly natural condition about fields, (A2.2) is
somewhat unusual. This is the reason for the fact that Kaplansky fields
are not often found in applications. Certainly also the other conditions
restrict the possible applications (for example, $\F_p((t))$ doesn't
satisfy (A1)). But for instance, every perfect valued field of
characteristic $p>0$ satisfies conditions (A1) and (A2.1) (but not
necessarily (A2.2)). So we would obtain a more natural condition if we
could drop condition (A2.2). To obtain good model theoretic properties
for fields satisfying (A1) and (A2.1), one has to require again that
they are algebraically maximal. Such fields form a part of an important
larger class of valued fields, the tame fields. A \bfind{tame field} is
a henselian field whose algebraic closure is equal to the ramification
field $K^r$ of the normal extension $K\sep|K$, where $K\sep$ denotes the
separable-algebraic closure of $K$. The \bfind{ramification field} of a
normal separable-algebraic extension of valued fields is the fixed
field in that extension of a certain subgroup of the Galois group, the
\bfind{ramification group}. We don't need the definition of this group
here; we only need the basic properties of the field $K^r$ which I will
put together in Theorem~\ref{ram} below. By part e) of this theorem,
every tame field is defectless, and it follows directly from the
definition that every tame field is perfect. In [Ku1] (cf.\ also [Ku11])
I proved:

\begin{theorem}                             \label{mttame}
The elementary theory of a tame field is completely determined by the
elementary theory of its value group and the elementary theory of its
residue field.
\end{theorem}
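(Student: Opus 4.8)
The plan is to prove this Ax--Kochen--Ershov principle by the usual two-step strategy: first reduce it, by general model theory, to a statement about embeddings of tame fields into sufficiently saturated tame fields, and then prove that embedding statement by a transfinite construction that mirrors, step by step, an arbitrary field extension inside the saturated target. Throughout one uses that tame fields form an elementary class — henselianity is expressed by the (first-order) Hensel scheme, and in residue characteristic $p>0$ tameness amounts to henselianity together with $p$-divisibility of the value group, perfectness of the residue field and defectlessness — so that one may pass freely to highly saturated elementary extensions without leaving the class.

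For the reduction, fix complete theories $T_1$ of ordered abelian groups and $T_2$ of fields and let $T$ be the theory of tame fields $(K,v)$ with $vK\models T_1$ and $Kv\models T_2$; the claim is that $T$ is complete. Given two models, I would pass to elementarily equivalent $\kappa$-saturated tame fields of the same cardinality $\kappa$ (via special or saturated models, using that tameness is preserved); their value groups are then isomorphic, say to $\Gamma$, and their residue fields isomorphic, say to $k$. So it suffices to show that any two $\kappa$-saturated tame fields $(K,v)$ and $(L,v)$ of cardinality $\kappa$ with $vK\isom vL$ and $Kv\isom Lv$ are isomorphic. This is a back-and-forth through the set of partial valued-field isomorphisms $f\colon(A,v)\to(B,v)$ with $A\subseteq K$, $B\subseteq L$, $|A|<\kappa$, compatible with the fixed isomorphisms on value groups and residue fields. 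The entire content is the extension step: given such an $f$ and some $a\in K$, extend $f$ to a partial isomorphism defined on a valued subfield containing $a$ (and symmetrically for $b\in L$).

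For the extension step one first passes to henselizations, which embed uniquely over their bases into the henselian field $L$, and then decomposes $A^h(a)|A^h$ into a \emph{valuation-transcendental} part, in which the value group grows (picking up $va$) or the residue field grows (picking up the residue of a suitable unit), followed by an \emph{immediate} part. The valuation-transcendental part is mirrored inside $L$ directly: to grow the value group one chooses an element of $L$ of the prescribed value, available since $vL\isom vK$ and $f$ respects it; to grow the residue field one chooses a unit of $L$ with the prescribed residue behaviour over $Bv$, available since $Lv\isom Kv$ and $f$ respects it; and henselization is carried along. The immediate part is handled by Kaplansky's theory: $a$ is the limit of a pseudo-Cauchy sequence $(a_\nu)$ in the subfield built so far, without a pseudo-limit there, and this sequence is of transcendental or of algebraic type. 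In the transcendental-type case, $\kappa$-saturation of $L$ produces a pseudo-limit $b$ of $(f(a_\nu))$ — each finite fragment of the defining partial type is witnessed by some $f(a_\mu)$ — and Kaplansky's lemma identifies $A^h(b)|A^h$ with $A^h(a)|A^h$ as an immediate transcendental extension.

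The main obstacle is the \emph{immediate algebraic} case: $A^h(a)|A^h$ is immediate and $a$ is algebraic over $A^h$, and one must find inside $L$ a root of the image of the minimal polynomial of $a$ that again generates an immediate isomorphic copy. A pseudo-limit produced by raw saturation need not be such a root, and the obstruction is exactly a \emph{defect}; this is where tameness genuinely enters and where the additive-polynomial machinery of this paper is essential. In residue characteristic $p>0$ I would reduce, via the results on minimal purely wild extensions, to the case where $a$ is a root of an Artin--Schreier polynomial $X^p-X-c$ — or, more generally, of a $p$-polynomial — over $A^h$; since $(K,v)$ is tame and hence defectless, the extension $A^h(a)|A^h$ carries no defect, which constrains $vc$ and the associated residue data enough that the corresponding pseudo-Cauchy sequence over $A^h$ has, by $\kappa$-saturation together with the henselianity and defectlessness of the tame field $L$, a pseudo-limit in $L$ that is simultaneously a root of $X^p-X-f(c)$; Kaplansky's lemma then yields the immediate isomorphic copy. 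In residue characteristic $0$ (and for the tamely ramified part in general) this case is vacuous, since a henselian field of residue characteristic $0$ is defectless and any nontrivial immediate algebraic extension would be a defect extension; there the argument collapses to the classical one. Running the back-and-forth through all of $K$ and all of $L$ then produces an isomorphism $(K,v)\isom(L,v)$, and hence the completeness of $T$.
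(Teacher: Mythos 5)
Your overall skeleton is reasonable and points in roughly the right direction — reduce to an embedding statement into a saturated target, decompose the extension into a part where value group or residue field grows, a part which is immediate and transcendental, and a residual immediate algebraic part, and use Kaplansky's theory for the transcendental-type pseudo-Cauchy sequences. But your treatment of the immediate algebraic case, which is the whole crux of the theorem, rests on a false step, and the two deep results that the paper actually uses to dispose of exactly this case are absent from your argument.

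The false step is the assertion that ``since $(K,v)$ is tame and hence defectless, the extension $A^h(a)|A^h$ carries no defect.'' Defectlessness of $K$ does not pass to subfields: $A^h$ is merely the henselization of some subfield of $K$ built up during the back-and-forth, and there is no reason for $A^h$ to be a defectless, let alone a tame, field. In fact, if $A^h(a)|A^h$ is a proper finite immediate extension then by the Lemma of Ostrowski its defect equals its degree and is automatically non-trivial — so either your claim is vacuous ($A^h(a)=A^h$) or it is simply wrong. This is precisely where the paper says we ``do not know how to lift the extension further'': a defect extension of $A^h$ sitting between $A^h$ and $L$ cannot be matched by raw saturation, and tame fields lack the uniqueness of maximal immediate extensions that would bail one out in the Kaplansky-field setting.

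The paper's strategy is therefore not to argue the defect away but to \emph{avoid it entirely} by controlling which generators appear. Two theorems do the work, and you use neither. First, the Generalized Stability Theorem (Theorem~\ref{ai}): a valued function field without transcendence defect over a defectless field is itself defectless; this ensures that once the ``value-transcendental'' and ``residue-transcendental'' generators have been adjoined, the remaining finite immediate algebraic extension is in fact trivial. Second, Henselian Rationality (Theorem~\ref{HR}): for a tame base, an immediate function field of transcendence degree one becomes, after henselization, generated by a \emph{single} element $x$ with $L^h=K(x)^h$; since a tame field is algebraically maximal, the pseudo-Cauchy sequence over $K$ determining such an $x$ is necessarily of transcendental type, so saturation plus Kaplansky's lemma applies and the defect never materializes. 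To even be in a position to invoke Henselian Rationality, one also needs the lemma (stated in Section~\ref{sectepap}) that a relatively algebraically closed subfield of a tame field, over which the tame field is immediate, is again tame — this is what keeps the intermediate base fields tame when reducing to transcendence degree one. The theorem on minimal purely wild extensions that you cite (Theorem~\ref{mpwe}) is indeed a genuine ingredient, but only indirectly: it feeds into the proof of Henselian Rationality by pinning down the minimal polynomials of defect extensions, it does not by itself make your ``$A^h(a)|A^h$ has no defect'' argument work.

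A secondary structural issue is that your one-element-at-a-time back-and-forth does not keep the partial substructures under enough control to apply these theorems; the paper works instead with whole valued function fields over the tame base, which is what allows the Generalized Stability Theorem and Henselian Rationality to be formulated and used. So while your decomposition and your treatment of the valuation-transcendental and transcendental-immediate cases are correct in spirit, the immediate algebraic case — the only genuinely hard case — is left with a gap that invalidates the proof as written.
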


All tame fields satisfy conditions (A1) and (A2.1), but not necessarily
(A2.2). That means that we have lost the uniqueness of maximal immediate
extensions. But the above result shows that uniqueness is not necessary
for an elementary class of valued fields to have good model theoretic
properties. However, we have to work much harder. This work is again
directly related to additive polynomials, and we will describe this
connection now.

%
%
\subsection{Reason \#2: the defect and purely wild extensions}
Let us assume that $(K,v)$ is henselian. Then for every finite
extension $L$ of $K$, we have $g=1$ in the fundamental inequality
(\ref{fundineq}). Then the Lemma of Ostrowski (cf.\ [Ri], Th\'eor\`eme
2, p.\ 236) tells us that we have an equality
\begin{equation}                            \label{fundeq}
[L:K]\;=\;(vL:vK)\cdot [Lv:Kv]\cdot p^\delta\;,
\end{equation}
where $p$ is the characteristic exponent of the residue field $Kv$, and
$\delta$ is a non-negative integer. The factor $p^\delta$ is called the
\bfind{defect} of the extension $(L|K,v)$; it is \bfind{trivial} if
$p^\delta=1$. Consequently, every valued field with residue field of
characteristic $0$ is defectless.

It follows from Theorem~\ref{ram} that a valued field is tame if it is
henselian and for every finite extension $L|K$,
\sn
$\bullet$ \ the characteristic of $Kv$ does not divide $(vL:vK)$,
\sn
$\bullet$ \ the extension $Lv|Kv$ is separable, and
\sn
$\bullet$ \ the extension $(L|K,v)$ is defectless.
\sn
The ramification field $K^r$ is the unique maximal tame extension of
every henselian field $(K,v)$.

\pars
As I have explained in [Ku3], the presence of non-trivial defect is one
of the main obstacles in proving an Ax--Kochen--Ershov principle like
(\ref{AKEec}). Let me quickly sketch this again. Assume that $(L,v)$ is
an extension of a henselian field $(K,v)$ such that $vK\ec vL$ and
$Kv\ec Lv$. Then we take $(K^*,v^*)$ to be an $|L|^+$-saturated
elementary extension of $(K,v)$. It follows that $v^*K^*$ is a
$|vL|^+$-saturated extension of $vK$; hence $vK\ec vL$ yields that $vL$
can be embedded over $vK$ in $v^*K^*$. It also follows that $K^*v^*$ is
an $|Lv|^+$-saturated extension of $vL$; hence $Kv\ec Lv$ yields that
$Lv$ can be embedded over $Kv$ in $K^*v^*$. Now we have to lift these
embeddings to an embedding of $(L,v)$ in $(K^*,v^*)$ over $K$. Once this
is achieved, we are done, because every existential elementary sentence
which holds in $(L,v)$ carries over to its image in $(K^*,v^*)$, from
there up to $(K^*,v^*)$, and from there down to the elementary
substructure $(K,v)$.

By a general model theoretic argument, the situation can be reduced to
the case where $L$ is finitely generated over $K$. That is, $(L|K,v)$ is
a valued function field (by ``function field'', we will always mean
``algebraic function field''). Hence, we need the structure theory of
valued function fields to solve our embedding problem (as it is the case
for the problem of local uniformization). Let us assume that we can
reduce to the case where the transcendence degree of $L|K$ is 1. This
can be done for tame fields, but for the model theory of $\F_p((t))$,
this is another serious problem, again connected with additive
polynomials (see Section~\ref{sectepap}). Assume further that we have
lifted the embeddings of $vL$ and $Lv$ to an embedding of some subfield
$L_0$ of $L$. Then $L|L_0$ is a finite immediate extension, and in
general, it will be proper (i.e., $L\ne L_0$). Taking henselizations, we
obtain that also $L^h|L_0^h$ is a finite immediate extension. Since we
assumed that $(K,v)$ is henselian (which is true for every
algebraically maximal and every tame field), its elementary
extension $(K^*,v^*)$ is also henselian. Therefore, the embedding of
$L_0$ extends to an embedding of $L_0^h$ in $K^*$ (this is the universal
property of the henselization). But if $L^h\ne L_0^h$, we do not know
how to lift the extension further (which we would need to get all of
$L$ embedded), unless we have uniqueness of maximal immediate
extensions. Since $L^h|L_0^h$ is immediate, we have $(vL^h: vL_0^h)=1$
and $[L^hv:L_0^hv]=1$; hence if $L^h\ne L_0^h$, then by (\ref{fundeq}),
the extension has non-trivial defect, equal to its degree.

We see that indeed, the presence of non-trivial defect constitutes a
serious obstacle for our embedding problem. So we have to avoid the
defect. In certain cases, it will not even appear. All tame fields are
defectless fields (and so are all other valued fields for which we know
good model theoretic results). This does not mean that every valued
function field over a tame field is defectless. But for a certain type
of valued function fields, this is true.
Let $(L|K,v)$ be an extension of valued fields of finite transcendence
degree. Then the following inequality holds (cf.\ [B], Chapter VI,
\S10.3, Theorem~1):

\begin{equation}                            \label{wtdgeq}
\trdeg L|K \>\geq\> \dim_\Q (\Q\otimes (vL/vK) ) \,+\, \trdeg Lv|Kv\;.
\end{equation}
If equality holds then we will say that \bfind{$(L|K,v)$ is
without transcendence defect}. For such function fields, we have
([Ku1], [Ku9]):

\begin{theorem}                           \label{ai}
{\bf (Generalized Stability Theorem)} \
Let $(L|K,v)$ be a valued function field without transcendence defect.
If $(K,v)$ is a defectless field, then also $(L,v)$ is a defectless
field.
\end{theorem}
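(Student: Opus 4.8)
The plan is to reduce the general case to the two basic cases of a valued function field in one variable: the \emph{residue-transcendental} case, where $\trdeg L|K = \trdeg Lv|Kv = 1$ and the value group does not grow rationally, and the \emph{value-transcendental} case, where $\trdeg L|K = 1$, $Lv|Kv$ is algebraic, and $vL/vK$ has rational rank $1$. Since $(L|K,v)$ is without transcendence defect, iterating the inequality (\ref{wtdgeq}) over a suitable tower lets us pick a transcendence basis $x_1,\dots,x_n$ of $L|K$ such that at each step of the tower $K(x_1,\dots,x_i)|K(x_1,\dots,x_{i-1})$ the added generator is either residue-transcendental or value-transcendental, and $L$ is finite over $K(x_1,\dots,x_n)$. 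Because the property of being a defectless field is preserved under finite extensions (this is where one invokes henselizations and the multiplicativity of the defect via Ostrowski's Lemma (\ref{fundeq}), together with the hypothesis that $(K,v)$ is defectless), it suffices to treat the two one-variable transcendental steps and then handle the final finite extension.

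For the transcendental steps, the key point is the classical \emph{stability theorem for function fields with a single transcendental generator}: if $(K,v)$ is defectless (or even just henselian defectless) and $x$ is residue-transcendental or value-transcendental over $K$, then the valued field $(K(x),v)$ is again defectless. The standard argument here goes through Gauss-type valuations: for a residue-transcendental generator one may normalize so that $vx=0$ and $xv$ is transcendental over $Kv$, in which case $v$ on $K(x)$ is the Gauss valuation, $vK(x)=vK$, $K(x)v = Kv(xv)$, and one checks directly that any finite extension $E|K(x)$ satisfies the fundamental equality by approximating the extension by one defined over a finite subextension of $Kv$ inside $K(x)v$ and using the defectlessness of $K$. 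The value-transcendental case is handled symmetrically, with $vx$ generating a rational rank $1$ quotient and $K(x)v = Kv$; here $K(x)$ behaves like $K$ with a new independent value appended, and again finite extensions inherit the equality from those of $K$.

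The main obstacle is the last step: after the transcendental tower we are left with a \emph{finite} extension $L|K(x_1,\dots,x_n)$, and we need that a finite extension of a defectless field is defectless. In general this is false without further hypotheses, so one must use that the extension sits inside a function field without transcendence defect and that henselization does not change the defect. Concretely, after henselizing we may assume everything is henselian; then $[L:K(\underline x)] = (vL:vK(\underline x))\cdot[Lv:K(\underline x)v]\cdot p^\delta$, and one must show $\delta=0$. The way to force this is to observe that passing to the maximal tame subextension absorbs the tame part of the degree, reducing to a purely wild finite extension, and then to quote (or reprove in this setting) that a purely wild finite extension of a henselian defectless field whose value group and residue field already carry all the ``expected'' growth must be trivial — i.e.\ the defect cannot be created out of nothing when the transcendence defect is zero. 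Packaging this carefully, and verifying that the reduction to transcendence degree one via (\ref{wtdgeq}) can indeed be arranged so that no transcendence defect is hidden at intermediate stages, is the technical heart of the proof; I would expect the bulk of the work, and the appeal to the structure theory of the ramification field $K^r$ from Theorem~\ref{ram}, to be concentrated there.
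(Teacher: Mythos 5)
Your proposal misplaces the difficulty. The worry you raise about the final finite extension $L|K(x_1,\dots,x_n)$ is unfounded: a finite extension of a defectless field is \emph{always} defectless, by first passing to henselizations (which preserve defectlessness, cf.\ [Ku9], [En]) and then using the multiplicativity of the defect in equation (\ref{fundeq}): if $M|L^h|K^h$ with $K^h$ henselian defectless, then $[M:L^h]=[M:K^h]/[L^h:K^h]=(vM:vL^h)[Mv:L^hv]$, so the defect cancels. There is nothing to patch there. The genuine work sits exactly where you wave it away, namely in the claim that $(K(x),v)$ is defectless when $x$ is residue-transcendental or value-transcendental over a defectless $(K,v)$. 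Calling this ``the classical stability theorem'' and saying ``one checks directly'' by ``approximating the extension by one defined over a finite subextension of $Kv$'' does not produce a proof. The classical stability results (Grauert--Remmert, Ohm, etc.)\ carry restrictions (rank one, completeness, tame residue characteristic) that are precisely what the word ``Generalized'' in the theorem's title is removing; you cannot quote them in the generality needed.

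The paper's actual route (sketched in Section~2.2) is as follows: henselize; pass to the ramification field $L^r$, since $k_1|k$ and $k^r.k_1|k^r$ have the same defect (Theorem~\ref{ram}(f)); observe that $L^{\mathrm{sep}}|L^r$ is a $p$-extension, hence every finite separable extension of $L^r$ is a tower of degree-$p$ Galois extensions, which in characteristic $p$ are Artin--Schreier extensions (Theorem~\ref{GalpAS}); then, after reducing to $K$ algebraically closed and transcendence degree one, split into the value-transcendental and residue-transcendental cases and show by a normal-form argument on the Artin--Schreier generator $a$ (replacing $a$ by $a-(c^p-c)$) that the extension already exhausts its degree in value-group index or residue-field degree. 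In the residue-transcendental case this normal-form argument is not elementary at all: it requires the existence of Frobenius-closed bases of the residue function field $Lv|Kv$ (Theorem~\ref{Fc} and Lemma~\ref{Frpp}), which is why a substantial portion of the paper is devoted to Riemann--Roch and the construction of such bases. Your proposal contains none of this machinery, and the ``Gauss valuation'' heuristic does not substitute for it; it is exactly the step where wild ramification and defect could in principle appear, and ruling that out is the content of the theorem, not an input to it.
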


Using this theorem, one can prove (cf.\ [Ku9]):
\begin{theorem}
Let $(K,v)$ be a henselian defectless field. Then the Ax--Kochen--Ershov
principle (\ref{AKEec}) holds for every extension $(L,v)$ of $(K,v)$
without transcendence defect.
\end{theorem}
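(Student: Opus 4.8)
The plan is to carry out the embedding strategy sketched just before the statement. Since we must verify the existential-closedness version~(\ref{AKEec}), it suffices, after replacing $(K,v)$ by a suitable saturated elementary extension, to construct an embedding of $(L,v)$ into that extension over $K$ which induces prescribed embeddings of $vL$ and $Lv$. First I would reduce, by a routine compactness/direct-limit argument, to the case that $L|K$ is finitely generated, so that $(L|K,v)$ is a valued function field; any finitely generated subextension of an extension without transcendence defect is again without transcendence defect. Let $(K^*,v^*)$ be an $|L|^+$-saturated elementary extension of $(K,v)$. Then $v^*K^*$ is an $|vL|^+$-saturated elementary extension of $vK$, so $vK\ec vL$ gives an embedding $\iota\colon vL\hookrightarrow v^*K^*$ over $vK$, and $K^*v^*$ is an $|Lv|^+$-saturated elementary extension of $Kv$, so $Kv\ec Lv$ gives an embedding $\bar\iota\colon Lv\hookrightarrow K^*v^*$ over $Kv$. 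Once $(\iota,\bar\iota)$ is lifted to an embedding $\lambda\colon(L,v)\hookrightarrow(K^*,v^*)$ over $K$, every existential ${\cal L}(t)$-sentence true in $(L,v)$ holds in $(K^*,v^*)$ and hence in its elementary substructure $(K,v)$, which is precisely $(K,v)\ec(L,v)$.

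To build $\lambda$, I would first treat the Abhyankar part. Using that $(L|K,v)$ is without transcendence defect, choose a transcendence basis of $L|K$ consisting of elements $x_1,\dots,x_\rho$ whose values are rationally independent over $vK$ together with elements $y_1,\dots,y_\tau$ whose residues are algebraically independent over $Kv$, where $\rho=\dim_\Q(\Q\otimes(vL/vK))$ and $\tau=\trdeg Lv|Kv$. On $K_0:=K(x_1,\dots,x_\rho,y_1,\dots,y_\tau)$ the valuation is the obvious one, with $vK_0=vK\oplus\bigoplus_i\Z vx_i$ and $K_0v=Kv(\bar y_1,\dots,\bar y_\tau)$; choosing $x_i^*\in K^*$ with $v^*x_i^*=\iota(vx_i)$ and $y_j^*$ in the valuation ring of $v^*$ with residue $\bar\iota(\bar y_j)$ (possible since $v^*$ and its residue map are onto) gives a $K$-embedding $K_0\hookrightarrow K^*$ compatible with $\iota$ and $\bar\iota$, which extends to $K_0^h\hookrightarrow K^*$ since $(K^*,v^*)$ is henselian. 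Because $\trdeg L|K=\rho+\tau=\trdeg K_0|K$, the extension $L|K_0$ is finite; in particular the caveat about reducing to transcendence degree $1$ does not arise here.

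Next I would enlarge $K_0$ inside $L$ by finitely many elements realizing generators of the finite group $vL/vK_0$ and of the finite residue extension $Lv|K_0v$, obtaining a subfield $L_0$ with $K\subseteq K_0\subseteq L_0\subseteq L$, $L_0|K$ finitely generated, $vL_0=vL$ and $L_0v=Lv$. Since $vL_0=vL$ and $L_0v=Lv$, the extension $(L_0|K,v)$ is still without transcendence defect, so by the Generalized Stability Theorem~\ref{ai} the fields $(K_0,v)$ and $(L_0,v)$, and hence their henselizations, are defectless; in particular $L_0^h|K_0^h$ is a finite defectless extension of a henselian field, and such an extension can be embedded into the saturated henselian field $(K^*,v^*)$ compatibly with $\iota$ and $\bar\iota$, extending the embedding of $K_0^h$. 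It remains to close the gap between $L_0$ and $L$: since $vL_0=vL$ and $L_0v=Lv$, the extension $L|L_0$ is finite and immediate, hence so is $L^h|L_0^h$; by Theorem~\ref{ai} again $(L,v)$ is a defectless field, so $L^h$ is defectless, the immediate extension $L^h|L_0^h$ has trivial defect, and Ostrowski's equation~(\ref{fundeq}) forces $[L^h:L_0^h]=(vL^h:vL_0^h)\cdot[L^hv:L_0^hv]\cdot p^\delta=1$, i.e.\ $L^h=L_0^h$. Thus $L\subseteq L^h=L_0^h$ embeds into $K^*$ over $K$, inducing $\iota$ on $vL$ and $\bar\iota$ on $Lv$; this is the required $\lambda$.

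The routine parts are the reduction to a finitely generated $L$ and the final transfer of existential sentences. The main obstacle is the embedding step for the algebraic (``wild'') part: extending the embedding of $K_0^h$ to an embedding of $L_0^h$ into the saturated henselian field $(K^*,v^*)$ while staying compatible with the pre-selected $\iota\colon vL\hookrightarrow v^*K^*$ and $\bar\iota\colon Lv\hookrightarrow K^*v^*$. This is an embedding lemma for finite defectless extensions over a henselian base, and its proof must deal with possibly inseparable residue extensions and with matching up the ramification; it is exactly here, as well as in forcing $L^h=L_0^h$, that the defectlessness supplied by the Generalized Stability Theorem~\ref{ai} is indispensable --- without it the immediate extension $L^h|L_0^h$ could carry non-trivial defect and the lifting would break down.
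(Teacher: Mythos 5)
The paper does not actually prove this theorem; it states it and refers to [Ku9], while the surrounding text in Section 2.2 gives the strategic outline. Your proposal reproduces that outline faithfully: reduce to a finitely generated extension, pass to a saturated elementary extension $(K^*,v^*)$, construct an Abhyankar-type subfield $K_0$, enlarge it to a subfield $L_0\subseteq L$ capturing all of $vL$ and $Lv$, invoke the Generalized Stability Theorem (Theorem~\ref{ai}) to control the defect, and show $L^h=L_0^h$. The decomposition into the transcendental part $K_0$, the algebraic ``gap-filling'' part $L_0$, and the immediate part $L|L_0$ is exactly the right structure, and the use of Theorem~\ref{ai} to make the immediate piece collapse is correct.

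There are, however, two issues worth flagging.

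\textbf{First, the central step is not proved.} You write that $L_0^h|K_0^h$ ``is a finite defectless extension of a henselian field, and such an extension can be embedded into the saturated henselian field $(K^*,v^*)$ compatibly with $\iota$ and $\bar\iota$'' --- and you later acknowledge that this embedding lemma is ``the main obstacle.'' This lemma is precisely the technical content that the paper delegates to [Ku9], and in the present write-up it is a black box. It is genuinely nontrivial: one must split $L_0^h|K_0^h$ into the tame part inside $K_0^r$ and the purely wild part (cf.\ Theorem~\ref{pank}), realize the tame ramification and the separable residue extension inside $K^*$ so as to match $\iota$ and $\bar\iota$, and then handle the purely inseparable residue extension and wild ramification while still matching --- and here defectlessness and the saturation of $K^*$ both enter in an essential way. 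Stating it as a fact leaves the proof incomplete exactly where the work lies.

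\textbf{Second, a local logical slip in the $L^h=L_0^h$ step.} You argue: ``by Theorem~\ref{ai} again $(L,v)$ is a defectless field, so $L^h$ is defectless, the immediate extension $L^h|L_0^h$ has trivial defect.'' Defectlessness of $L^h$ controls finite extensions \emph{of} $L^h$, not extensions \emph{into} it; it does not imply anything about the defect of $L^h|L_0^h$. What you actually need is defectlessness of the lower field $L_0^h$, which you established one sentence earlier (Theorem~\ref{ai} applied to $(L_0|K,v)$). The conclusion is therefore correct, but the justification as written points to the wrong field. Also note that the Abhyankar equality for $K_0|K$ (giving $vK_0=vK\oplus\bigoplus_i\Z vx_i$ and $K_0v=Kv(\bar y_1,\dots,\bar y_\tau)$) deserves a word of justification --- it follows because inequality~(\ref{wtdgeq}) must hold with equality for $K_0|K$, which pins the valuation down --- but this is standard.

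Aside from these points, the framework you lay out is correct and matches the one the paper sketches. To complete the proof one would have to supply the embedding lemma for finite defectless extensions into a saturated henselian field with prescribed value group and residue field embeddings; that is where the real substance of [Ku9] resides.
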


I proved Theorem~\ref{ai} in [Ku1]. How does this proof work? How can we
see whether a given valued field is defectless? First of all, a valued
field is defectless if and only if its henselization is (see [Ku9]; a
partial proof is also given in [En]). So we can assume that $L$ is the
henselization of a valued function field. Second, if $(k,v)$ is any
henselian field, then every finite extension of $k$ inside the
ramification field $k^r$ has trivial defect, and if $k_1|k$ is any
finite extension, then $k_1|k$ and $k^r.k_1|k^r$ have the same defect
(cf.\ Theorem~\ref{ram}). So in our situation, we have to show that
every finite extension of $L^r$ has trivial defect. The advantage of
working over $L^r$ is that general ramification theory tells us that
$L\sep|L^r$ is a $p$-extension. A normal and separable field extension
is called a \bfind{$p$-extension} if its Galois group is a
pro-$p$-group. It follows from the general theory of $p$-groups (cf.\
[H], Chapter III, \S 7, Satz 7.2 and the following remark) via Galois
correspondence that every finite separable-algebraic extension of $L^r$
is a tower of Galois extensions of degree $p$. Hence we just have to
show by induction that each of them has trivial defect. (The
complementary case of purely inseparable extensions is much easier and
can be disposed of more directly.) Now every Galois extension $k'|k$ of
degree $p$ of fields of characteristic $p$ is an Artin-Schreier
extension; this well known fact is proved by an application of Hilbert's
Theorem 90. We include a proof in Section~\ref{sectagfe}
(Theorem~\ref{GalpAS}), as a special case of a generalization which we
will discuss below.

Let $\vartheta$ be a root of $X^p-X-a$. Then $k'=k(\vartheta)=
k(\vartheta-c)$ for every $c\in k$. As $X^p-X$ is an additive
polynomial, we have $(\vartheta-c)^p-(\vartheta-c)=a-c^p+c$, that is,
$\vartheta-c$ is a root of the $p$-polynomial $X^p-X-(a-c^p+c)$.
So we may change $a$ by subtracting elements in $k$ of the form
$c^p-c$, without changing the extension generated by the
polynomial. The idea in our above situation is now to find by this
method a suitable normal form for the element $a$ from which we can read
off that the extension has trivial defect. The idea of deducing suitable
normal forms for Artin-Schreier extensions (and for Kummer extensions in
the case of fields of characteristic $0$) can already be found in the
work of Hasse, Whaples, Epp ([Ep], see also [Ku5]), Matignon and
Abhyankar.

Let us quickly discuss two examples. We wish to show that a given
Artin-Schreier extension $L'|L^r$ has trivial defect. Before we go on,
we note that by valuation theoretical arguments, the proof of
Theorem~\ref{ai} can be reduced to the case where $K$ and hence
also its residue field $Kv$ is algebraically closed. Assume that the
transcendence degree of $L|K$ is 1. Then by (\ref{wtdgeq}) with
equality, we can have
\sn
$\bullet \; \dim_\Q (\Q\otimes (vL/vK))=1$ and $\trdeg Lv|Kv=0$, or\n
$\bullet \; \dim_\Q (\Q\otimes (vL/vK))=0$ and $\trdeg Lv|Kv=1$.
\sn
In the first case, there is an element $x\in L$ such that $vx$ is
rationally independent over $vK$.
Under certain additional conditions, we can then take $a$ to be a
polynomial in $x$ with coefficients in $K$. Since the values of the
monomials in this polynomial $a=a(x)$ lie in distinct cosets modulo
$vK$, their values are distinct. By the ultrametric triangle law, this
implies that the value of such a monomial is equal to the least value of
its monomials. Now we can use the above method to get rid of $p$-th
powers of $x$ in $a(x)$ (we can replace a monomial $cx^{kp}$ by
$c^{1/p}x^k$. Therefore, we can assume that all monomials appearing in
the polynomial $a(x)$ are of the form $c_ix^i$ with $i$ not divisible by
$p$. Then the value $va(x)$ is not divisible by $p$ in $vL^r$. This
value cannot be positive since otherwise, the extension would be trivial
by Hensel's Lemma. With the value being negative, the reader may show
that if $\vartheta$ is a root of $X^p-X-a(x)$, then
\[v\vartheta\;=\;\frac{va(x)}{p}\;.\]
This implies that $(vL':vL^r)=p=[L':L^r]$, so the extension has trivial
defect.

In the second case, we will have an element $x\in L$ of value $0$ whose
residue $xv$ is transcendental over $Kv$. Now we will have to deal with
finite sums of the form $c_id_i$ where $d_i\in L$ are representatives of
elements in the residue field. We play the same game as before, trying
to come up with a residue that has no $p$-th root, from which it would
follow in a similar way as above that $[L'v:L^rv]=p=[L':L^r]$, showing
that the extension has trivial defect. The problem here is that when we
replace some monomial $c_id_i$ by its $p$-th root $c_i^{1/p}d_i^{1/p}$,
then even if the residue $d_i^{1/p}v$ does not have a $p$-th root in
$Lv$, the element $d_i^{1/p}$ might sum up with some other $d_j$ to
an element whose residue has again a $p$-th root in $Lv$. We somehow
have to see that this process cannot go on infinitely. A good idea would
be to take the $d_i$ such that their residues form a basis of $Lv|Kv$.
But then we would need that also the residue of $d_i^{1/p}$ is in this
basis. It is easily seen that a basis being closed under taking $p$-th
roots (as long as we stay in $Lv$) is the same as a basis being closed
under taking $p$-th powers (in other words, being closed under the
Frobenius). Such a basis will be called a \bfind{Frobenius-closed
basis}. See Lemma~\ref{Frpp} which gives the exact formulation of the
property of a Frobenius-closed basis that we need in [Ku9].

The residue field of $K(x)$ is just $Kv(xv)$. Further, $L$ being a
function field of transcendence degree 1, $L|K(x)$ is a finite
extension. It follows from the fundamental inequality that also
$Lv|K(x)v$ is a finite extension. This shows that $Lv|Kv$ is a function
field of transcendence degree 1. So in order to prove our theorem in the
second case, our task is to find a Frobenius-closed basis for every
function field of transcendence degree 1 over an algebraically closed
field of positive characteristic. In [Ku1], I proved a more general
result:

\begin{theorem}                       \label{Fc}
Let $F$ be an algebraic function field of transcendence degree $1$ over
a perfect field $K$ of characteristic $p>0$. If $K$ is relatively
algebraically closed in $F$, then there exists a Frobenius-closed basis
for $F|K$.
\end{theorem}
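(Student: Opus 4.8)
\medskip\noindent
{\bf Proof proposal.}\quad
The plan is to build the basis first over the subring $A$ of elements of $F$ having poles at most at one fixed place $P_0$, by induction on the pole order at $P_0$, and then to transfer the result to all of $F$ by splitting an exact sequence of modules over $K[\varphi]$. It is convenient to view $F$ as a left $K[\varphi]$-module, $K$ acting by multiplication and $\varphi$ by the Frobenius $a\mapsto a^p$; then $A$, and the spaces of principal parts used below, are $K[\varphi]$-submodules, resp.\ quotients, of $F$. The one place where perfectness of $K$ is really used is the following: the residue field of any place of $F|K$ is a finite, hence algebraic, extension of the perfect field $K$, so it is itself perfect, and therefore the Frobenius is bijective on it and in particular carries every $K$-basis to a $K$-basis. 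Also, since $K$ is relatively algebraically closed in $F$, the field of constants of $F|K$ equals $K$, so that $L(0)=K$, where $L(D)$ denotes the usual Riemann--Roch space.

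Fix a place $P_0$ of $F|K$ and put $A=\bigcup_{n\ge 0}L(nP_0)$. I construct a Frobenius-closed $K$-basis ${\cal C}$ of $A$ recursively, producing at stage $n$ a $K$-basis ${\cal C}_n$ of $L(nP_0)$ with ${\cal C}_{n-1}\subseteq {\cal C}_n$, arranged so that whenever $c\in {\cal C}$ has pole order exactly $m$ at $P_0$ then $c^p$ (which has pole order $pm$) also lies in ${\cal C}$. Start with ${\cal C}_0=\{1\}$. At stage $n$ the elements one is {\it forced} to adjoin form the set $S_n:=\{\,c^p : c\in {\cal C}_{n-1},\ c\mbox{ has pole order exactly }n/p\,\}$, which is non-empty only when $p\mid n$; the relevant $c$'s were put into ${\cal C}$ at earlier stages. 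The leading coefficient of $c^p$ at $P_0$ is the $p$-th power of that of $c$, and the leading coefficients of the finitely many $c$'s in question are $K$-linearly independent in the residue field $F_{P_0}$ (that is exactly what it means for these $c$'s to represent a basis of $L((n/p)P_0)/L((n/p-1)P_0)$); since the Frobenius is injective on $F_{P_0}$ it preserves this independence, so $S_n$ is $K$-linearly independent modulo $L((n-1)P_0)$. Extend $S_n$ to a $K$-basis of $L(nP_0)/L((n-1)P_0)$ by adjoining further elements of pole order exactly $n$, and let ${\cal C}_n$ be the resulting basis of $L(nP_0)$. Then ${\cal C}:=\bigcup_n {\cal C}_n$ is a $K$-basis of $A$ with ${\cal C}^{p}\subseteq {\cal C}$.

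To pass from $A$ to $F$, take principal parts at all places $Q\ne P_0$; this yields an exact sequence of left $K[\varphi]$-modules
\[
0\longrightarrow A\longrightarrow F\longrightarrow \bigoplus_{Q\ne P_0}\bigl(\widehat F_Q/\widehat{\cal O}_Q\bigr)\longrightarrow 0\;,
\]
surjectivity being the Riemann--Roch fact that finitely many principal parts can always be realized by a function which is allowed a pole of sufficiently high order at $P_0$. Fixing a uniformizer $t_Q$ at $Q$, the module $\widehat F_Q/\widehat{\cal O}_Q$ is $\bigoplus_{i\ge 1}F_Q\,e_i$, where $e_i$ is the class of $t_Q^{-i}$, with $\varphi(c\,e_i)=c^p\,e_{ip}$. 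Grouping the indices $i\ge 1$ into the Frobenius orbits $\{i_0p^a:a\ge 0\}$ with $p\nmid i_0$, each orbit block $\bigoplus_{a\ge 0}F_Q\,e_{i_0p^a}$ is a free $K[\varphi]$-module: if $\omega_1,\dots,\omega_d$ is a $K$-basis of $F_Q$, then $\{\,\omega_\ell\,e_{i_0}\,\}_\ell$ is a $K[\varphi]$-basis of it, the verification using once more that each power $\varphi^a$ sends $\{\omega_\ell\}$ to a $K$-basis of $F_Q$. Hence the quotient in the displayed sequence is a free $K[\varphi]$-module, the sequence splits, and $F=A\oplus M$ with $M\subseteq F$ a free $K[\varphi]$-submodule. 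If $\{m_\alpha\}$ is a $K[\varphi]$-basis of $M$, then
\[
{\cal C}\ \cup\ \{\,m_\alpha^{p^i}\ :\ i\ge 0,\ \alpha\,\}
\]
is a $K$-basis of $F$ closed under the Frobenius, as required. (In passing, this shows that $F/K$ is a free $K[\varphi]$-module, as claimed in the abstract.)

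The step I expect to be the genuine obstacle is the consistency of the recursion over $A$: one must be sure that the $p$-th powers one is forced to adjoin never become linearly dependent at the stage at which they must enter the basis. This is exactly why the construction is routed through a single place $P_0$: there $L(nP_0)/L((n-1)P_0)$ embeds cleanly into the residue field $F_{P_0}$, so that injectivity of the Frobenius on $F_{P_0}$ settles the matter; running the induction naively over all effective divisors of $F$ breaks down here, because the relevant quotients $L(D)/\sum_P L(D-P)$ need not embed into a product of residue fields. The remaining ingredients---surjectivity of the principal-parts map and freeness of the principal-parts modules---are routine given Riemann--Roch and the bijectivity of the Frobenius on perfect residue fields.
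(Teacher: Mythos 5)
Your argument is correct and follows the same skeleton as the paper's proof: fix a distinguished place ($P_0$ here, $P_\infty$ there); build a Frobenius-closed $K$-basis of $A=R^\infty=\bigcup_n{\cal L}(nP_\infty)$ by exactly the same recursion on the pole number, forcing in the $p$-th powers of earlier basis elements at each stage divisible by $p$; and complete the basis via the Hasse partial-fraction decomposition at the remaining places. Your independence check at stage $n=pm$ is phrased by looking at leading coefficients in the residue field $FP_0$ and invoking injectivity of the Frobenius there; the paper instead extracts $p$-th roots of the coefficients $c_i$ and derives a contradiction -- these are the same use of perfectness of $K$, just stated from opposite ends. The one substantive presentational difference is the second half: the paper writes down an explicit Frobenius-closed complement basis $u_{P,i}^{p^\nu}t_P^n$ (with $n=p^\nu m$, $p\nmid m$), whereas you observe that the quotient $F/A\cong\bigoplus_{Q\ne P_0}\widehat{F}_Q/\widehat{\cal O}_Q$ is a free left $K[\varphi]$-module -- the grouping of exponents into Frobenius orbits $\{i_0p^a\}$ is precisely the paper's $n=p^\nu m$ decomposition -- and then split the exact sequence. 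Your packaging is cleaner in that it makes the module-theoretic content explicit from the outset and delivers the free-module statement of Theorem~\ref{freeKfmod} almost directly, at the cost of being less concrete; the paper produces the complement basis by hand and then obtains freeness afterwards via Proposition~\ref{F/Kfree}. The essential ingredients -- Riemann--Roch, surjectivity of the principal-parts map, bijectivity of the Frobenius on the perfect residue fields $FP$ -- are identical in both.
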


The proof and some further background are given in
Section~\ref{sectFrcb}. There, we will also deduce the following result
from Theorem~\ref{Fc}, showing the connection between Frobenius-closed
bases and additive polynomials:

\begin{theorem}                             \label{freeKfmod}
If $F$ is an algebraic function field of transcendence degree $1$ over a
perfect field $K$ of characteristic $p>0$ and if $K$ is relatively
algebraically closed in $F$, then $F/K$ is a free $K[\varphi]$-module.
\end{theorem}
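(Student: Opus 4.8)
The plan is to deduce Theorem~\ref{freeKfmod} directly from Theorem~\ref{Fc} by showing that a Frobenius-closed basis of $F|K$ furnishes a free generating set of $F/K$ as a $K[\varphi]$-module. First I would fix a Frobenius-closed basis $(b_j)_{j\in J}$ of the $K$-vector space $F$, whose existence is guaranteed by Theorem~\ref{Fc}. By definition of Frobenius-closedness, the set $\{b_j^p\mid j\in J\}=\{\varphi(b_j)\mid j\in J\}$ is again contained in the basis, i.e.\ the map $j\mapsto (\text{the index of }b_j^p)$ defines an injection $\sigma\colon J\to J$. Since $K$ is perfect, $\varphi$ is a bijection on $K$, so in fact $\varphi$ permutes the one-dimensional $K$-subspaces spanned by basis elements; hence $\sigma$ is a bijection of $J$ and the $b_j$ are partitioned into $\varphi$-orbits, each orbit being either a finite cycle or a two-sided infinite $\Z$-chain under $\sigma$. (In our function-field situation one checks, using that $K$ is relatively algebraically closed in $F$ and $F\ne K$, that there can be no finite cycles among the basis elements other than the one containing $1$ — but for the module-theoretic statement it is cleaner to argue orbit by orbit, treating $F/K$ directly.)

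Next I would pass to the quotient $F/K$ and choose a transversal $I\subseteq J$ of the $\sigma$-orbits, discarding the orbit of $1$ (whose image in $F/K$ is zero and which generates the torsion submodule $K\cdot 1\cong \Fp[\varphi]/(\varphi-1)$ inside $F$ but dies in $F/K$). The claim is then that the images $\bar b_i$, $i\in I$, form a free generating family of the $K[\varphi]$-module $F/K$. Spanning is immediate: any $f\in F$ is a finite $K$-linear combination $\sum c_j b_j$; grouping the indices $j$ by their orbit and writing $b_j = b_{\sigma^{-n}(i)}$ for the orbit representative $i$ and the appropriate $n\ge 0$, and using $b_{\sigma^{n}(i)} = b_i^{p^n} = \varphi^n(b_i)$ together with $c_j b_{\sigma^n(i)} = c_j^{p^{-n}}\varphi^n$ applied to... — more precisely $c\,\varphi^n(b_i) = \varphi^n(c^{1/p^n} b_i)$ is not what we want; rather we use the skew-ring identity $c\varphi^n = \varphi^n c'$ where $c' = c^{1/p^n}$, so $c_j b_j = c_j \varphi^n(b_i)$ is exactly the element $(c_j\varphi^n)\cdot b_i$ of the module. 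Summing over the orbit gives an element of $K[\varphi]\cdot b_i$, so $f\in \sum_{i\in I} K[\varphi]\cdot \bar b_i$ modulo $K\cdot 1$, i.e.\ $F/K = \sum_i K[\varphi]\bar b_i$.

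For freeness, suppose $\sum_{i\in I} P_i\cdot \bar b_i = 0$ in $F/K$ with $P_i = \sum_n a_{i,n}\varphi^n\in K[\varphi]$, almost all zero. Lifting to $F$, this says $\sum_{i,n} a_{i,n}\varphi^n(b_i) = \sum_{i,n} a_{i,n} b_i^{p^n} = \sum_{i,n} a_{i,n} b_{\sigma^n(i)}$ lies in $K\cdot 1 = K b_{j_0}$, where $b_{j_0}=1$. Since the elements $b_{\sigma^n(i)}$ with $i\in I$, $n\ge 0$ range without repetition over all basis vectors outside the orbit of $1$ (here I use that $I$ is a transversal and that $\sigma$ is injective on each orbit), and none of them equals $b_{j_0}$, $K$-linear independence of the basis forces $a_{i,n}=0$ for all $i,n$, hence every $P_i=0$. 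The main obstacle is the bookkeeping in the orbit decomposition — specifically, making sure that in an infinite $\Z$-chain $\dots, \sigma^{-1}(i), i, \sigma(i),\dots$ one uses the correct non-negative exponents (a basis element sitting "below" the representative $i$ in its chain would require a negative power of $\varphi$, which does not exist in $K[\varphi]$), so one must choose the transversal $I$ to consist of the "bottom" elements of each chain; equivalently, one reindexes so that every orbit of a non-unit basis element, read under $\varphi$, starts at a well-defined element of $I$ and proceeds upward. Once the orbit structure is set up correctly and one observes that perfectness of $K$ is exactly what makes $\varphi$ bijective on $K$ (so that the skew-polynomial manipulations $c\varphi^n = \varphi^n c^{1/p^n}$ are legitimate), the verification of spanning and independence is the routine computation sketched above, and Theorem~\ref{freeKfmod} follows.
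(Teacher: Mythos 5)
The overall strategy is sound and close to the paper's: extract a $K[\varphi]$-basis of $F/K$ from a Frobenius-closed $K$-basis $B$ of $F$. But there is a genuine gap in the argument, and it hides behind a false claim.

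You assert that $\sigma\colon J\to J$, $b_{\sigma(j)}:=b_j^p$, is a bijection ``since $K$ is perfect, $\varphi$ is a bijection on $K$, so $\varphi$ permutes the one-dimensional $K$-subspaces spanned by basis elements.'' This does not follow. Perfectness of $K$ makes $\varphi$ bijective on $K$, not on $F$; the Frobenius on $F$ has image $F^p$, a proper $K$-subspace of $F$ (of codimension $p^{\mathrm{trdeg}}-1$ when $K$ is perfect), so $\varphi$ certainly does not permute the basis lines and $\sigma$ is only an injection. With $\sigma$ merely injective, a $\sigma$-orbit in $J$ can a priori be a finite cycle, a one-sided chain (with a ``bottom'' element not in the image of $\sigma$), \emph{or} a two-sided $\Z$-chain. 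Your transversal strategy of picking the bottom of each chain simply does not apply to a $\Z$-chain, and nothing in your argument excludes such chains. If $b_j$ sits in a $\Z$-chain, then $b_j$ has a $p^n$-th root inside $B$ (hence inside $F$) for every $n\in\N$, yet $b_j\notin K$, and then no choice of representative $i$ makes the elements $\varphi^n(b_i)$ ($n\ge 0$) reach $b_j$; worse, some $b_{\sigma^{-n}(i)}$ would require a negative power of $\varphi$, which does not exist in $K[\varphi]$. Ruling out $\Z$-chains is precisely the content of the lemma the paper proves for this purpose (``if $K$ is relatively algebraically closed in $F$ then no element of $F\setminus K$ has a $p^n$-th root in $F$ for every $n\in\N$''), and its proof is not formal: it places a $K$-rational valuation on $F$ with the values of a transcendence basis rationally independent, and uses the fundamental inequality to bound $p$-divisibility of $vf$ in $vF$. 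Without this lemma (or an equivalent argument), your claim that the orbits all have bottoms, and hence that the images $\bar b_i$ for $i\in I$ generate $F/K$ over $K[\varphi]$, is unsubstantiated. A secondary point: you tacitly equate ``$b\in B$ has a $p$-th root in $F$'' with ``$b\in B$ has a $p$-th root in $B$,'' which is exactly what Lemma~\ref{Frpp} guarantees; citing it would tighten the argument, though for freeness alone it is not strictly necessary. Once you add the lemma ruling out $p^\infty$-th roots and drop the bijectivity claim, the rest of your spanning and independence computation (including the skew-ring identity $c\varphi^n=\varphi^n c^{1/p^n}$) is the same as the paper's and goes through.
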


\parm
The second important theorem that I use in the proof of
Theorem~\ref{mttame} is needed when the valued function field $(L|K,v)$
has non-trivial transcendence defect, i.e., equality does not hold in
(\ref{wtdgeq}). In reducing to transcendence degree $1$ by induction,
one reaches the case where $(L,v)$ is an immediate extension of
transcendence degree $1$ of the tame field $(K,v)$. The defect is then
avoided by means of the following theorem.

\begin{theorem}                \label{HR}
{\bf (Henselian Rationality)}\n
Let $(K,v)$ be a tame field and $(L|K,v)$ an immediate function field of
trans\-cendence degree $1$. Then the henselization $(L,v)^h$ of $(L,v)$
is \bfind{henselian rational}, i.e.,
\begin{equation}
\mbox{there is $x\in L$ such that }\; L^h\,=\,K(x)^h\;.
\end{equation}
\end{theorem}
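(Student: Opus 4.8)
The plan is to build the generator $x$ by starting from an arbitrary separating transcendental element of $L$ and then successively ``absorbing'' the wild part of $L^h|K(x)^h$ into better and better generators, until nothing is left. Throughout I would pass to henselizations and exploit that a tame field $(K,v)$ is henselian and defectless, hence \emph{algebraically maximal} --- by (\ref{fundeq}) a proper finite immediate algebraic extension would have defect equal to its degree --- and that its value group is $p$-divisible and its residue field is perfect with no separable extension of degree $p$. These are precisely the ingredients consumed in the normal-form analysis below, so the theorem is not to be expected without them.

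\emph{First reductions.} Since $(L|K,v)$ is immediate, $vL=vK$ and $Lv=Kv$, so for any $x\in L$ transcendental over $K$ the tower $K\subseteq K(x)\subseteq L$ consists of immediate extensions and $L|K(x)$ is finite immediate; by (\ref{fundeq}) its degree (and that of $L^h|K(x)^h$) is a power of $p$. Because $K$ is perfect and $\trdeg L|K=1$, the function field $L|K$ is separably generated, so we may pick $x$ with $L|K(x)$ separable; thus $L^h|K(x)^h$ is a finite separable immediate extension of henselian fields. It now suffices to prove: if $L^h\neq K(x)^h$, then there is a transcendental $x'\in L$ with $L|K(x')$ separable and $K(x)^h\subsetuneq K(x')^h\subseteq L^h$. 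Indeed the positive integer $[L^h:K(x)^h]$ then strictly decreases, and after finitely many steps it reaches $1$, i.e.\ $L^h=K(x)^h$. (Any purely inseparable contribution, removed here by the choice of a separating $x$, would in any case be harmless since $K$ is perfect.)

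\emph{The absorption step.} Assume $L^h\neq K(x)^h$. Passing if necessary to the Galois closure inside a fixed algebraic closure of $K(x)^h$ --- which stays immediate because $K(x)^h$ is henselian, so that its Galois group is a $p$-group and admits a chain of intermediate fields --- one obtains an Artin--Schreier extension of degree $p$ which, by Theorem~\ref{GalpAS}, is generated by a root $\vartheta$ of some $X^p-X-b$; a little care (using that $L^h|L$ is immediate and $L$ dense in $L^h$, and, if need be, a preliminary reduction to $K$ algebraically closed) lets one take $\vartheta\in L$ and $b\in K(x)$. Immediacy of $K(x)^h(\vartheta)|K(x)^h$ forces $b$ to be well approximable by $\wp(K(x)^h)=(\varphi-1)K(x)^h$. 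Now comes the key manoeuvre: using the freedom to change $b$ modulo $\wp(K(x)^h)$ together with the $p$-divisibility of $vK$ and the perfectness of $Kv$ --- exactly the moves (clearing $p$-th powers, pushing residues through the Frobenius) that drive the normal-form reduction sketched in Section~\ref{sectreas} and the proof of Theorem~\ref{ai} --- one shows that, after a suitable choice of the generator of $K(x)$, one may assume $b=\alpha x+\beta$ with $\alpha\neq0$. But then $x=\alpha^{-1}(\vartheta^p-\vartheta-\beta)\in K(\vartheta)$, so $K(x,\vartheta)=K(\vartheta)$ is again a rational function field over $K$; taking $x'=\vartheta$ yields $K(x)^h\subsetuneq K(x')^h\subseteq L^h$ with $L|K(x')$ separable, as required.

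\emph{The main obstacle.} The crux is the normalization in the previous paragraph: showing that for an Artin--Schreier extension that is immediate over $K(x)^h$ and sits inside the immediate transcendence-degree-$1$ extension $L^h|K$, the defining element $b$ can always be brought --- after modifying it modulo $\wp(K(x)^h)$ and changing the transcendental generator --- into the rational shape $\alpha x+\beta$ (equivalently, that the associated cover of $\mathbf{P}^1$ is unramified away from a single simple place, hence has genus $0$). This is where the hypotheses on $vK$ and $Kv$ are genuinely used, and it is the transcendence-defect analogue of the already delicate normal-form arguments behind the Generalized Stability Theorem~\ref{ai}; one must control the interaction between the Artin--Schreier normal form over $K(x)$ and the way $x$ itself is approximated by $K$ (equivalently, the pseudo Cauchy sequence in $K$ of which $x$ is a pseudo limit). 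Algebraic maximality of $K$ is what excludes the competing scenarios (an immediate algebraic piece of $L^h|K(x)^h$ not reducible to Artin--Schreier type, and an unavoidable inseparable obstruction to separable generation of $L|K$).
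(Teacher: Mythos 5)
There is a genuine gap, and it is located exactly where the paper itself identifies the crux. You try to reduce the induction step to an Artin--Schreier extension of degree $p$: ``passing if necessary to the Galois closure\dots one obtains an Artin--Schreier extension of degree $p$\dots a little care lets one take $\vartheta\in L$.'' The paper explicitly warns that this reduction is not available: ``even if $L^h|K(x)^h$ is separable, it will not necessarily be a tower of Artin--Schreier extensions.'' The whole point of introducing purely wild extensions and proving Theorem~\ref{mpwe} is that the minimal subextensions of $L^h|K(x)^h$ are \emph{minimal purely wild} extensions, and these can have degree $p^n$ with $n>1$ while admitting no proper nontrivial subextension at all. Such a minimal subextension is generated by a root of a $p$-polynomial ${\cal A}(X)-{\cal A}(\vartheta)$ of degree $p^n$, not by a root of $X^p-X-b$. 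Your absorption step, which cycles on Artin--Schreier data and strictly decreases $[L^h:K(x)^h]$, therefore does not get off the ground in the general case.

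The Galois-closure manoeuvre does not repair this. First, the Galois closure $N$ of $L^h|K(x)^h$ need not be immediate over $K(x)^h$ (henselianness gives a unique valuation on $N$, not immediacy), so the degree-$p$ subextension you extract from $N$ need not be immediate and its Artin--Schreier normal form need not reflect defect. Second, and more fundamentally, a degree-$p$ intermediate field of $N|K(x)^h$ is in general not contained in $L^h$: if $\Gal(N|L^h)$ is a maximal subgroup of the $p$-group $\Gal(N|K(x)^h)$ without normal subgroups of $p$-power index lying above it, $L^h|K(x)^h$ has no proper subextension, so the root $\vartheta$ of $X^p-X-b$ you produce does not live in $L$, and you cannot set $x'=\vartheta$. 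The paper's route avoids both problems by replacing ``Artin--Schreier generator'' with ``$p$-polynomial generator'': once a minimal purely wild subextension of $L^h|K(x)^h$ is written as $K(x)^h(\vartheta)$ with minimal polynomial ${\cal A}(X)-{\cal A}(\vartheta)$, ${\cal A}$ additive with coefficients in ${\cal O}$, the additivity of ${\cal A}$ plays exactly the role you wanted $X^p-X$ to play (changing the defining constant modulo ${\cal A}(K(x)^h)$, pushing through the Frobenius, etc.), and one then performs the normal-form reduction at the level of these higher-degree $p$-polynomials. That is the step you would have to supply, and it is not a cosmetic difference: it is the content of Theorem~\ref{mpwe} (via Theorem~\ref{semi-p-el} and Lemma~\ref{semi-p-elzus}), and without it the inductive descent you describe breaks down precisely on the minimal purely wild extensions of degree $p^n$, $n>1$.
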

For valued fields of residue characteristic 0, the assertion is a direct
consequence of the fact that every such field is defectless (in fact,
every $x\in L\setminus K$ will then do the job). In contrast to this,
the case of positive residue characteristic requires a much deeper
structure theory of immediate algebraic extensions of henselian fields,
in order to find suitable elements $x$. I proved this theorem in [Ku1]
(cf.\ also [Ku10]).

The proof works as follows. Suppose we have chosen the wrong $x$. Then
the extension $L^h|K(x)^h$ is proper and immediate. So by (\ref{fundeq})
its defect is equal to its degree and thus non-trivial. If $L^h|K(x)^h$
were an Artin-Schreier extension, we could employ the same methods as
described above to find a normal form that allows us to find a better
$x$ (i.e., one for which the degree $[L^h:K(x)^h]$ is smaller). But in
general, even if $L^h|K(x)^h$ is separable, it will not necessarily be a
tower of Artin-Schreier extensions. Note that because its degree is a
prime, an Artin-Schreier extension does not admit any proper
subextensions; such an extension is called \bfind{minimal}. This leads
us to the following question: what is the structure of minimal
subextensions of such extensions $L^h|K(x)^h$?

Since $L^h|K(x)^h$ is immediate, but every finite subextension of
$K(x)^r|K(x)^h$ (where $K(x)^r:=(K(x)^h)^r\,$) is defectless by part e)
of Theorem~\ref{ram}, it follows that $L^h|K(x)^h$ is linearly disjoint
from $K(x)^r|K(x)^h$. Take any henselian field $(k,v)$. Then an
algebraic extension $k_1$ is called \bfind{purely wild} if $k_1|k$ is
linearly disjoint from $k^r|k$. Hence, our extension $L^h|K(x)^h$ is
purely wild. Our question is now answered by the following theorem,
which again shows the importance of $p$-polynomials and hence also of
additive polynomials. This theorem is due to Florian Pop (\fvklit{Pop}).

\begin{theorem}                             \label{mpwe}
Let $(k,v)$ be a henselian field of characteristic $p>0$ and $(k_1|k,v)$
a minimal purely wild extension. Then there exist an additive polynomial
${\cal A} (X)\in {\cal O}_k[X]$ and an element $\vartheta\in k_1$ such
that $k_1= k(\vartheta)$ and the $p$-polynomial ${\cal A} (X)-{\cal A}
(\vartheta)$ is the minimal polynomial of $\vartheta$ over $k$.
\end{theorem}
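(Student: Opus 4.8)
\medskip\noindent\textbf{Plan of proof.}
The plan is to reduce to $[k_1:k]=p$, settle the purely inseparable case by hand, and treat the separable case by passing to the normal hull, where pure wildness forces the Galois group to be a Frobenius group. Having no proper subextension, $k_1|k$ is either purely inseparable or separable. If it is purely inseparable, then $[k_1:k]=p$ (a purely inseparable extension of larger degree always has a proper subextension), $k_1=k(\vartheta)$ with $\vartheta^p\in k$, and ${\cal A}(X):=X^p\in{\cal O}_k[X]$ works, since ${\cal A}(X)-{\cal A}(\vartheta)=X^p-\vartheta^p$ is then the minimal polynomial of $\vartheta$. (Pure wildness is automatic here, $k^r|k$ being separable.) So assume from now on that $k_1|k$ is separable.

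I claim it suffices to find a generator $\vartheta$ of $k_1|k$ whose set of $k$-conjugates is a coset $\vartheta+V$ of a one-dimensional $\Fp$-subspace $V\subseteq k\sep$. Indeed, such a $V$ is automatically stable under $\Gal(k\sep|k)$ (the conjugate set is, and $V$ is an additive subgroup), so ${\cal A}_V(X):=\prod_{\zeta\in V}(X-\zeta)$ is a monic additive polynomial with coefficients in $k$, of degree $p$ (explicitly ${\cal A}_V(X)=X^p-c_0X$ for some $c_0\in k$). By additivity ${\cal A}_V(X)-{\cal A}_V(\vartheta)={\cal A}_V(X-\vartheta)$ has root set exactly $\vartheta+V$, hence is the minimal polynomial of $\vartheta$. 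Finally, replacing $\vartheta$ by $u\vartheta$ for $u\in k^\times$ of sufficiently large value replaces $V$ by $uV$ and $c_0$ by $u^{p-1}c_0$, which puts ${\cal A}_{uV}$ into ${\cal O}_k[X]$. So everything comes down to producing such a $\vartheta$.

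To build it, let $L$ be the normal hull of $k_1|k$, $G=\Gal(L|k)$, and $E:=L\cap k^r$; then $E|k$ is Galois, $G_0:=\Gal(L|E)$ is normal in $G$, and $G_0$ is a finite $p$-group because $L.k^r|k^r$ is a subextension of the $p$-extension $k\sep|k^r$ (Theorem~\ref{ram}). Since $k_1|k$ is purely wild and proper, $k_1\not\subseteq k^r$, so $G_0\ne1$; and since $k_1$ is linearly disjoint from $k^r$, hence from $E$, over $k$, the extension $k_1.E|E$ has degree $[k_1:k]$ and sits inside $L|E$, so $[k_1:k]$ is a power of $p$. As $\Gal(L|k_1)$ is moreover a maximal subgroup of $G$ (by minimality) while $G=\Gal(L|k_1)\cdot G_0$ with $G_0$ a normal $p$-subgroup, an elementary $p$-group argument forces $[k_1:k]=p$; thus $G_0=\langle\tau\rangle$ is the normal Sylow $p$-subgroup of $G$, $m:=[E:k]$ is prime to $p$, and $G$ is a Frobenius group, the semidirect product of $G_0$ by a cyclic complement $C\cong\Gal(E|k)$ of order $m\mid p-1$ acting faithfully on $G_0$; after conjugating, $k_1=L^{\langle\tilde\sigma\rangle}$ for a generator $\tilde\sigma$ of $C$, with $\tilde\sigma\tau\tilde\sigma^{-1}=\tau^c$, $c\in\Fp^\times$ of order $m$. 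By Artin--Schreier theory (Theorem~\ref{GalpAS}), $L=E(\eta)$ with $\eta^p-\eta\in E$ and $\tau\eta=\eta+1$; the normalizing relation then forces $\tilde\sigma\eta=c^{-1}\eta+e_1$ for some $e_1\in E$, and $\tilde\sigma^m\eta=\eta$ yields $\sum_{j=0}^{m-1}c^{j}\sigma^{j}(e_1)=0$ with $\sigma:=\tilde\sigma|_E$. Now Hilbert~90 provides $b_1\in E^\times$ with $\sigma(b_1)=cb_1$ (the relevant norm being $c^{m}=1$), and its additive form provides $b_0\in E$ with $\sigma(b_0)-b_0=-cb_1e_1$ --- the trace obstruction $\mathrm{Tr}_{E|k}(cb_1e_1)=cb_1\sum_jc^{j}\sigma^{j}(e_1)$ vanishing by the identity just obtained. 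One checks that $\vartheta:=b_1\eta+b_0$ is fixed by $\tilde\sigma$, so $\vartheta\in k_1$; that it generates $k_1|k$ (since $E(\vartheta)=E(\eta)=L$ and $\vartheta\notin E$); and that $\tau^{j}\vartheta=\vartheta+jb_1$, so the $k$-conjugates of $\vartheta$ are exactly $\vartheta+\Fp b_1$, a coset of the one-dimensional space $V:=\Fp b_1$, as required. (For $m=1$ this is just the Galois case, with $\vartheta=\eta$, $V=\Fp$ and ${\cal A}_V(X)=X^p-X$.)

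The crux --- and the one place where pure wildness is really used in the separable case --- is establishing that $[L:E]=p$, hence the normal Sylow $p$-subgroup, hence the solvable (Frobenius) structure of $G$: for a general separable degree-$p$ extension the normal hull may have Galois group $S_p$, which for $p\ge5$ is not of this shape, and then no coset-generator exists --- the conclusion genuinely fails. Once that group-theoretic input is in place, the remainder is the Hilbert~90 computation above; the Artin--Schreier normalizations and the closing rescaling into ${\cal O}_k[X]$ are routine checks.
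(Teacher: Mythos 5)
Your reduction to $[k_1:k]=p$ is the crux of your argument, as you yourself observe, and it is where the proof breaks down: the ``elementary $p$-group argument'' you invoke does not exist. If $G$ is a finite group with a normal $p$-subgroup $G_0$ and a maximal subgroup $H$ satisfying $HG_0=G$, it does \emph{not} follow that $[G:H]=p$. What is true (by the Frattini-subgroup argument that the paper makes) is that $G_0/(H\cap G_0)$ is elementary abelian and is an \emph{irreducible} $\Fp[G]$-module; but irreducible $\Fp[G]$-modules can have $\Fp$-dimension $>1$. Concretely, take $G_0\cong\Fp^n$ and $H\cong\Z/(p^n-1)\Z$ a Singer cycle acting irreducibly; then $H$ is maximal in $G=G_0\rtimes H$, yet $[G:H]=p^n$. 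Correspondingly, minimal purely wild extensions of degree $p^n$ with $n>1$ do occur, and the paper's setup (condition~(\ref{semi-p-elcond}), the lemma in Section~\ref{sectmpwe} asserting only that ${\cal N}/{\cal D}$ is elementary-abelian and irreducible) is designed precisely to handle them. In such cases $G$ is not a Frobenius group of your description, so your Hilbert~90 computation, which produces a single Artin-Schreier generator whose conjugate set is a coset of a one-dimensional $\Fp$-space, has nothing to attach to.

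The good news is that your framing reduction is sound and actually generalizes: to prove the theorem it suffices to exhibit a generator $\vartheta$ of $k_1|k$ whose $k$-conjugates form a coset $\vartheta+V$ of a \emph{finite-dimensional} $\Gal(k\sep|k)$-stable $\Fp$-subspace $V\subset k\sep$, and your rescaling $\vartheta\mapsto u\vartheta$ still pushes ${\cal A}_{uV}$ into ${\cal O}_k[X]$. Your treatment of the purely inseparable case, and your solvable-case Hilbert~90 calculation (which reproves Theorem~\ref{GalpAS} and its twisted analogue when $n=1$), are both correct and would survive as a special case. But producing the coset-generator when $\dim_{\Fp}V>1$ is exactly the content of Theorem~\ref{semi-p-el}, which the paper obtains via a normal basis of $E|k$, an injective $G$-module embedding $\phi:N\to(L',+)$ built from an additive character, and the additive Hilbert~90 argument for the resulting $1$-cocycle. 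To repair your proof you would need to replace the false degree-$p$ claim by the correct irreducibility statement and then redo the cocycle computation for an arbitrary elementary-abelian $N$ with an arbitrary complement $H$, at which point you essentially reconstruct the paper's Section~\ref{sectagfe}.
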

\n
It can be shown using Hensel's Lemma that if $k_1\ne k$, then ${\cal A}
(\vartheta)\notin {\cal O}_k\,$.

\pars
Using the additivity of the polynomial ${\cal A}$ like I used the
additivity of the Artin-Schreier polynomial $X^p-X$ before, it is indeed
possible to deduce a normal form that allows to find a better $x$.
Therefore, Theorem~\ref{mpwe} is an important ingredient in the proof of
Theorem~\ref{HR}.
Three sections of this paper are devoted to the previously unpublished
proof of Theorem~\ref{mpwe}. For the convenience of the reader,
$G$-modules and twisted homomorphisms are introduced in
Section~\ref{sectgmgc}. In Section~\ref{sectagfe}, a Galois theoretical
result of independent interest is proved. It is a generalization of the
theorem that I have already used above and that states that every
Galois extension of degree $p$ in characteristic $p$ is an
Artin-Schreier extension. Then I apply it in Section~\ref{sectmpwe} to
the situation of purely wild extensions and derive Theorem~\ref{mpwe}.

\pars
Theorem~\ref{mpwe} gains even more importance in conjunction with a
result of Matthias Pank (see [Ku--Pa--Ro]):
\begin{theorem}                             \label{pank}
Let $(K,v)$ be a henselian field. Then $K^r$ admits a field complement
$W$ in the algebraic closure $\tilde{K}$, that is, $W.K^r=\tilde{K}$ and
$W\cap K^r=K$. Every such complement $W$ is a maximal purely wild
extension of $K$. The quotient group $vW/vK$ is a $p$-group (where $p$
is the characteristic exponent of $Kv$), and the extension $Wv|Kv$ is
purely inseparable.
\end{theorem}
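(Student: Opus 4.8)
\medskip\noindent{\it Proof proposal.}\ If $\chara Kv=0$ then $p=1$, $K^r=\tilde K$ and $W=K$ works; so assume $p$ is a prime. The plan is to pass to profinite group theory. Put $G:=\Gal(K\sep|K)$ and let $G^r:=\Gal(K\sep|K^r)$ be the ramification group of $K\sep|K$. By Theorem~\ref{ram} and standard ramification theory, $G^r$ is a closed normal subgroup of $G$; in fact $G^r$ is the absolute Galois group of $K^r$, and since $K^r$ has no nontrivial finite extension of degree prime to $p$ — its residue field is separably closed and its value group is divisible by every prime $\ne p$ — the group $G^r$ is a pro-$p$-group. A field complement of $K^r$ inside $K\sep$ is, by the Galois correspondence, precisely a closed subgroup $H\le G$ with $H\cap G^r=\{1\}$ and $HG^r=G$ (as $G^r$ is normal, $HG^r$ is a closed subgroup); one then puts $W_{\rm s}:={\rm Fix}(H)$. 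Since the perfect hull $K^{1/p^\infty}$ is purely inseparable over $K$, hence linearly disjoint from $K^r|K$, and $\tilde K=K\sep\cdot K^{1/p^\infty}$, the field $W:=W_{\rm s}\cdot K^{1/p^\infty}$ then satisfies $W\cap K^r=K$ and $W\cdot K^r=\tilde K$. So everything reduces to producing a closed complement $H$ of the normal pro-$p$-subgroup $G^r$ in $G$, and this is the heart of the matter.

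I would establish this by showing that the group extension
\[ 1\;\longrightarrow\;G^r\;\longrightarrow\;G\;\longrightarrow\;\Gal(K^r|K)\;\longrightarrow\;1 \]
splits. The bare Schur--Zassenhaus theorem does not apply, since $\Gal(K^r|K)$ has $\Gal(Kv\sep|Kv)$ as a quotient and so need not be of order prime to $p$. Instead one uses that $G^r$ is a {\it free} pro-$p$-group — in equal characteristic $p$ this is immediate, because $\chara K^r=p$ forces the cohomological $p$-dimension of $K^r$ to be at most $1$, and in mixed characteristic it is again part of the ramification theory — which via Schur--Zassenhaus-type arguments (exploiting, e.g., the triviality of the centre of a nontrivial free pro-$p$-group of rank $\ge 2$) yields the desired splitting; this is the circle of ideas by which Kaplansky's uniqueness theorem is reproved in [Ku--Pa--Ro]. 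Alternatively and equivalently, one may first invoke Zorn's Lemma to obtain a subextension $W$ of $\tilde K|K$ maximal among those linearly disjoint from $K^r|K$, and then prove $W\cdot K^r=\tilde K$ directly by ramification theory, using that $K^r$ itself has no nontrivial tame extension and that $K\sep|K^r$ is a $p$-extension. I expect this step — the construction of the complement — to be the main obstacle; the rest is comparatively routine.

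Granting a complement $H$, and hence $W$, the remaining assertions follow. Since $K^r|K$ is Galois, the condition $W\cap K^r=K$ is equivalent to $W|K$ being linearly disjoint from $K^r|K$, i.e.\ to $W$ being purely wild. To see that $W$ is maximal such, suppose $W\subseteq W'\subseteq\tilde K$ with $W'|K$ purely wild. Then $W'\subseteq W\cdot K^r$, and since $WK^r|W$ is Galois with restriction inducing an isomorphism $\Gal(WK^r|W)\isom\Gal(K^r|K)$, the intermediate fields of $WK^r|W$ are exactly the $WE$ with $K\subseteq E\subseteq K^r$; applying this to $W'$ gives $W'=W\cdot(W'\cap K^r)=W\cdot K=W$.

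It remains to check that $vW/vK$ is a $p$-group and $Wv|Kv$ is purely inseparable. I would do this by reduction to finite subextensions. Every finite subextension $W_0|K$ of $W|K$ is again purely wild, since $W_0\cap K^r\subseteq W\cap K^r=K$. Now for a finite purely wild extension $W_0|K$ of the henselian field $K$, the maximal tame subextension of $W_0|K$ is a tame extension of $K$, hence is contained in the maximal tame extension $K^r$, hence equals $W_0\cap K^r=K$; by the structure theory of finite extensions of henselian fields (the Lemma of Ostrowski, together with the fact that this maximal tame subextension accounts for exactly the prime-to-$p$ part of the ramification index and the separable part of the inertia degree), its triviality forces $(vW_0:vK)$ to be a power of $p$ and $W_0v|Kv$ to be purely inseparable. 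Passing to the direct limit over all finite $W_0\subseteq W$ shows that every element of $vW/vK$ has $p$-power order and that $Wv|Kv$ is purely inseparable, as required.
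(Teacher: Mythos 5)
You correctly reduce everything to the separable closure and handle the purely inseparable part cleanly, and your arguments for the maximality of $W$, for $vW/vK$ being a $p$-group, and for $Wv|Kv$ being purely inseparable (via Theorem~\ref{ram} and the Lemma of Ostrowski applied to finite subextensions) are essentially sound. Note, however, that the paper itself does not prove Theorem~\ref{pank}: it attributes the result to Pank and refers to [Ku--Pa--Ro], so there is no in-text proof to compare against.

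The genuine gap is exactly where you anticipate it: the construction of the complement. The inference you propose --- that freeness of the pro-$p$-group $G^r$, ``via Schur--Zassenhaus-type arguments,'' yields a splitting of $1\to G^r\to G\to\Gal(K^r|K)\to 1$ --- is not valid. Freeness of the {\em kernel} of a profinite group extension does not imply that the extension splits: the extension $1\to p\Z_p\to\Z_p\to\Z/p\Z\to 1$ has a free pro-$p$-group of rank $1$ as kernel, yet is non-split, since $\Z_p$ is torsion-free. Your parenthetical appeal to trivial centres of free pro-$p$-groups of rank $\geq 2$ does not produce a section either (and $\Gal K^r$ may well have rank $0$ or $1$), and the freeness of $\Gal K^r$ in mixed characteristic is asserted without justification. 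Your alternative route via Zorn's Lemma is closer to the argument in [Ku--Pa--Ro], but you only gesture at it: the crucial claim, that a maximal subextension $W$ of $\tilde K|K$ linearly disjoint from $K^r|K$ satisfies $W.K^r=\tilde K$, is left unargued, and once $W$ is perfect it hides the same difficulty --- one must show that a henselian field $(W,v)$ with $W^r\neq W\sep$ admits a proper purely wild extension, which is again a nontrivial profinite supplement problem. As written, the central existence assertion is not established.
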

\n
Note that $(K,v)$ is a tame field if and only if $W=K$.

%
%
\subsection{Reason \#3: extremality and elementary properties of
additive polynomials}                                 \label{sectepap}
If $f$ is a polynomial in $n$
variables with coefficients in $K$, then we will say that $(K,v)$ is
\bfind{extremal with respect to $f$} if the set
\begin{equation}                            \label{defextr}
\{vf(a_1,\ldots,a_n)\mid a_1,\ldots,a_n\in K\}\>\subseteq vK\cup\{\infty\}
\end{equation}
has a maximum. This means that
\[\exists Y_1,\ldots,Y_n \forall X_1,\ldots,X_n vf(X_1,\ldots,X_n)\leq
vf(Y_1,\ldots,Y_n)\]
holds in $(K,v)$. It follows that being extremal with respect to $f$ is
an elementary property in the language of valued fields with parameters
from $K$. Note that the maximum is $\infty$ if and only if $f$ admits a
$K$-rational zero. A valued field $(K,v)$ is called \bfind{extremal} if
for all $n\in\N$, it is extremal with respect to every polynomial $f$ in
$n$ variables with coefficients in $K$. This notion is due to Ershov.
The property of being extremal can be expressed by a countable scheme of
elementary sentences (quantifying over the coefficients of all possible
polynomials of degree at most $n$ in at most $n$ variables). Hence, it
is elementary in the language of valued fields.

The following result was first stated by Delon in [Del], but the proof
contained gaps. The gaps were later filled by Ershov in [Er2]. I give
an alternative proof in [Ku8].

\begin{theorem}                             \label{extram}
A valued field is algebraically maximal if and only if it is extremal
with respect to every polynomial in one variable.
\end{theorem}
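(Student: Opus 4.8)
The plan is to prove Theorem~\ref{extram} in the equivalent form: $(K,v)$ is algebraically maximal if and only if for every $f\in K[X]$ either $f$ has a root in $K$ or the set $\{vf(a)\mid a\in K\}$ attains a maximum in $vK$. This is literally the assertion of the theorem, because the paper's definition of extremality permits the maximum to be $\infty$, and $\infty$ is attained precisely when $f$ has a $K$-rational zero. The whole argument rests on Kaplansky's correspondence ([Ka1], see also [Ri]) between proper immediate algebraic extensions of $(K,v)$, pseudo-Cauchy sequences of algebraic type in $K$ that have no pseudo-limit in $K$, and the polynomials of minimal degree associated to them; I will use both directions of this correspondence as known.

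\emph{Extremal with respect to every one-variable polynomial $\Rightarrow$ algebraically maximal.} I argue contrapositively. If $(K,v)$ is not algebraically maximal, fix a proper immediate algebraic extension; by Kaplansky's theory there is a pseudo-Cauchy sequence $(a_\nu)$ in $K$, of algebraic type, having no pseudo-limit in $K$, together with an associated polynomial $f\in K[X]$ of minimal degree. Standard properties of such a sequence give: $vf(a_\nu)$ is eventually strictly increasing; $f$ has no root in $K$ (a root of $f$ in $K$ would be a pseudo-limit of $(a_\nu)$, contrary to the choice of the sequence); and for every $b\in K$ one has $vf(b)<\sup_\nu vf(a_\nu)$. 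Hence the set $\{vf(b)\mid b\in K\}$ contains the strictly increasing sequence $(vf(a_\nu))$, is bounded above by $\sup_\nu vf(a_\nu)$, and does not attain that bound; so it has no maximum and $(K,v)$ is not extremal with respect to $f$.

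\emph{Algebraically maximal $\Rightarrow$ extremal with respect to every one-variable polynomial.} Again contrapositively: suppose $(K,v)$ is not extremal with respect to some polynomial in one variable, and choose such an $f\in K[X]$ of least degree. Then $f$ has no root in $K$ (otherwise the maximum would be $\infty$) and $\{vf(a)\mid a\in K\}$ has no maximum, so there are $a_\nu\in K$ with $vf(a_\nu)$ strictly increasing and cofinal in this set. Expanding $f$ in its Hasse derivatives around $a_\nu$ and using the minimality of $\deg f$, one shows that a cofinal subsequence of $(a_\nu)$ is a pseudo-Cauchy sequence; it is of algebraic type since $vf(a_\nu)$ is not eventually constant, and it has no pseudo-limit in $K$ — indeed a pseudo-limit $a\in K$ would satisfy $vf(a)>vf(a_\nu)$ for all $\nu$, contradicting the cofinality of $(vf(a_\nu))$ in $\{vf(a)\mid a\in K\}$. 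Adjoining to $K$ a root of the minimal-degree polynomial associated to this pseudo-Cauchy sequence then yields, by Kaplansky's theory, a proper immediate algebraic extension of $(K,v)$, so $(K,v)$ is not algebraically maximal.

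I expect the main obstacle to be the extraction step in the last paragraph: converting a sequence $(a_\nu)$ with $vf(a_\nu)$ strictly increasing into a genuine pseudo-Cauchy sequence of algebraic type whose associated minimal-degree polynomial behaves as required. This is exactly where Kaplansky's estimates on the values of the Hasse derivatives of $f$ along the sequence enter, and where the minimality of $\deg f$ is used in an essential way (it forces every polynomial of smaller degree either to have a zero in $K$ or to have a value set with a maximum, which is what keeps the lower-order Taylor terms under control). The remaining ingredients — the behaviour of $vf$ along a pseudo-Cauchy sequence and the fact that a root of the associated minimal-degree polynomial generates an immediate extension — are classical; a more computational, self-contained treatment of the whole equivalence is carried out in [Ku8].
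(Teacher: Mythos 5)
The paper does not prove Theorem~\ref{extram}; it attributes the statement to Delon [Del] (whose proof had gaps), the repair to Ershov [Er2], and refers to [Ku8] for an alternative proof. So there is no in-paper argument to compare against. Your strategy --- reducing both directions to Kaplansky's theory of pseudo-Cauchy sequences of algebraic type from [Ka1] --- is the one the paper itself points to (see the remark at the end of the subsection on extremality, that from an immediate extension generated by a polynomial $f$ one can construct a pseudo-Cauchy sequence of algebraic type along which the value of $f$ is not ultimately fixed).

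Your first direction is essentially right, though the ``standard property'' you invoke, that $vf(b)<\sup_\nu vf(a_\nu)$ for every $b\in K$, is not a bare statement in [Ka1] and deserves a sentence: since $b$ is not a pseudo-limit, $v(b-a_\nu)$ stabilizes at some $\beta$ strictly below the increasing values $v(a_{\nu+1}-a_\nu)$, and Kaplansky's estimates for the Hasse derivatives of $f$ along $(a_\nu)$ (his Lemma~8) then force $vf(b)<vf(a_\nu)$ for large $\nu$. Similarly, the claim that a root $c\in K$ of $f$ would be a pseudo-limit of $(a_\nu)$ silently uses that, for $g=f/(X-c)$, the value $vg(a_\nu)$ is ultimately fixed --- which is exactly the minimality of $\deg f$; worth stating.

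The converse direction is where the theorem's real content lies, and there your text is a plan, not a proof: the construction of a pseudo-Cauchy subsequence of algebraic type from the cofinal sequence $(a_\nu)$, and the verification that it has no pseudo-limit in $K$ while the polynomial it carries still witnesses non-maximality, is precisely what Delon's original argument failed to establish and what [Er2] and [Ku8] supply. You name the relevant tools (Hasse-derivative estimates, minimality of $\deg f$) but do not carry out the step. One observation that would help you here: in this direction you may assume $(K,v)$ henselian from the outset, since a non-henselian field is already not algebraically maximal (its henselization is a proper immediate algebraic extension); then the valuation on $\tilde{K}$ is unique and the Newton-polygon analysis of $f(X+a_\nu)$ becomes unambiguous. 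As written, the proposal is a correct high-level strategy whose crucial step is deferred to the references rather than proved.
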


The following related results, also proved in [Ku8], illustrate
again the importance of additive and $p$-polynomials. First,
using Theorem~\ref{mpwe}, we can push the result stated in
Theorem~\ref{extram} even further:
\begin{theorem}                             \label{extram2}
A henselian valued field of characteristic $p>0$
is algebraically maximal if and only if it is
extremal with respect to every $p$-polynomial in one variable.
\end{theorem}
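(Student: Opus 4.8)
The plan is to obtain the two implications from Theorem~\ref{extram} and Theorem~\ref{mpwe} respectively. One direction is immediate: if $(K,v)$ is algebraically maximal then, by Theorem~\ref{extram}, it is extremal with respect to every polynomial in one variable, and a $p$-polynomial is in particular a polynomial in one variable. So the work lies in the converse, which I will prove in contrapositive form: assuming $(K,v)$ is henselian of characteristic $p>0$ and \emph{not} algebraically maximal, I will produce a $p$-polynomial $f\in K[X]$ with respect to which $(K,v)$ is not extremal.

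The first step is a reduction. Since $(K,v)$ is not algebraically maximal, it admits a proper immediate algebraic extension; choosing an element $\alpha$ of such an extension, $(K(\alpha)|K,v)$ is a finite, proper, immediate extension, so by the fundamental equality~(\ref{fundeq}) its degree equals its defect $p^\delta>1$. By Theorem~\ref{ram}, $K^r$ is henselian and $K(\alpha).K^r|K^r$ has the same defect $p^\delta$; since the defect divides the degree and $[K(\alpha).K^r:K^r]\le[K(\alpha):K]=p^\delta$, we get $[K(\alpha).K^r:K^r]=[K(\alpha):K]$, that is, $K(\alpha)|K$ is linearly disjoint from $K^r|K$ and hence purely wild. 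Passing to a minimal subextension $(L|K,v)$ of $(K(\alpha)|K,v)$, this $L|K$ is still immediate and proper, and — linear disjointness from $K^r|K$ being inherited by subextensions — still purely wild, while being minimal by construction. Now Theorem~\ref{mpwe} applies and yields an additive polynomial ${\cal A}(X)\in{\cal O}_K[X]$ and an element $\vartheta\in L$ with $L=K(\vartheta)$ such that $f(X):={\cal A}(X)-{\cal A}(\vartheta)$, a $p$-polynomial over $K$, is the minimal polynomial of $\vartheta$ over $K$; in particular $f$ has no zero in $K$.

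It remains to show that $\{vf(a)\mid a\in K\}$ has no maximum. Henselianity enters through the uniqueness of the prolongation of $v$ to $\tilde K$: every $K$-embedding of $K(\vartheta)$ into $\tilde K$ extends to an automorphism $\tau$ of $\tilde K$ over $K$, and then $v\circ\tau=v$ by uniqueness; taking $\tau$ with $\tau(\vartheta)=\vartheta'$ for a root $\vartheta'$ of $f$, we obtain $v(a-\vartheta')=v(\tau(a-\vartheta))=v(a-\vartheta)$ for every $a\in K$. Writing $f(X)=c\prod_{i=1}^{n}(X-\beta_i)$ over $\tilde K$, with $c$ the leading coefficient, $n=[L:K]$, and the $\beta_i$ the roots of $f$ listed with multiplicity, and using $f(a)\ne0$, this gives $vf(a)=vc+\sum_{i=1}^{n}v(a-\beta_i)=vc+n\cdot v(a-\vartheta)$ for all $a\in K$. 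Hence $\{vf(a)\mid a\in K\}=\{vc+n\gamma\mid\gamma\in T\}$ with $T:=\{v(a-\vartheta)\mid a\in K\}$, and since $\gamma\mapsto vc+n\gamma$ is strictly increasing it is enough to see that $T$ has no maximum. This is the standard fact behind immediate extensions: given $a\in K$, choose $\pi\in K$ with $v\pi=v(a-\vartheta)$ (possible since $v(a-\vartheta)\in vL=vK$); the residue of $(a-\vartheta)/\pi$ lies in $Lv=Kv$, so equals $\bar e$ for some $e\in{\cal O}_K$, and then $a-e\pi\in K$ satisfies $v((a-e\pi)-\vartheta)>v(a-\vartheta)$. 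Therefore $(K,v)$ is not extremal with respect to the $p$-polynomial $f$, which is what we wanted.

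I expect the delicate point to be the reduction in the second paragraph — arranging that the immediate extension is simultaneously minimal \emph{and} purely wild, so that Theorem~\ref{mpwe} can be invoked. The purely wild part rests on the invariance of the defect under base change to $K^r$ supplied by Theorem~\ref{ram}, and one has to check that minimality and linear disjointness from $K^r|K$ both persist upon passing to a subextension. Once Theorem~\ref{mpwe} has delivered the $p$-polynomial $f$, the evaluation of $vf(a)$ is essentially forced: the single point needed is that over a henselian field all conjugates of an algebraic element have the same distance from any element of the base field.
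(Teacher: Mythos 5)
Your proposal is correct, and it follows exactly the route the paper itself indicates: the forward direction is a specialization of Theorem~\ref{extram}, and the converse is driven by Theorem~\ref{mpwe}, which the paper explicitly cites as the tool that "pushes Theorem~\ref{extram} further." Since the paper only states this theorem and refers to [Ku8] for the proof, there is no in-paper argument to compare against line by line, but your reduction (proper immediate algebraic extension $\Rightarrow$ purely wild via the defect-invariance in Theorem~\ref{ram}(f) $\Rightarrow$ minimal purely wild subextension $\Rightarrow$ Theorem~\ref{mpwe}) and the evaluation $vf(a)=vc+n\cdot v(a-\vartheta)$ via henselianity and conjugate symmetry, combined with the standard approximation argument from immediacy, is precisely the expected path and is carried out correctly.
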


A polynomial ${\cal A}\in K[X_1,\ldots,X_n]$ in $n$ variables is called
\bfind{additive} if for all elements $a_1,\ldots,a_n,b_1,\ldots,b_n$ in
any extension field of $K$,
\[{\cal A}(a_1+b_1,\ldots,a_n+b_n)\;=\;{\cal A}(a_1,\ldots,a_n)\,+\,
{\cal A}(b_1,\ldots,b_n)\;.\]
In fact, if ${\cal A}$ is additive then
\[{\cal A}(X_1,\ldots,X_n)\;=\;\sum_{i=1}^{n} {\cal A}_i(X_i)\]
where
\[{\cal A}_i(X_i)\;:=\;{\cal A}(0,\ldots,0,X_i,0,\ldots,0)\]
are additive polynomials in one variable.
As before, a polynomial $f\in K[X_1,\ldots,X_n]$ in $n$ variables is
called \bfind{$p$-polynomial} if it is of the form ${\cal A}+c$ where
${\cal A}\in K[X_1,\ldots,X_n]$ is additive, and $c\in K$. From the
above we see that also every $p$-polynomial is a sum of $p$-polynomials
in one variable.

A valued field is called \bfind{inseparably defectless} if all purely
inseparable extensions have trivial defect. The following is proved in
[Ku8]:

\begin{theorem}                             \label{extrid}
A valued field $(K,v)$ of characteristic $p>0$
is inseparably defectless if and only if it is
extremal with respect to every $p$-polynomial of the form
\begin{equation}
b-\sum_{i=1}^{n} b_iX_i^p\>,\qquad n\in\N,\>b,b_1,\ldots,b_n\in K\;.
\end{equation}
\end{theorem}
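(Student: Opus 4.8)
The plan is to prove both implications by connecting the extremality statement for $p$-polynomials of the given shape with the defectlessness of purely inseparable extensions, using the fundamental equality (\ref{fundeq}) specialized to purely inseparable extensions. First I would record the basic observation that a finite purely inseparable extension of $(K,v)$ of characteristic $p>0$ is generated by a tower of extensions of the form $K(a^{1/p})|K$, so it suffices to understand the behaviour of elements of the form $b_1^{1/p},\ldots,b_n^{1/p}$ and, more precisely, the single-step extension $K(b_1^{1/p},\ldots,b_n^{1/p})|K$. The key algebraic identity is that if $\vartheta_i$ is a $p$-th root of $b_i$ in some extension, then $\vartheta_i$ is a root of $X_i^p-b_i$, and hence a suitable combination $\sum c_i\vartheta_i$ is a root of the one-variable $p$-polynomial obtained from $b-\sum b_iX_i^p$ by the substitution dictated by the coefficients of the minimal polynomial. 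So the value set $\{v(b-\sum b_i a_i^p)\mid a_i\in K\}$ measures how well elements of $K^{1/p}$ can be approximated by elements of $K$ in the valuation topology.

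For the implication ``inseparably defectless $\Rightarrow$ extremal with respect to such $p$-polynomials'', I would argue by contradiction: if the value set (\ref{defextr}) for $f=b-\sum b_iX_i^p$ has no maximum, then there is a sequence $(a^{(k)})$ in $K^n$ along which $v(b-\sum b_i (a_i^{(k)})^p)$ strictly increases without bound (or approaches a non-attained supremum). Passing to $p$-th roots and using that $x\mapsto x^{1/p}$ is an isomorphism of $(K^{1/p},+)$ onto itself that multiplies values by $1/p$, one sees that the element $\gamma:=(b-\sum b_i X_i^p)$ evaluated symbolically forces a proper immediate purely inseparable extension: concretely, the extension $K(\beta)|K$ where $\beta$ is chosen as an appropriate $p$-th root witnesses non-trivial defect because the increasing values show $v\beta\in vK$ and $\beta v\in Kv$ while $\beta\notin K$. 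This contradicts inseparable defectlessness. The care needed here is to reduce the general $p$-polynomial $b-\sum b_iX_i^p$ to a single $p$-th root extension; this is done by induction on $n$, at each stage absorbing one variable and replacing $(K,v)$ by a finite purely inseparable extension, which is again inseparably defectless (a standard transitivity-of-defect argument via (\ref{fundeq})).

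For the converse, ``extremal with respect to such $p$-polynomials $\Rightarrow$ inseparably defectless'', I would start from an arbitrary finite purely inseparable extension $L|K$ with non-trivial defect and derive a violation of extremality. Again reduce to the case $L=K(b^{1/p})$ with $b^{1/p}\notin K$ but $v$ and the residue extension trivial on $L|K$; then $[L:K]=p$ and the defect is $p$. The condition that the residue and value extensions are trivial means precisely that $b^{1/p}$ can be approximated arbitrarily well in $(K,v)$ by elements of the form ``$K$-linear combination of $1$'', i.e.\ there are $a^{(k)}\in K$ with $v(b^{1/p}-a^{(k)})\to\infty$; raising to the $p$-th power and using additivity of $X^p$ in characteristic $p$ gives $v(b-(a^{(k)})^p)\to\infty$, so the value set $\{v(b-a^p)\mid a\in K\}$ is unbounded and in particular has no maximum. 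This contradicts extremality with respect to the one-variable $p$-polynomial $b-X^p$. I expect the main obstacle to be the bookkeeping in the reduction from $n$ variables and a tower of purely inseparable steps to a single $p$-th root, and in checking that inseparable defectlessness (resp.\ the extremality scheme) is preserved under the finite purely inseparable base changes used in the induction — this rests on the Lemma of Ostrowski and the multiplicativity of the defect in towers, which I would state and use explicitly.
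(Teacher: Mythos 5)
Your key observation is correct and is the right starting point: in characteristic $p$ one has $\sum_i b_i a_i^p=(\sum_i b_i^{1/p}a_i)^p$, so $v(b-\sum_ib_ia_i^p)=p\,v(b^{1/p}-\sum_ib_i^{1/p}a_i)$, and extremality with respect to $b-\sum_ib_iX_i^p$ is precisely the statement that the distance from $b^{1/p}$ to the $K$-submodule $M:=\sum_iKb_i^{1/p}$ of $K^{1/p}$ is attained. (Your gloss that the value set measures ``how well elements of $K^{1/p}$ can be approximated by elements of $K$'' is off: the approximating set is the $K$-subspace $M$ of $K^{1/p}$, not $K$.) But both passages between this distance statement and defect have genuine gaps. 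In the direction ``inseparably defectless $\Rightarrow$ extremal'', your contradiction step asserts that non-attainment of the maximum forces a proper immediate extension $K(\beta)|K$ for ``an appropriate $p$-th root $\beta$''; for $n>1$ there is no single $p$-th root to point to, since $M$ is a $K$-subspace, not a subfield. What is actually needed is the valued-vector-space fact that a finite defectless purely inseparable extension $L=K(b^{1/p},b_1^{1/p},\dots,b_n^{1/p})$ admits a valuation basis over $K$, that every $K$-subspace of $L$ then has a valuation basis extending to one of $L$, and that from this the distance of $b^{1/p}$ to $M$ is attained by reading off the component of $b^{1/p}$ outside $M$. Your ``induction on $n$, absorbing one variable'' alternative does not close this gap, because passing from $K$ to $K(b_n^{1/p})$ replaces $M$ by a different and larger module over a different base ring, so the value set being maximized changes.

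In the converse direction, the reduction ``to the case $L=K(b^{1/p})$ immediate of degree $p$'' is not available over $K$ itself. Towering $L|K$ into degree-$p$ steps and invoking multiplicativity of the defect gives an immediate step $K_{j+1}=K_j(\beta)$ of degree $p$, but the base is $K_j\neq K$ and $\beta^p\in K_j$ rather than $K$; the one-variable polynomial $\beta^p-X^p$ then has its coefficient outside $K$ and is not one of your test polynomials. To repair this one must first arrange $L\subseteq K^{1/p}$, which requires the separate lemma that $(K,v)$ is inseparably defectless if and only if every finite subextension of $K^{1/p}|K$ is defectless (provable from the valued-field isomorphism $(K^{1/p},v)\cong(K,v)$ given by the Frobenius together with multiplicativity of the defect). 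Once $L\subseteq K^{1/p}$, each $K_j$ sits in $K^{1/p}$, has a $K$-basis $\beta_1,\dots,\beta_N$ with $\beta_i^p\in K$, and non-attainment of $\dist(\beta,K_j)$ becomes non-attainment for the value set of $\beta^p-\sum_i\beta_i^pX_i^p$ over $K$, which is of the required form. Your alternative remedy, that the extremality scheme is preserved under finite purely inseparable base change, is asserted but not proved and does not follow from Ostrowski's Lemma or multiplicativity of the defect: a single $p$-polynomial condition over $K(d^{1/p})$ unwinds, after expanding coefficients and $p$-th powers in the basis $1,d^{1/p},\dots,d^{(p-1)/p}$, into a simultaneous system of $p$-polynomial conditions over $K$, and extremality of each individually does not yield extremality for the maximum of their minimum. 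Finally, a minor inaccuracy: immediacy of $K(b^{1/p})|K$ gives only that $\{v(b^{1/p}-a):a\in K\}$ has no maximum, not that the values tend to $\infty$ or are unbounded; only non-attainment is needed, so the conclusion survives, but the intermediate claim is wrong as stated.
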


Observe that again, all of these notions can be axiomatized by recursive
elementary axiom schemes.

\pars
I will now sketch the basic idea of the proof of Theorem~\ref{notc}.
Note that the image of a polynomial $f$ on a valued field $K$ has
the {\it optimal approximation property} in the sense of [Ku4] and
[Dr--Ku] if and only if $K$ is extremal with respect to $f-c$ for every
$c\in K$. Consequently,
\sn
{\it the images of all additive polynomials over $(K,v)$ have the
optimal approximation property if and only if $K$ is extremal with
respect to all $p$-polynomials over $K$.}
\sn
This holds in one variable as well as in several variables.

\pars
In [Ku4], I considered the following additive polynomial over
$\F_p((t))$:
\begin{equation}
X_0^p-X_0\,+\,tX^p_1\,+\,\ldots\,+t^{p-1}X^p_{p-1}\;.
\end{equation}
I showed that the image of this polynomial has the optimal approximation
property in $\F_p((t))$. Then I constructed an extension $(L,v)$ of
$\F_p((t))$ of transcendence degree $1$ which is henselian, defectless,
has value group a $\Z$-group, with $vt$ the smallest positive element,
and residue field $\F_p\,$, but the image of the above polynomial does
not have the optimal approximation property in $(L,v)$. This shows that
$\F_p((t))$ with its $t$-adic valuation is not an elementary
substructure of $(L,v)$. This yields Theorem~\ref{notc}.

Further, I proved in [Ku4] that in all maximal fields, the images of all
additive polynomials which satisfy a certain elementary condition have
the optimal approximation property. Maximal fields are interesting
objects in the model theory of valued fields because all maximal
immediate extensions of a valued field are maximal. So if we are
considering an elementary class of valued fields closed under maximal
immediate extensions (so far, this is the case for all classes of valued
fields without additional structure that play a role in model theory),
and if the Ax--Kochen--Ershov principle (\ref{AKEeq}) holds for this
class, then every field in the class should be elementarily equivalent
to all of its maximal immediate extensions. Therefore, the following
question is very important:

\begin{problem}
Is every maximal field of characteristic $p>0$
extremal with respect to every $p$-polynomial
in several variables? Is every maximal field extremal?
\end{problem}

Since every maximal field is algebraically maximal, Theorem~\ref{extram}
shows that it is at least extremal with respect to every polynomial in
one variable. To answer the first question to the affirmative, one would
have to eliminate the condition in the result mentioned above.

In [Ku4], I also construct an immediate function field $(F,v)$ of
transcendence degree $1$ over $(L,v)$ such that $(L,v)$ is not
existentially closed in $(F,v)$. Any maximal immediate extension
$(M,v)$ of $(F,v)$ is also a maximal immediate extension of $(L,v)$.
Since $(L,v)$ is not existentially closed in $(F,v)$, it is not
existentially closed in $(M,v)$. So it is not an elementary substructure
of $(M,v)$, and it cannot lie in an elementary class which has the
good properties discussed above.

The function field $F$ is generated over $L$ by two elements $x_0,x_1$
which satisfy an equation
\[x\;=\; x_0^p - x_0 + tx_1^p\]
where $x$ is an element in $L$ which is transcendental over $\F_p((t))$.
So the existential sentence
\[\exists X_0\exists X_1:\; x\;=\; X_0^p - X_0 + tX_1^p\]
holds in $F$. On the other hand, $L$ is constructed in such a way that
this sentence does not hold in $L$. This proves that $L$ is not
existentially closed in $F$ and, a fortiori, $(L,v)$ is not
existentially closed in $(F,v)$.

The function field $F|L$ shows the following interesting symmetry
between a generating Artin-Schreier extension and a generating purely
inseparable extension of degree $p$. On the one hand, we have the
Artin-Schreier extension
\[L(x_0,x_1)|L(x_1)\]
given by
\begin{equation}               \label{spc}
x_0^p - x_0 \>=\> x - tx_1^p\;.
\end{equation}
On the other hand we have the purely inseparable extension
\[L(x_0,x_1)|L(x_0)\]
given by
\[x_1^p \>=\> \frac{1}{t}(-x_0^p + x_0 + x)\;.\]
From equation (\ref{spc}) it is immediately clear that the function
field $L(x_0,x_1)$ becomes rational after a constant field extension by
$t^{1/p}$; namely
\[F(t^{1/p}) \>=\> L(t^{1/p})(x_0+t^{1/p}x_1)\;.\]
This shows that the base field $L$, not being existentially closed in
the function field $F$, becomes existentially closed in the function
field after a finite purely inseparable constant extension, although
this extension is linearly disjoint from $F|L$.

\pars
In our above example there also exists a separable constant field
extension $L'|L$ of degree $p$ such that $(F.L')^h$ is henselian
rational. To show this, we take a constant $d \in L$ and an element $a$
in the algebraic closure of $L$ satisfying
\[t \>=\> a^p -da\;,\]
and we put $L' = L(a)$. If we choose $d$ with a sufficiently high value,
then we will have that $vdax_1^p>0$. From this we deduce by Hensel's
Lemma that there is an element $b\in L'(x_1)^h$ such that $b^p - b =
-dax_1^p$. If we put $z = x_0 + ax_1 +b \in L'(x_0,x_1)^h$, we get that
\[z^p - z \>=\> x - tx_1^p + a^px_1^p -ax_1 - dax_1^p
\>=\> x - ax_1 + (a^p - da - t)x_1^p \>=\> x - ax_1\;,\]
which shows that
\[x_1 \>\in\> L'(z)\;.\]
This in turn yields that $b\in L'(z)^h$ and consequently,
\[x_0 \>=\> z -ax_1 -b\in L'(z)^h\;.\]
Altogether, we have proved that
\[L'(x_0,x_1)^h \>=\> L'(z)^h\]
is henselian rational.

\pars
Let us discuss one more problem about the model theory of $\F_p((t))$
that becomes visible through our above example. It can be shown that for
every $k\in\N$, the sentence
\[\forall X\exists X_0,\ldots,X_{p^k-1}\,,Y:\; X\>=\>X_0^{p^k}-X_0+
tX_1^{p^k}+\,\ldots\,+t^{p^k-1}X^{p^k}_{p^k-1}\,+\,Y\>\wedge\>vY\geq 0\]
holds in $\F_p((t))$ as well as in every maximal field which satisfies
axiom system (\ref{AS1}). On the other hand, given any $n\in\N$, the
construction of $(L,v)$ can be modified in such a way that for some $k$
the above sentence does not hold in $(L,v)$ and that the smallest
extension of $(L,v)$ within any maximal immediate extension in which
that sentence holds is at least of transcendence degree $n$ over $L$.
This is in drastic contrast to the tame behaviour shown by tame fields:
\sn
{\it If $(M|K,v)$ is an immediate extension, $(M,v)$ is a tame field and
$K$ is relatively algebraically closed in $M$, then also $(K,v)$ is a
tame field.}
\sn
(For a proof, see [Ku11].) This property of tame fields is used in an
essential way in the proof of Theorem~\ref{mttame} in order to reduce to
immediate extensions of transcendence degree 1 (so that Theorem~\ref{HR}
can be applied). Apparently, in the case of fields elementarily
equivalent to $\F_p((t))$, we have to succeed without this tool.

\parm
For the construction of the field $L$, I needed a handy criterion for
defectless fields. The following is proved in [Ku8]:
\begin{theorem}                             
A valued field of positive characteristic is henselian and defectless
if and only if it is separable-algebraically maximal and inseparably
defectless.
\end{theorem}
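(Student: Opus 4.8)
The plan is to prove the two implications separately; the forward one is essentially formal, and all the work sits in the reverse one.

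\textbf{The implication ``henselian and defectless $\Rightarrow$ separable-algebraically maximal and inseparably defectless''.} Here inseparable defectlessness is immediate, since purely inseparable extensions are a special case of finite extensions. For separable-algebraic maximality, suppose $(K,v)$ admitted a proper immediate separable-algebraic extension; pick $\alpha$ in it with $\alpha\notin K$ and set $L_0=K(\alpha)$, a proper finite separable immediate extension of $K$. Since $K$ is henselian, ${\rm g}=1$ in (\ref{fundineq}), so the Lemma of Ostrowski gives the equality (\ref{fundeq}); but $L_0|K$ is immediate, so $(vL_0:vK)=1=[L_0v:Kv]$, forcing $[L_0:K]=p^\delta$ with $\delta\geq 1$, i.e.\ a nontrivial defect — contradicting defectlessness.

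\textbf{The reverse implication: setup and reduction.} Assume $(K,v)$ is separable-algebraically maximal and inseparably defectless. First, $K$ is henselian, because the henselization $(K,v)^h$ is an immediate separable-algebraic extension of $(K,v)$, hence must be trivial. It remains to show $d(L|K)=1$ for every finite $L|K$. Since $K$ and every algebraic extension of it are henselian, ${\rm g}=1$ throughout and the defect is multiplicative in towers; moreover $d(L|K)$ divides the defect of the normal closure of $L|K$, so we may assume $L|K$ is normal. Then $L=L_s\cdot L_i$ with $L_s|K$ the maximal separable subextension (which is Galois) and $L|L_s$ purely inseparable, and along $K\subseteq L_s\subseteq L$ we get $d(L|K)=d(L|L_s)\cdot d(L_s|K)$. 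Thus it suffices to prove: (I) $(K,v)$ is separably defectless (every finite separable extension has trivial defect), which gives $d(L_s|K)=1$; and (II) the finite separable extension $L_s|K$ is again inseparably defectless, which gives $d(L|L_s)=1$.

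\textbf{Carrying out (I) and (II).} For (I), suppose some finite separable $L'|K$ has $d(L'|K)>1$. By the ramification-theoretic facts that $d(L'|K)=d(L'.K^r|K^r)$, that every finite subextension of $K^r|K$ is defectless (Theorem~\ref{ram}), and that $K\sep|K^r$ is a $p$-extension — so that $L'.K^r|K^r$ is a tower of Galois extensions of degree $p$, i.e.\ of Artin-Schreier extensions (Theorem~\ref{GalpAS}) — multiplicativity of the defect produces a degree-$p$ Artin-Schreier extension with nontrivial, hence maximal, defect; being of degree $p$ with defect $p$, such an extension is immediate. A descent argument, built on the structure theory of immediate algebraic extensions of henselian fields (pseudo-Cauchy sequences of algebraic type in the sense of Ostrowski and Kaplansky, together with the additivity normal forms for Artin-Schreier extensions used throughout this paper, cf.\ the discussion around Theorem~\ref{mpwe}), then manufactures a proper immediate separable-algebraic extension of $K$ itself, contradicting separable-algebraic maximality. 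For (II), one controls the data $vL_s^{1/p^n}/vL_s$ and the residue extension of $L_s^{1/p^n}|L_s$ by the corresponding data for $K^{1/p^n}|K$, using that $L_s|K$ is finite separable and that $K$ is inseparably defectless.

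\textbf{Where the difficulty lies.} I expect the descent step in (I) to be the main obstacle: translating the defect of an arbitrary finite separable extension — which may be ``invisible'' because that extension need not itself be immediate (it may contribute to $(vL':vK)$ or to $[L'v:Kv]$ as well) — into an honest proper immediate separable-algebraic extension of the base field. This is exactly the point where the structure theory of immediate algebraic extensions over henselian fields is indispensable, and where the hypothesis of inseparable defectlessness is genuinely used, both to keep the descent within the separable world and to exclude the complementary purely inseparable defect. By contrast, the reduction of (I) to the Artin-Schreier case via Theorem~\ref{ram}, and the bookkeeping in (II), are comparatively routine.
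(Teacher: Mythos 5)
The paper does not prove this theorem: it attributes it to [Ku8] and merely tells us what the proof is built on, namely ``a classification of Artin-Schreier extensions with non-trivial defect according to whether they can be obtained as a deformation of a purely inseparable extension with non-trivial defect, or not.'' That classification is precisely the ingredient your sketch lacks, and its absence is what leaves a genuine gap in step (I).

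Your forward direction is correct, and your skeleton for the converse is right as far as it goes: henselianity from separable-algebraic maximality via the henselization, reduction to normal extensions by multiplicativity of the defect, and the splitting $L=L_s\cdot L_i$ that reduces everything to (I) and (II). But the crux of (I) is the descent, and there you only gesture at a toolbox (``pseudo-Cauchy sequences of algebraic type, additivity normal forms'') without saying how the descent is actually performed. The difficulty is not just technical: once you have produced an immediate Artin-Schreier defect extension over some intermediate field $M$ with $K^r\subseteq M\subseteq L_s.K^r$, it is not determined in advance \emph{which} of your two hypotheses on $K$ it should contradict. The dependent/independent dichotomy of Artin-Schreier defect extensions is exactly what decides this: a ``dependent'' one is a deformation of a purely inseparable defect extension and therefore forces a failure of inseparable defectlessness of $K$, while an ``independent'' one can be brought down to an honest immediate separable-algebraic extension of $K$, contradicting separable-algebraic maximality. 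Without that case split, there is no reason an Artin-Schreier defect over $K^r$ should descend to an immediate separable extension of $K$ at all; it might instead only signal a purely inseparable defect, and your argument as written cannot tell the two apart. (Step (II) is also not as ``routine'' as claimed: that a finite separable extension of an inseparably defectless henselian field is again inseparably defectless requires an argument, and [Ku8] in fact draws on the same circle of ideas there.) So the proposal has the right outline and correctly diagnoses where the hard work is, but it does not contain the key idea that makes the hard step go through.
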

\n
The proof uses a classification of Artin-Schreier extensions with
non-trivial defect according to whether they can be obtained as a
deformation of a purely inseparable extension with non-trivial defect,
or not (cf.\ [Ku8]). This classification is also of independent
interest. For instance, S.~D.~Cutkosky and O.~Piltant [Cu--Pi] give an
example of a tower of two Artin-Schreier extensions with non-trivial
defect of a rational function field in which a certain form of
``relative resolution'' fails. It would be interesting to know whether
such properties depend on the classification. In [Ku6], valued
rational function fields are constructed which allow an infinite tower
of Artin-Schreier extensions with non-trivial defect, but it is not
clear whether one can obtain both sorts of extensions. The
classification may also be important for the characterization of all
valued fields whose maximal immediate extensions are finite (cf.\ [V]
for the background).

%
%
\subsection{But what about extremality for all polynomials?}
Let us come back to the question whether every maximal field is
extremal. We know the answer in the case of discrete valued fields:

\begin{theorem}
If $(K,v)$ is a henselian defectless field with value group isomorphic
to $\Z$, then $(K,v)$ is extremal.
\end{theorem}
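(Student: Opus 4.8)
The plan is to reduce the statement to the one-variable case handled by Theorem~\ref{extram}, using that a henselian defectless field with value group $\Z$ is in particular algebraically maximal, and then to leverage discreteness to pass from one variable to arbitrarily many. First I would note that $(K,v)$ is algebraically maximal: by (\ref{fundeq}) any proper immediate algebraic extension would have defect equal to its degree, contradicting defectlessness; hence by Theorem~\ref{extram}, $(K,v)$ is extremal with respect to every polynomial in one variable. The real content is therefore the extension from one variable to $n$ variables, and this is where the hypothesis $vK\cong\Z$ must do the work — recall that for more general value groups even maximal fields are not known to be extremal, so the argument genuinely cannot be purely formal.

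The key idea for the induction on $n$ is that the set in (\ref{defextr}) is a subset of $\Z\cup\{\infty\}$, and that along any one coordinate, with the remaining coordinates fixed, extremality in one variable (applied over $K$ with parameters) gives a maximum. So given $f\in K[X_1,\dots,X_n]$, I would consider, for fixed $a_2,\dots,a_n\in K$, the one-variable polynomial $f(X_1,a_2,\dots,a_n)$; by the one-variable case there is a value $\gamma(a_2,\dots,a_n)\in\Z\cup\{\infty\}$ that is the maximum of $vf(a_1,a_2,\dots,a_n)$ over $a_1\in K$. The question is whether $\sup_{a_2,\dots,a_n}\gamma(a_2,\dots,a_n)$ is attained. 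Naively iterating the one-variable case does not immediately close the induction, because the "partial maximum" $\gamma$ is not obviously the value of a polynomial in fewer variables. The way around this is a compactness/saturation argument: the failure of extremality with respect to $f$ would produce, in a sufficiently saturated elementary extension $(K^*,v^*)$, an element realizing a value strictly above every element of the set (\ref{defextr}) taken in $K$; but $(K^*,v^*)$ is again henselian (elementary extensions of henselian fields are henselian), and one shows it is again algebraically maximal and that its value group, being elementarily equivalent to $\Z$, is a $\Z$-group, so that one can still run the one-variable extremality argument there and derive a contradiction with discreteness by a descending-chain or minimal-counterexample consideration on the finitely many "leading" data of $f$.

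Concretely, I expect the cleanest route is: (i) reduce to $(K,v)$ henselian, defectless, $vK=\Z$, and — after replacing $K$ by an elementary extension if convenient — assume sufficient saturation; (ii) argue by induction on $n$, at each stage fixing all but one variable and invoking Theorem~\ref{extram} to maximize in that variable, obtaining a function $\gamma$ on $K^{n-1}$ whose sup one must attain; (iii) show that $\gamma$ takes only finitely many values on each fiber of a suitable "Newton-polygon-type" stratification of $K^{n-1}$ coming from the monomials of $f$ — this uses that $vK=\Z$ so there is no room for an infinite strictly increasing sequence bounded above unless it stabilizes — and hence that the sup is a max. The main obstacle, and the step I would spend the most care on, is exactly (iii): making precise the claim that discreteness of the value group forces the partial maxima $\gamma(a_2,\dots,a_n)$ to stabilize, i.e. that one cannot have $\gamma$ unbounded while each individual value is finite, without producing a proper immediate extension that contradicts defectlessness. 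I would handle this by a saturation argument as above: in an $|K|^+$-saturated elementary extension $(K^*,v^*)$, an unbounded approaching sequence for (\ref{defextr}) would be realized by a single pseudo-limit, and then the one-variable extremality of $(K^*,v^*)$ together with $v^*K^*$ being a $\Z$-group yields the contradiction. Everything else (the base case, the henselian/algebraically-maximal bookkeeping, the reduction to finitely generated data) is routine.
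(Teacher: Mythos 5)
The paper does not prove this theorem at all: it simply cites two existing proofs, Delon's deduction from Greenberg's work on rational points in henselian discrete valuation rings \fvklit{Gre}, and an ``elegant model theoretic proof'' of Ershov. So your proposal would be new content, and it needs to actually close, which it doesn't.

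Your plan is to induct on the number of variables, fixing all but one coordinate, applying Theorem~\ref{extram}, and then using discreteness plus a saturation argument to show that the resulting partial maxima $\gamma(a_2,\ldots,a_n)$ stabilize. The gap is exactly step (iii), and you acknowledge it, but it is not a gap that a bit more care will fill; the strategy itself is the problem. First, passing to an $|K|^+$-saturated elementary extension $(K^*,v^*)$ destroys the very hypothesis the theorem rests on: $v^*K^*$ is a $\Z$-group with elements greater than all of $vK\cong\Z$, so it is not isomorphic to $\Z$, and neither the statement being proved nor any lower-$n$ inductive case applies to $(K^*,v^*)$. Second, and more fundamentally, the one-variable case (Theorem~\ref{extram}) is proved by turning a sequence of approximate zeros of a one-variable polynomial into a pseudo Cauchy sequence of algebraic type and hence into an immediate algebraic extension whose existence contradicts algebraic maximality. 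That construction is intrinsically one-variable; it does not produce an immediate extension from a tuple $a^*\in (K^*)^n$ with $v^*f(a^*)$ infinitely large. In fact the paper warns explicitly against your route: ``the case of several variables is very much different from the case of one variable, and there is not much hope of treating it by induction on the number $n$ of variables starting with $n=1$,'' and ``A similar procedure is \emph{not} known for the case of several variables.'' So the induction step you sketch is not a routine saturation argument waiting to be written out; it is an open-looking obstruction, and the known proofs avoid it entirely by going through Greenberg-type strong approximation bounds for systems in several variables rather than reducing to the one-variable extremality criterion.

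Your observation that discreteness of $\Z$ rules out a finite supremum being unattained is correct and worth keeping: with $vK\cong\Z$, failure of extremality for $f$ means precisely that $f$ has no zero in $K^n$ while $vf$ is unbounded above on $K^n$. That reformulation is exactly the ``strong approximation'' statement that Greenberg's theorem addresses, and it is the right place to start. But the bridge from there to the conclusion has to come from a several-variable lifting theorem (Greenberg) or a genuinely model-theoretic argument tailored to the several-variable situation (Ershov), not from iterating the one-variable extremality theorem.
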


In [Del], Delon deduced this from the work of Greenberg [Gre]. An
elegant model theoretic proof was given by Ershov. The theorem implies
that in particular, $(\F_p((t)),v_t)$ is extremal. It also implies that
every henselian defectless field with value group isomorphic
to $\Z$ is extremal with respect to all $p$-polynomials in several
variables. An alternative proof for this fact can be found in [Dr--K].
It uses the local compactness of $\F_p((t))$. If this could be
eliminated in the case of maximal fields, we could at least prove that
every maximal field is extremal with respect to all $p$-polynomials in
several variables. This generates the following question:

\begin{problem}
If a henselian field of characteristic $p>0$ is extremal with respect to
all $p$-polynomials in several variables, does this imply that it is
extremal?
\end{problem}

Are $p$-polynomials representative for all polynomials when extremality
is concerned? Theorem~\ref{extram2} indicates that modulo henselization
this is true for polynomials in one variable. But can we associate
directly to every polynomial in one variable a $p$-polynomial in one
variable from which we can read off information about extremality? A
result of Kaplansky ([Ka1], Lemma~10), originally proved to be used in
the construction of one of the counterexamples to embeddability in power
series fields, shows that this can be done over every henselian field
with archimedean value group. Using technical machinery developed in
[Ku1], this result can be generalized (the proof is implicit in [Ku1]):

\begin{proposition}                         \label{addprepr}
Let $(K,v)$ be a henselian field and $(a_\rho)_{\rho<\lambda}$ a pseudo
Cauchy sequence in $K$ without a limit in $K$. Pick a polynomial $f$ of
minimal degree such that the value $vf(a_\rho)$ is not ultimately fixed.
Then there is an additive polynomial ${\cal A}\in K[X]$ such that for
all large enough $\rho$,
\[v(f(a_\rho)\,-\,{\cal A}(a_\rho))\> >\>vf(a_\rho)\]
(which in particular implies that $vf(a_\rho)=v{\cal A}(a_\rho)$).
\end{proposition}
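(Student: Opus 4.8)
The plan is to extract $\mathcal{A}$ from the structure of the immediate algebraic extension of $K$ generated by a pseudo limit of $(a_\rho)$, and to identify it with a suitable element of the ideal of additive polynomials vanishing at that limit. By the choice of $f$, the value $vf(a_\rho)$ is ultimately strictly increasing (while $vg(a_\rho)$ is ultimately constant for every $g\in K[X]$ of smaller degree), $f$ is irreducible (a proper factorization would contradict minimality), and $(a_\rho)$ is of algebraic type, pseudo-converging to a root $z$ of $f$ in $\tilde K$; moreover $\deg f\geq 2$, since a linear $f$ would exhibit a limit of $(a_\rho)$ in $K$. As $(K,v)$ is henselian, $v$ extends uniquely to $K(z)$, the extension $K(z)|K$ is immediate, and by the fundamental equality~(\ref{fundeq}) its degree equals a power $p^\delta$ of $p$. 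Expanding $f(X)=f(a_\rho)+\sum_{j\geq1}\partial_j f(a_\rho)(X-a_\rho)^j$ in powers of $X-a_\rho$ and letting $e$ be the least $j\geq1$ with $\partial_j f\not\equiv0$, the key combinatorial observation is that $e$ is a power of $p$: by Lucas' theorem the least $i\geq1$ with $p\nmid\binom{n}{i}$ is $p^{v_p(n)}$, so $e=p^s$ with $p^s$ the largest $p$-power dividing all exponents appearing in $f$. Since $(X-Y)^{p^s}=X^{p^s}-Y^{p^s}$ in characteristic $p$, the leading term $\partial_e f(a_\rho)(X-a_\rho)^{p^s}$ of $f(X)-f(a_\rho)$ is additive in $X$ up to a constant; this is the mechanism that forces an additive approximant to appear, and it is the precise analogue, for a general $f$, of the way the Artin--Schreier polynomial $X^p-X$ is exploited in the proof of Theorem~\ref{HR}.

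Write $f(X)=g(X^{p^m})$ with $g$ separable irreducible (so $m=s$). If $\deg g=1$, then $f(X)=X^{p^m}-w$ with $w\in K^\times$, and one checks directly that $\mathcal{A}(X):=X^{p^m}-w^{1-p}X^{p^{m+1}}$ works: indeed $f(X)-\mathcal{A}(X)=w^{1-p}f(X)^p$, so $v(f(a_\rho)-\mathcal{A}(a_\rho))=(1-p)vw+p\,vf(a_\rho)>vf(a_\rho)$ as soon as $vf(a_\rho)>vw$, which holds for large $\rho$ because $vf(a_\rho)=p^m v(a_\rho-z)$ while the ultimate constancy of $va_\rho$ forces $v(a_\rho-z)>vz$ eventually. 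If $\deg g\geq2$, then $(a_\rho^{p^m})$ is again a pseudo Cauchy sequence with no limit in $K$ for which $g$ has minimal degree, and any solution $\mathcal{A}_g$ for the pair $(a_\rho^{p^m}),g$ yields $\mathcal{A}(X):=\mathcal{A}_g(X^{p^m})$ for $(a_\rho),f$; so we may assume $f$ separable of degree $p^\delta\geq2$ and $z$ a simple root. Now the additive polynomials over $K$ vanishing at $z$ form a nonzero left ideal of the left principal ideal domain $K[\varphi]$ (Theorem~\ref{Kphi}), hence equal $K[\varphi]\mathcal{C}_0$ for a monic $\mathcal{C}_0$ of minimal degree; one shows that $\mathcal{C}_0$ has nonzero linear coefficient, so that $z$ is a simple root of $\mathcal{C}_0$ and $h(z):=(\mathcal{C}_0/f)(z)=\mathcal{C}_0'(z)/f'(z)\in K^\times$, and then $\mathcal{A}:=h(z)^{-1}\mathcal{C}_0\in K[X]$ is an additive multiple of $f$ with $(\mathcal{A}/f)(z)=1$. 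From $f(a_\rho)-\mathcal{A}(a_\rho)=f(a_\rho)\bigl(1-(\mathcal{A}/f)(a_\rho)\bigr)$ and $(\mathcal{A}/f)(a_\rho)\to 1$ one gets $v(f(a_\rho)-\mathcal{A}(a_\rho))>vf(a_\rho)$; the clean prototype of this last step is the identity $f-\mathcal{A}=uf^p$ for a unit $u\in K^\times$, valid literally whenever $f=\mathcal{B}(X)-c$ with $\mathcal{B}$ additive (e.g.\ $f=X^{p^m}-w$ as above, or an Artin--Schreier $f=X^p-X-c$, for which $\mathcal{A}=-c^{1-p}X^{p^2}+(c^{1-p}+1)X^p-X$ and $f-\mathcal{A}=c^{1-p}f^p$).

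The two assertions flagged above — that $\mathcal{C}_0$ has nonzero linear term, and that $(\mathcal{A}/f)(a_\rho)\to1$ rather than merely $v\mathcal{A}(a_\rho)=vf(a_\rho)$ — are exactly the points where the theory of approximation types of [Ku1] is indispensable. They rest on the structure of the purely wild extension $K(z)|K$: by general ramification theory it is a tower of degree-$p$ purely wild layers, each generated by a root of a $p$-polynomial $\mathcal{B}_i(X)-c_i$ with $\mathcal{B}_i$ additive over $\mathcal{O}_K$ (Pop's Theorem~\ref{mpwe}), and the finiteness of the support of the approximation type of $z$ over $K$ controls this tower and pins $\mathcal{A}$ down inside the ideal $K[\varphi]\mathcal{C}_0$. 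The main obstacle is precisely this: aligning the generators furnished by Theorem~\ref{mpwe} with the sequence $(a_\rho)$ — legitimate because translating a generator by an element of $K$ only changes the constant in $\mathcal{B}_i(X)-\mathcal{B}_i(\vartheta_i)$ — and proving that the layer-by-layer construction terminates. A naive stripping of leading terms inflates the degrees of the auxiliary additive polynomials and, since $v(a_\rho-z)$ need not be cofinal in $vK$, can yield the inequality with the wrong sign; the combinatorial control from [Ku1] is what rules this out. The purely inseparable case $\deg g=1$ is the only part of the argument that is fully self-contained.
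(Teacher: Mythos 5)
The paper itself contains no proof of Proposition~\ref{addprepr}: it is stated as a generalization of Kaplansky's Lemma~10 whose proof is said to be implicit in [Ku1], so your argument has to be judged on its own terms. Your purely inseparable case and the reduction $f(X)=g(X^{p^m})$ are correct (the assertions that $(a_\rho^{p^m})$ has no limit in $K$ and that $g$ is of minimal degree for it follow because a limit $b$ would make $X^{p^m}-b$, and a smaller $h$ would make $h(X^{p^m})$, polynomials of degree $<\deg f$ with non-fixed value). The separable case, however, is not a proof, and it contains one outright false step: writing ${\cal C}_0=fh$, you claim $h(z)={\cal C}_0'(z)/f'(z)\in K^\times$ and set ${\cal A}:=h(z)^{-1}{\cal C}_0\in K[X]$. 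But ${\cal C}_0'(z)$ is the linear coefficient of ${\cal C}_0$, which lies in $K$, whereas $f'(z)$ lies in $K(z)$ and is in $K$ essentially only when $f$ is itself a $p$-polynomial. For example, let $K(\vartheta)|K$ be an Artin-Schreier defect extension, $\vartheta^p-\vartheta=a$, $p>2$, and $z=\vartheta^2$; then $z$ is a limit of a pseudo Cauchy sequence in $K$ without limit in $K$ for which the degree-$p$ minimal polynomial $f$ of $z$ is of minimal degree, and $f'(z)$ is a nonzero scalar multiple of $\vartheta^{p-1}-1\notin K$, so $h(z)\notin K$ and your ${\cal A}$ does not have coefficients in $K$. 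Immediacy only lets you approximate $h(z)^{-1}$ by elements of $K$, and deciding how much precision suffices is precisely the open point.

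Second, and more fundamentally, the decisive inequality --- that $({\cal A}/f)(a_\rho)\to 1$, i.e.\ that $v(h(a_\rho)-h(z))$ eventually exceeds $vh(z)$ --- is exactly what you delegate to the machinery of [Ku1]. Since the distances $\gamma_\rho=v(a_\rho-z)$ need not be cofinal in $vK$, nothing in your text excludes that $\min_j(v\partial_jh(z)+j\gamma_\rho)$ stays $\leq vh(z)$ for all $\rho$; this is where henselianity must do real work (so far you use it only for uniqueness of the extension of $v$), and it is not even clear that the sought ${\cal A}$ should be found inside the annihilator ideal $K[\varphi]{\cal C}_0$ at all. By contrast, your other flagged claim does not need [Ku1]: for $z$ separable over $K$, the minimal additive polynomial automatically has nonzero linear coefficient, because $\prod_{u\in U}(X-u)$, where $U$ is the $\Fp$-span of the conjugates of $z$, is by Corollary~\ref{charaddpol}~a) an additive polynomial over $K$ with simple roots vanishing at $z$, and it divides every additive polynomial over $K$ vanishing at $z$, hence equals ${\cal C}_0$ up to a scalar. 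Finally, the proposed alignment with Pop's Theorem~\ref{mpwe} by translating generators by elements of $K$ works only for Artin-Schreier extensions; in general $z$ is not a $K$-translate of Pop's generator, and $K(z)|K$ need not be minimal purely wild. In sum: the inseparable case and the reduction stand, but the separable case is a program rather than a proof, with one broken normalization and with its essential estimate missing.
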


\begin{problem}
Is Proposition~\ref{addprepr} also true for polynomials in
several variables?
\end{problem}
\n
If this were not the case, then it would destroy our hope to capture the
complete theory of $\F_p((t))$ by adjoining axioms about extremality
with respect to additive polynomials to axiom system (\ref{AS1}). That
would mean that additive polynomials are important but do not tell us
all the missing information about $\F_p((t))$.

It should be mentioned that the case of several variables is very much
different from the case of one variable, and there is not much hope of
treating it by induction on the number $n$ of variables starting with
$n=1$. Indeed, if $(L|K,v)$ is an immediate extension generated by a
polynomial $f$, then a pseudo Cauchy sequence of algebraic type can be
constructed with respect to which the value of $f$ is not fixed (for
these notions, see [Ka1]). This has been done in [Er2] and in [Ku8]. A
similar procedure is {\it not} known for the case of several variables.

%
%
\subsection{Concluding remarks about valued $K[\varphi]$-modules}
Van den Dries' question can be reformulated as:
{\it Determine the model theory of valued $K[\varphi]$-modules.}
What do we mean by a ``valued module''? There are some notions of
``valued module'' in the literature, but as far as I know they do not
cover the case we are interested in. Basically, one could define a
``valued module'' to be a module which also has the structure of a
valued abelian group. But without any further assumptions on the
compatibility between module structure and valuation, this would not
lead us far. So we have to choose axioms for the compatibility that
cover the case we are interested in. I have done this in [Ku2], but
these axioms are not yet in a very satisfactory form. Although the
structure of $K[\varphi]$-modules can be nasty when $K$ is not perfect,
there is still the valuation on them and it appears that with an
appropriate choice of axioms one can tame these modules. Indeed, a first
answer to van den Dries' question was given by his student Thomas Rohwer
who proved in his thesis [Roh] the following results:

\begin{theorem}                             \label{rohw}
The elementary theory of $\F_p((t))$ as an $\F_p((t))[\varphi]$-module
with a predicate for $\F_p[[t]]$ is model complete.
The elementary theory of $\F_p((t))$ as an $\F_p(t)[\varphi]$-module
with a predicate for $\F_p[[t]]$ is decidable.
\end{theorem}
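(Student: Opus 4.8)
The plan is to treat the result as model theory of modules enriched by a valuation-type predicate: reduce an arbitrary formula, by a Baur--Monk-style elimination, to a Boolean combination of positive primitive (pp) conditions together with $P$-relativized pp-conditions, and then resolve the resulting finitely much data by valuation-theoretic input about $\F_p((t))$ --- in the end, the optimal approximation property of additive polynomials discussed in Section~\ref{sectepap}. First I would fix notation: let $R=\F_p((t))[\varphi]$ (resp.\ $R=\F_p(t)[\varphi]$), let $M=\F_p((t))$ be the left $R$-module in question, and let $P$ be the unary predicate naming $\F_p[[t]]$. In the first case $\varphi^{i}\cdot 1=1$, so by left division in $R$ (Theorem~\ref{Kphi}) one gets $\mathrm{Ann}_R(1)=R(\varphi-1)$ and hence $M\cong R/R(\varphi-1)$ is cyclic; in the second case $M$ is no longer cyclic over $R$, but the analysis below is module-theoretic regardless. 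The essential structural point is that $P=\F_p[[t]]$ is \emph{not} an $R$-submodule of $M$ (multiplication by $t^{-1}$ leaves it), so $P$ genuinely encodes the valuation $v_t$; moreover $M=\bigcup_{n}t^{-n}P$ and the sets $t^{k}P$ form a separating filtration.

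The elimination has two layers. For the pure module part one invokes the classical theorem (Garavaglia, Monk, Baur) that, modulo the theory of any module, every formula is a Boolean combination of pp-formulas. For the predicate one adds a filtration step: a quantifier ranging over $M$ is split into a part bounded in some $t^{-n}P$ and the divisibility data, and on each $t^{-n}P$ one works $t$-adically, using completeness of $\F_p[[t]]$. The target is that every formula becomes, modulo $\mathrm{Th}(M,P)$, a Boolean combination of conditions ``$\bar x$ lies in a fixed coset of a pp-definable subgroup'' and ``a fixed pp-linear form in $\bar x$ lies in $t^{k}P$''. One then has to read off the indices of the pp-definable subgroups (the Baur--Monk invariants) and which $t^{k}P$-cosets are nonempty. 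Here valuation theory enters: divisibility $D_{r}(x)\equiv\exists y\,(ry=x)$ reduces, by left division in $R$ and bijectivity of multiplication by nonzero scalars, to $\varphi$-divisibility and to solvability of $p$-polynomial equations (Artin--Schreier type and beyond), while annihilator conditions are treated dually via $\varphi c=c^{p}\varphi$. Now $(\F_p((t)),v_t)$ is a maximal --- in particular henselian and defectless --- valued field, so Hensel's Lemma together with the $p$-polynomial normal forms underlying Theorem~\ref{mpwe} decides exactly when such an equation has a solution of a prescribed value or inside a prescribed ball $t^{k}P$; and the image of every additive (hence every $p$-) polynomial over $\F_p((t))$ has the optimal approximation property (Section~\ref{sectepap}, from [Ku4]), i.e.\ the set $\{v f(\bar a):\bar a\in M^{n}\}$ attains its maximum. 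Consequently every invariant produced by the elimination equals $0$, $1$ or $\infty$, determined uniformly (and, when $R=\F_p(t)[\varphi]$, recursively) by the coefficients.

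Model completeness then follows formally: for an extension $(M_1,P_1)\subseteq(M_2,P_2)$ of models of the theory, the relative pp-elimination reduces elementarity of the embedding to preservation of pp-formulas and of the conditions ``$\in t^{k}P$'', and the previous paragraph showed that no ``intermediate'' index and no ambiguous ball can occur --- every such invariant is pinned down by the theory --- so every model is existentially closed in every extension, which is model completeness. For the $\F_p(t)[\varphi]$-module the same computation, made effective (the field $\F_p(t)$ is recursive, left division in $R$ is algorithmic, and by the previous paragraph the finitely many invariants attached to any formula are computable), produces a recursive axiomatization and hence decidability.

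The step I expect to be the main obstacle is the several-variable valuation-theoretic input above: controlling the solvability of systems $\sum_{i}c_i X_i^{p^{k}}\in t^{k}P$, that is, establishing (or invoking in the right uniform, and in the recursive case effective, form) the optimal approximation property for additive polynomials \emph{in several variables} over $\F_p((t))$, and verifying its compatibility with the $P$-relative elimination. As the remarks after Proposition~\ref{addprepr} indicate, passing from one variable to several is delicate and is not accessible by naive induction on the number of variables; the surrounding pure module theory is routine by comparison.
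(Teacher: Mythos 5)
The paper does not contain a proof of this theorem: it is stated and attributed to Rohwer's thesis [Roh], so there is no internal argument to compare yours against. Taking your proposal on its own terms, the framework --- relative elimination to pp-conditions and $P$-ball conditions, then valuation-theoretic computation of the resulting invariants --- is the right way to think about the result, but the proposal leaves the heart of the proof unproved and misplaces the difficulty you yourself flag.

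The Baur--Monk/Garavaglia theorem gives pp-elimination only in the pure module language $L_R$. Once the predicate $P$ (which, as you note, is not an $R$-submodule) is added, that theorem does not apply; the ``filtration step'' you gesture at --- splitting a quantifier over $M$ into a bounded part in some $t^{-n}P$ plus divisibility data --- is precisely the relative elimination lemma that has to be proved, and it constitutes the bulk of [Roh] rather than being ``routine by comparison.'' On the valuation-theoretic side you correctly identify the several-variable approximation property as the crucial external input, but the difficulty is not where you place it: over $\F_p((t))$ itself this is in fact available from [Dr--Ku] via local compactness, as Section~\ref{sectepap} records. What is genuinely open --- and what model completeness actually requires, since the back-and-forth runs over arbitrary pairs of models of the theory, not just over $\F_p((t))$ --- is a uniform version holding in every model, where local compactness fails and where, as the discussion around Proposition~\ref{addprepr} explains, there is no known reduction of several variables to one. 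Finally, ``every invariant equals $0$, $1$ or $\infty$'' is asserted rather than argued, and is false as literally stated: the torsion subgroup $M[\varphi-1]=\F_p$ is pp-definable and gives the invariant $[M[\varphi-1]:0]=p$. So the proposal has two genuine gaps: the relative pp-elimination in the language with $P$, and the determination --- for decidability, the effective determination --- of the resulting invariants.
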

\n
It should be noted that Pheidas and Zahidi [Ph--Za] prove analogous
results for $\F_p[t]$ as an $\F_p[t][\varphi]$-module.

\pars
Theorem~\ref{rohw} immediately leads to a number of questions:

\begin{problem}
What do Rohwer's results tell us about the model theory of the valued
field $\F_p((t))$?
\end{problem}

\begin{problem}
Does the elementary theory of $\F_p((t))$ as an $\F_p((t)) [\varphi]$-
or $\F_p(t)[\varphi]$-module admit quantifier elimination in some
natural language?
\end{problem}

Rohwer works with predicates $V_i$ that are interpreted by the sets of
all elements of value $\geq i$. This gives less information than a
binary predicate $P(x,y)$ interpreted by $vx\leq vy$ (``valuation
divisibility'').

\begin{problem}
What are the model theoretic properties of the elementary
theory of $\F_p((t))$ as a valued $\F_p((t))[\varphi]$- or
$\F_p(t)[\varphi]$-module in a language which includes a binary
predicate for valuation divisibility?
\end{problem}

\begin{problem}
What is the structure of extensions of valued $K[\varphi]$-modules? Can
one prove Ax--Kochen--Ershov principles for valued $K[\varphi]$-modules?
\end{problem}

An important tool in the model theory of valued fields is Kaplansky's
well known result that a valued field is maximal if and only if every
pseudo Cauchy sequence in this field has a limit (cf.\ [Ka1]). One can
ask the same question for other valued structures. In the case of valued
modules with value-preserving scalar multiplication, the corresponding
result is already in the literature. For the case of valued modules with
the above mentioned axioms that cover the case of the valued
$K[\varphi]$-module $\F_p((t))$, I proved the corresponding result in
[Ku2]. Together with Rohwer's work, this seems to be a good start
towards a comprehensive study of valued $K[\varphi]$-modules, including
a full answer to van den Dries' question, but quite a bit of work
remains to be done.

%
%
%
\section{Characterization of additive polynomials}  \label{sectchar}
In this section we give the basic characterizations of
additive polynomials and prove Theorem~\ref{thmaddpol}.

\begin{lemma}                               \label{lemgXY}
Take $f\in K[X]$ and consider the following polynomial in two variables:
\begin{equation}                            \label{gXY}
g(X,Y)\;:=\;f(X+Y)-f(X)-f(Y)\;.
\end{equation}
If there is a subset $A$ of cardinality at least $\deg f$ in some
extension field of $K$ such that $g$ vanishes on $A\times A$, then $f$
is additive and of the form (\ref{formaddpol}).
\end{lemma}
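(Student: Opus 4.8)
The plan is to argue by induction on $\deg f$, using the observation that $g(X,Y)$ has degree strictly less than $\deg f$ in $X$ (the top-degree term of $f(X+Y)$ cancels against $f(X)$), and likewise strictly less than $\deg f$ in $Y$. First I would dispose of the trivial cases: if $\deg f\le 0$ then additivity forces the constant term to vanish and there is nothing to prove, so assume $n:=\deg f\ge 1$. Write $f(X)=\sum_{j=0}^n a_j X^j$ with $a_n\ne 0$. The key device is to exploit the hypothesis that $g$ vanishes on $A\times A$ with $|A|\ge n$: for each fixed $\alpha\in A$, the one-variable polynomial $Y\mapsto g(\alpha,Y)$ has degree at most $n-1$ in $Y$ but vanishes at the $\ge n$ points of $A$, hence is identically zero. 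Since this holds for every $\alpha\in A$, the two-variable polynomial $g(X,Y)$ vanishes on the infinite set $A\times (\text{field})$ in the $Y$-direction, and then running the same argument in the $X$-variable (degree $\le n-1$ in $X$, vanishing at $\ge n$ points of $A$) shows $g\equiv 0$ as a polynomial. Thus $f(X+Y)=f(X)+f(Y)$ identically, i.e. $f$ is additive in the sense of the definition.

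Once $g\equiv 0$ identically, I would extract the shape (\ref{formaddpol}) by a standard coefficient computation: comparing the coefficient of $X^{j-k}Y^k$ in $f(X+Y)=f(X)+f(Y)$ gives $\binom{j}{k}a_j=0$ for all $j\ge 1$ and $1\le k\le j-1$, so $a_j=0$ whenever some $\binom{j}{k}$ with $0<k<j$ is nonzero in $K$. By Lucas' theorem (or Kummer's theorem on the $p$-adic valuation of binomial coefficients), $\binom{j}{k}\equiv 0$ for all $0<k<j$ exactly when $j$ is a power of the characteristic exponent $p$; in characteristic $0$ this forces $j=1$. Hence the only surviving monomials are $X^{p^i}$, giving $f(X)=\sum_i c_i X^{p^i}$ as claimed. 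This is essentially the content of Theorem \ref{thmaddpol}, which I may simply invoke: having shown $f$ additive, the form (\ref{formaddpol}) is immediate from that theorem.

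The only genuinely delicate point is making sure the degree bookkeeping is honest — specifically, that $\deg_Y g(\alpha,Y)\le n-1$ for \emph{every} specialization $\alpha$, including $\alpha$ in an extension field, and that a nonzero polynomial of degree $\le n-1$ over a (possibly finite) field cannot vanish at $n$ distinct points. The latter is the usual fact that a nonzero one-variable polynomial over a field has at most $\deg$ roots, valid over any field, so finiteness of $K$ causes no trouble once we have at least $\deg f$ points in $A$; this is exactly why the hypothesis asks for $|A|\ge \deg f$ rather than, say, $|A|\ge 2$. I expect the passage from ``$g$ vanishes on $A\times A$'' to ``$g$ vanishes identically'' to be where all the care is needed, and everything after that is the routine binomial-coefficient argument (or a direct appeal to Theorem \ref{thmaddpol}).
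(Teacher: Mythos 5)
Your argument is correct and runs along essentially the same lines as the paper's proof: bound $\deg_X g$ and $\deg_Y g$ strictly below $\deg f$, use root-counting (a nonzero univariate polynomial of degree $<|A|$ cannot vanish on all of $A$) to force $g\equiv 0$, and then read off the admissible exponents from the homogeneous components of $g$. The paper presents the root-counting step by contradiction (suppose $g(c,Y)\not\equiv 0$; pick $a\in A$ with $g(c,a)\ne 0$; then $g(X,a)$ is nonzero yet vanishes on all of $A$) rather than the direct two-pass version you give, but this is only a presentational difference. One caution, however: you must \emph{not} invoke Theorem~\ref{thmaddpol} to supply the conclusion that $f$ has the form (\ref{formaddpol}), because in this paper the ``additive $\Rightarrow$ (\ref{formaddpol})'' direction of that theorem is itself deduced from the present lemma (by taking $L$ infinite and $A=L$). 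Citing it here would be circular, so the explicit binomial-coefficient computation you supply is not an optional shortcut but the required argument; fortunately, it is correct. Finally, the ``induction on $\deg f$'' framing you announce at the outset is never actually used --- the proof is a single pass --- and can be dropped.
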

\begin{proof}
Assume that there is a subset $A$ of cardinality at least $\deg f$ in
some extension field of $K$ such that $g$ vanishes on $A\times A$. Take
$L$ to be any extension field of $K$. By field amalgamation, we may
assume that $A$ is contained in an extension field $L'$ of $L$. For all
$c\in L'$, the polynomials $g(c,Y)$ and $g(X,c)$ are of lower degree
than $f$. This follows from their Taylor expansion. Assume that there
exists $c\in L$ such that $g(c,Y)$ is not identically 0. Since $A$ has
more than $\deg g(c,Y)$ many elements, it follows that there must be
$a\in A$ such that $g(c,a)\not=0$. Consequently, $g(X,a)$ is not
identically 0. But since $A$ has more than $\deg g(X,a)$ many elements,
this contradicts the fact that $g(X,a)$ vanishes on $A$. This
contradiction shows that $g(c,Y)$ is identically 0 for all $c\in L$.
That is, $g$ vanishes on $L\times L$. Since this holds for all extension
fields $L$ of $K$, we have proved that $f$ is additive.

By what we have shown, $g(c,Y)$ vanishes identically for every $c$ in
any extension field of $K$. That means that the polynomial $g(X,Y)\in
K(Y)[X]$ has infinitely many zeros. Hence, it must be identically 0.
Write $f=d_nX^n+\ldots+d_0\,$. Then $g(X,Y)$ is the sum of the forms
$d_j(X+Y)^j-d_jX^j-d_jY^j$ of degree $j$, $1\leq j\leq \deg f$. Since
$g$ is identically 0, the same must be true for each of these forms and
thus for all $(X+Y)^j-X^j-Y^j$ for which $d_j\not=0$. But $(X+Y)^j-X^j
-Y^j\equiv 0$ can only hold if $j$ is a power of the characteristic
exponent of $K$. Hence, $d_j=0$ if $j$ is not a power of $p$. Setting
$c_i:=d_{p^i}$, we see that $f$ is of the form (\ref{formaddpol}).
\end{proof}

\n
{\bf Proof of Theorem~\ref{thmaddpol}:} \
Suppose that $f\in K[X]$ is additive. Then the polynomial $g$ defined in
(\ref{gXY}) vanishes on every extension field $L$ of $K$. Choosing $L$
to be infinite and taking $A=L$, we obtain from the foregoing
lemma that $f$ is of the form (\ref{formaddpol}).

Conversely, for every $i\in\N$, the mapping $x\mapsto x^{p^i}$ is a
homomorphism on every field of characteristic exponent $p$. Hence, every
polynomial $c_iX^{p^i}$ is additive, and so is the polynomial
$\sum_{i=0}^{m}c_i X^{p^i}$.                                   \QED

\begin{corollary}                               \label{charaddpol}
Take $f\in K[X]$.\n
a) \ If $f$ is additive, then the set of its roots in the algebraic
closure $\tilde{K}$ of $K$ is a subgroup of the additive group of
$\tilde{K}$. Conversely, if the latter holds and $f$ has no multiple
roots, then $f$ is additive.
\n
b) \ If $f$ satisfies condition~(\ref{fa+b}) on a field with at least
$\deg f$ many elements, then $f$ is additive.
\end{corollary}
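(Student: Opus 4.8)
The plan is to derive both parts from Lemma~\ref{lemgXY} and Theorem~\ref{thmaddpol}, using the polynomial $g(X,Y)=f(X+Y)-f(X)-f(Y)$ as the central tool. For part a), the forward direction is essentially immediate: if $f$ is additive then by Theorem~\ref{thmaddpol} it has the shape $\sum c_iX^{p^i}$, so for roots $\alpha,\beta\in\tilde K$ we get $f(\alpha-\beta)=f(\alpha)-f(\beta)=0$ (additivity gives $f(-\beta)=-f(\beta)$ since $f(0)=0$), hence the root set is closed under subtraction and contains $0$, so it is a subgroup of $(\tilde K,+)$. For the converse, suppose the root set $R\subseteq\tilde K$ is a subgroup and $f$ has no multiple roots; then $|R|=\deg f$ because $f$ splits into distinct linear factors over $\tilde K$ (here one should note $f$ must have nonzero constant term only if... actually $0\in R$ forces $f(0)=0$, so the constant term vanishes and $f=cX\prod_{0\neq\rho\in R}(X-\rho)$ up to the leading coefficient). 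I would then show $g$ vanishes on $R\times R$: for fixed $\alpha,\beta\in R$, since $R$ is a group $\alpha+\beta\in R$, so $f(\alpha+\beta)=0=f(\alpha)+f(\beta)$, giving $g(\alpha,\beta)=0$. Thus $g$ vanishes on a set $A=R$ of cardinality $\deg f$ in the extension field $\tilde K$ of $K$, and Lemma~\ref{lemgXY} yields that $f$ is additive.

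For part b), suppose $f$ satisfies (\ref{fa+b}) for all $a,b$ in a field $E\supseteq K$ (or $E=K$ itself) with $|E|\geq\deg f$. This says precisely that $g$ vanishes on $E\times E$, in particular on $A\times A$ for $A=E$, a subset of cardinality at least $\deg f$ in the extension field $E$ of $K$. Lemma~\ref{lemgXY} applies directly and gives that $f$ is additive (and of the form (\ref{formaddpol})). Note this also covers the claim made earlier in the text that over an infinite field the pointwise additivity condition on $K$ already forces genuine additivity.

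The main obstacle — really the only subtle point — is the counting argument in the converse of part a): one must be careful that ``$f$ has no multiple roots'' together with ``$R$ is a subgroup'' forces $\deg f=|R|$, which requires observing that $f$ splits completely over $\tilde K$ with all roots in $R$ (the leading coefficient contributes no extra roots), and in particular that $f$ has no nonzero constant term, which follows from $0\in R$. Once $|A|=|R|=\deg f$ is secured, Lemma~\ref{lemgXY} does all the work, and the form (\ref{formaddpol}) comes for free. Everything else is a routine unwinding of definitions.
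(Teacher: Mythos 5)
Your proposal is correct and follows essentially the same route as the paper: both directions of part a) and all of part b) reduce to Lemma~\ref{lemgXY} via the auxiliary polynomial $g(X,Y)=f(X+Y)-f(X)-f(Y)$, with the counting step $|R|=\deg f$ supplied by the no-multiple-roots hypothesis. The only superfluous bits are invoking Theorem~\ref{thmaddpol} for the forward direction of a) (plain additivity suffices, as the paper does it) and the digression about the constant term, which is not actually needed since distinctness of roots over the algebraically closed $\tilde K$ already gives $|R|=\deg f$.
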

\begin{proof}
a): \ If $f$ is additive and $a,b$ are roots of $f$, then $f(a+b)=f(a)
+f(b)=0$; hence $a+b$ is also a root. Further, $f(0)=f(0+0)=f(0)+f(0)$
shows that $0=f(0)=f(a-a)=f(a)+f(-a)=f(-a)$, so $0$ and $-a$ are also
roots. This shows that the set of roots of $f$ form a subgroup of
$(\tilde{K},+)$.

Now assume that the set $A$ of roots of $f$ forms a subgroup of
$(\tilde{K},+)$, and that $f$ has no multiple roots. The latter implies
that $A$ has exactly $\deg f$ many elements. Since $A+A=A$, the
polynomial $g(X,Y)=f(X+Y)-f(X)-f(Y)$ vanishes on $A\times A$. Hence by
Lemma~\ref{lemgXY}, $f$ is additive.
\mn
b): \ This is an immediate application of Lemma~\ref{lemgXY}.
\end{proof}

\pars
\begin{exercise}\n
a)\ \ Let $K$ be any finite field. Give an example of a polynomial
$f\in K[X]$ which is not additive but induces an additive mapping on
$(K,+)$.\n
b)\ \ Show that the second assertion in part a) of
Corollary~\ref{charaddpol} fails if we drop the condition that $f$ has
no multiple roots. Replace this condition by a suitable condition on the
multiplicity of the roots.\n
c)\ \ Deduce Corollary~\ref{charaddpol} from the theorem of \ind{Artin}
as cited in \fvklit{L}, VIII, \S11, Theorem~18.
\end{exercise}

%
%
%
\section{Rings of additive polynomials}              \label{sectring}
This section is devoted to the structure of rings of additive
polynomials. Euclidean division is discussed in the following

{\bf Proof of Theorem~\ref{Kphi}:} \
Take $s=\sum_{i=0}^{m}c_i \varphi^i$ and $s'=\sum_{i=0}^{n}d_i
\varphi^i$. If $\deg s'<\deg s$, then we set $q=0$ and $r=s'$. Now
assume that $\deg s'=n\geq m=\deg s$. Then
\[\deg(s'- d_n c_m^{-p^{n-m}}\varphi^{n-m}\, s)\leq n-1<\deg s'\;.\]
Now take $q\in K[\varphi]$ such that $\deg (s'-qs)$ is minimal. Then
$\deg (s'-qs)<\deg s$. Otherwise, we could apply the above to $s'-qs$ in
the place of $s'$, finding some $q'\in R$ such that $\deg (s'- (q+q')s)
= \deg (s'-qs-q's)<\deg (s'-qs)$ contradicting the minimality of $q$.
Setting $r=s'-qs$, we obtain $s'=qs+r$ with $\deg r <\deg s$. We have
proved that $K[\varphi]$ is left euclidean. If $K$ is perfect, hence
$K=K^{p^m}$, then we also have
\[\deg(s'- s\, (c_m^{-1}d_n)^{1/p^m} \varphi^{n-m})\leq n-1<\deg s'\;,\]
and in the same way as above one deduces that $K[\varphi]$ is right
euclidean.

Now assume that $K$ is not perfect and choose some element $c\in K$
not admitting a $p$-th root in $K$. Then $K^p\cap cK^p=\{0\}$ and
\[\varphi K[\varphi]\,\cap\, c\varphi K[\varphi] = \{0\}\]
since every nonzero additive polynomial in the set $\varphi K[\varphi]$
has coefficients in $K^p$ whereas every nonzero additive polynomial in
$c\varphi K[\varphi]$ has coefficients in $cK^p$.
                                                          \QED

\pars
\begin{remark}                              \label{remCohn}
Let us state some further properties of the ring $K[\varphi]$ which
follow from Theorem~\ref{Kphi}. More generally, let $R$ be any left
principal ideal domain. Then $R$ is a left free ideal ring (fir), and it
is thus a semifir, i.e., every finitely generated left or right ideal is
free of unique rank (note that this property is left-right symmetrical,
cf.~\fvklit{C2}, Chapter 1, Theorem~1.1). Consequently, every finitely
generated submodule of a (left or right) free $R$-module is again free,
cf.~\fvklit{C2}, Chapter 1, Theorem~1.1$\,$. On the other hand, every
finitely generated torsion free (left or right) $R$-module is embeddable
in a (finitely generated) free $R$-module if and only if $R$ is right
Ore, cf.~\fvklit{C2}, Chapter 0, Corollary~9.5 and \fvklit{Ge},
Proposition 4.1$\,$. Being a semifir, $R$ is right Ore if and only if it
is a right Bezout ring. But if $R$ is not right Ore, then it contains
free right ideals of arbitrary finite or countable rank, and $R$ is thus
not right noetherian, cf.~\fvklit{C2}, Chapter 0, Proposition~8.9 and
Corollary~8.10$\,$. Every projective (left or right) $R$-module is free,
cf.~\fvklit{C2}, Chapter 1, Theorem~4.1$\,$. A right $R$-module is flat
if and only if it is torsion free, and a left $R$-module $M$ is flat if
and only if every finitely generated submodule of $M$ is free,
cf.~\fvklit{C2}, Chapter 1, Corollary~4.7 and Proposition~4.5$\,$. In
view of the above, the latter is the case if and only if every finitely
generated submodule of $M$ is embeddable in a free $R$-module. Further,
a left $R$-module $M$ is flat if and only if for every $n\in \N$ and all
right linearly independent elements $r_1,\ldots,r_n\in R$,
\[\forall x_1,\ldots,x_n\in M:\;\; \sum r_i x_i = 0 \,\Rightarrow\,
\forall i:\> x_i = 0\;,\]
cf.~\fvklit{C1}, Chapter 1, Lemma~4.3$\,$. As a semifir, $R$ is a
coherent ring. Finally, since $R$ is left Ore, it can be embedded in a
skew field of left fractions, cf.~\fvklit{C2}, Chapter 0,
Corollary~8.7$\,$.
\end{remark}

Note that in particular, the above shows that all finitely generated
torsion free (left or right) $K[\varphi]$-modules are free if and only
if $K$ is perfect, that is, $K[\varphi]$ is euclidean on both sides.

\pars
\begin{exercise}
Describe the relation of the degree functions on $K[X]$ and $K[\varphi]$
via the correspondence (\ref{corraddpol}), giving thereby a proof of
$\deg rs = \deg r +\deg s$. Show that it also satisfies $\deg r+s \leq
\max\{\deg r,\deg s\}$ with equality holding if $\deg r\not=\deg s$.
Can it be transformed into a valuation?
\end{exercise}

%
%
\section{Frobenius-closed bases of function fields}  \label{sectFrcb}
In this section, we prove the existence of Frobenius-closed bases of
algebraic function fields $F|K$ of transcendence degree 1, and
exhibit the connection between their existence and the structure of $F$
as a $K[\varphi]$-module (for arbitrary transcendence degree).

Take an arbitrary extension $F|K$ of fields of characteristic $p > 0$.
Recall that a $K$-basis $B$ of $F$ is called \bfind{Frobenius-closed} if
$B^p \subset B$, where $B^p = \{b^p\mid b\in B\}=\varphi B$. In [Ku9] we
need Frobenius-closed bases because they have the following property:
\begin{lemma}                  \label{Frpp}
Take a Frobenius-closed basis $z_j$, $j\in J$, of $F|K$. If the sum
\[s = \sum_{i\in I} c_i z_i\,,\;\; c_i\in K\,,
\;I\subset J\mbox{\ finite}\]
is a $p$-th power, then for every $i\in I$ with $c_i\not= 0$, the basis
element $z_i$ is a $p$-th power of a basis element.
\end{lemma}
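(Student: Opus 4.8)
The plan is to exploit two facts: first, that in characteristic $p$ the Frobenius $x\mapsto x^p$ is a ring endomorphism of $F$, so it is additive; and second, that coordinates with respect to a basis are unique. First I would write $s=t^p$ for some $t\in F$ and expand $t$ in the given basis, say $t=\sum_{l\in L}d_l z_l$ with $L\subset J$ finite and $d_l\in K$. Applying the Frobenius gives
\[
s\;=\;t^p\;=\;\sum_{l\in L}d_l^{\,p}\,z_l^{\,p}\;,
\]
and since $B=\{z_j\mid j\in J\}$ is Frobenius-closed, each $z_l^{\,p}$ is again a basis element. So the right-hand side is a representation of $s$ as a finite $K$-linear combination of basis elements, and the whole argument comes down to recognizing it as \emph{the} representation.

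The one step that needs a little care is the distinctness of the basis elements $z_l^{\,p}$, $l\in L$. If $z_l^{\,p}=z_{l'}^{\,p}$, then $(z_l-z_{l'})^p=z_l^{\,p}-z_{l'}^{\,p}=0$, so $z_l=z_{l'}$ and hence $l=l'$; equivalently, $p$-th roots are unique in a field of characteristic $p$, so the Frobenius is injective on $B$. Therefore the $z_l^{\,p}$ are pairwise distinct, and $\sum_{l\in L}d_l^{\,p}z_l^{\,p}$ is the unique expansion of $s$ in the basis $B$.

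Finally I would compare this with the hypothesized expansion $s=\sum_{i\in I}c_i z_i$. By uniqueness of coordinates, every $i\in I$ with $c_i\ne 0$ must occur among the indices of the $z_l^{\,p}$, i.e.\ $z_i=z_l^{\,p}$ for some $l\in L$; thus $z_i$ is the $p$-th power of the basis element $z_l$, which is exactly the claim. I do not expect a genuine obstacle here — it is essentially bookkeeping — the only thing one must not overlook is the injectivity of the Frobenius on $B$, which is what guarantees that raising $t$ to the $p$-th power does not identify two distinct basis elements and so does not spoil the uniqueness comparison.
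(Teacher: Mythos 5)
Your argument is correct and follows the same route as the paper: write $s=t^p$, expand $t$ in the basis, apply the Frobenius so that $s$ becomes a $K$-linear combination of the basis elements $z_l^p$, and then compare coordinates with the given expansion $s=\sum_{i\in I}c_iz_i$. The only difference is that you make explicit the injectivity of the Frobenius on the basis, a small point the paper leaves tacit.
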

\begin{proof}
Assume that
\[s = \left(\sum_{j\in J_0} {c_j}' z_j\right)^p ,\;\; {c_j}'\in K\]
where $J_0\subset J$ is a finite index set. Then
\[\sum_{i\in I} c_i z_i = s = \sum_{j\in J_0} ({c_j}')^p z_j^p\]
where the elements $z_j^p$ are also basis elements by hypothesis,
which shows that every $z_i$ which appears on the left hand side
(i.e., $c_i \not= 0$) equals a $p$-th power\ $z_j^p$ appearing on the
right hand side.
\end{proof}

\pars
We will show the existence of Frobenius-closed bases for algebraic
function fields of transcendence degree $1$ over a perfect field of
characteristic $p>0$, provided that $K$ is relatively algebraically
closed in $F$. We first prove the following:
\begin{lemma}
If $F$ is an algebraic function field of transcendence degree $1$ over
an algebraically closed field $K$ of arbitrary characteristic and $q$ is
an arbitrary natural number $>1$, then there exists a basis of $F|K$
which is closed under $q$-th powers.
\end{lemma}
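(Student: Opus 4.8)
The plan is to build the required basis ``by pole orders'' at every place of $F|K$, imitating the case $F=K(x)$, where $\{x^n : n\ge 0\}\cup\bigcup_{a\in K}\{(x-a)^{-m} : m\ge 1\}$ is a basis that is already closed under $q$-th powers. Throughout I work with the places of $F|K$ (equivalently, with the smooth projective curve having $F$ as function field) and with Riemann--Roch spaces $L(D)$; since $K$ is algebraically closed, every place has residue field $K$, and this is the only place where that hypothesis will be used.

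First I fix one place $\infty$ of $F|K$ and set $A:=\bigcup_{N\ge 0}L(N\infty)$, the ring of functions regular away from $\infty$. Its set of non-gaps $H:=\{n\ge 0 : \dim_K L(n\infty)>\dim_K L((n-1)\infty)\}$ is a numerical semigroup, hence closed under multiplication by every positive integer (because $qn=n+\dots+n$); moreover for each $n\in H$ there is a function of pole order exactly $n$ at $\infty$, and choosing one such function for each $n\in H$ gives a $K$-basis of $A$ indexed by $H$. To obtain a $q$-closed basis of $A$ I make such a choice $u_{n_0}$ only for the ``$q$-primitive'' indices (those $n_0\in H$, $n_0\ge1$, with $n_0/q\notin H$), put $u_0:=1$, and define $u_{q^jn_0}:=u_{n_0}^{\,q^j}$. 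Every $n\in H$ is uniquely of the form $q^jn_0$ with $n_0$ $q$-primitive, so this is well defined; each $u_n$ still has pole order exactly $n$ at $\infty$, so $\{u_n : n\in H\}$ is still a basis of $A$; and $u_n^q=u_{qn}$ by construction.

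For the remaining places I argue similarly. For a place $P\ne\infty$ and an integer $m\ge1$, Riemann--Roch (allowing an auxiliary pole at $\infty$ of large order) provides a function $g_{P,m}$ with a pole of order exactly $m$ at $P$ and regular at every place $\ne P,\infty$. Again I only choose $g_{P,m_0}$ for $q\nmid m_0$ and set $g_{P,q^jm_0}:=g_{P,m_0}^{\,q^j}$, so that for every $P\ne\infty$ and $m\ge1$ the element $g_{P,m}$ has pole order exactly $m$ at $P$, is regular at every other place $\ne\infty$, and satisfies $g_{P,m}^q=g_{P,qm}$. I then check that $B:=\{u_n : n\in H\}\cup\{g_{P,m} : P\ne\infty,\ m\ge1\}$ is a $K$-basis of $F$: given $f\in F$, eliminating the polar part of $f$ at each place $\ne\infty$, in decreasing order of pole order, by subtracting suitable $K$-multiples of the $g_{P,m}$ (here residue fields $=K$ is used) reduces $f$ to an element of $A$, which gives spanning; for linear independence one notes that at each place $P\ne\infty$ the only members of $B$ with a pole are the $g_{P,m}$, which have pairwise distinct pole orders, so a vanishing finite combination must have all $g_{P,m}$-coefficients zero, after which independence of the $u_n$ in $A$ finishes the job. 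By construction $B^q\subseteq B$, as required.

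The routine ingredients --- Riemann--Roch, the semigroup structure of non-gaps, partial fractions over the Dedekind ring $A$ --- I would merely cite. The step that deserves genuine care, and is really the point, is the $q$-closure device: one replaces the arbitrary basis vector at an index by the appropriate $q$-th power of the vector at the underlying $q$-primitive index, and one must verify that this replacement does not spoil the basis property. It does not, precisely because the verifications of spanning and of independence use only that each $u_n$ (resp.\ $g_{P,m}$) has pole order exactly $n$ at $\infty$ (resp.\ exactly $m$ at $P$) and is otherwise pole-free away from $\infty$, and these features are exactly what raising to the $q$-th power preserves.
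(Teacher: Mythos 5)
Your proof is correct and follows essentially the same route as the paper: fix a place $\infty$, use Riemann--Roch to split $F$ into $R^\infty=\bigcup_N L(N\infty)$ plus a complement indexed by pole orders at the other places, and use the numerical‐semigroup ``$q$-primitive'' decomposition $n=q^\nu m$ to make the choices $q$-closed. The one cosmetic difference is that for the complement the paper produces a single $t_P$ with a simple pole at each $P\ne\infty$ and uses the partial-fraction expansion in powers of $t_P$ (which is automatically closed under all powers), whereas you choose a fresh $g_{P,m_0}$ for each $q$-primitive $m_0$ and close it up by $q$-th powers, replacing the $P$-adic expansion with an iterative subtraction of polar parts; both work.
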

\n
If $F= K(x)$ is a rational function field, then our lemma follows from
the \bfind{Partial Fraction Decomposition}: Every element $f\in
F$ has a unique representation
\[f= c + \sum_{n>0}c_nx^n + \sum_{a\in K}\sum_{n>0}
c_{a,n}\frac{1}{(x-a)^n}\]
where only finitely many of the coefficients $c,c_n,c_{a,n}\in K$
are nonzero. If we put
\[t_a = \frac{1}{x-a}\;,\;\; t_{\infty} = x\]
then it follows that the elements
\[1,\;t_a^n \mbox{\ \ with\ \ } a\in K\cup\{\infty\},\; n\in \N\]
form a $K$-basis of $F$; this basis has the property that {\bf every}
power of a basis element is again a basis element.

For general function fields the Partial Fraction Decomposition remains
true in a modified form (according to Helmut Hasse) that we shall
describe now. At this point, we need the Riemann-Roch Theorem. In order
to apply it, we have to introduce some notation. In what follows, we
always assume that $K$ is relatively algebraically closed in $F$. A
\bfind{divisor} of $F|K$ is an element of the (multiplicatively written)
free abelian group generated by all places of $F|K$. (By a place of
$F|K$ we mean a place of $F$ which is trivial on $K$, i.e., $P|_K=
\mbox{id}$. We identify equivalent places.) The places themselves are
called \bfind{prime divisors}. A divisor may thus be written in the form
\[
A=\prod_{P} P^{\,v_P A}
\]
where the product is taken over all places of $F|K$ and the $v_P A$ are
integers, only finitely many of them nonzero. The {\bf degree of a
non-trivial place}\index{degree of a place} $P$ of $F|K$, denoted by
\gloss{$\deg P$}, is defined to be the degree $[FP:K]\,$ (which is
finite since $F|K$ is an algebraic function field in one variable).
Accordingly, the \bfind{degree of a divisor} $A$, denoted by
\gloss{$\deg A$}, is defined to be the integer $\sum_{P} v_P A \cdot
\deg P\,$. By the symbol ``$v_P$'' we will also denote the valuation
on $F$ which is associated with the place $P$. Following the notation of
\fvklit{F--Jr}, we set
\[{\cal L}(A)\>:=\>\{f\in F\mid v_P f\geq -v_P A \mbox{ for all places
$P$ of } F|K\}\]
is a $K$-vector space. Indeed, $0\in {\cal L}(A)$ since $v_P 0=\infty>
-v_P A$ for all places $P$ of $F|K$. Further, $v_P (K^\times) =\{0\}$,
hence $\forall c\in K^\times:\>v_P (cf)=v_P f$ for all $P$, so $f\in
{\cal L} (A)$ implies $cf\in {\cal L}(A)$. Finally, if $f,g\in {\cal L}
(A)$, then $v_P(f-g)\geq \min\{v_Pf, v_Pg\}\geq -v_P A$ for all $P$,
hence $f-g\in {\cal L}(A)$. We write
\[\dim A\>:=\>\dim_K {\cal L}(A)\;.\]
The divisor $A$ determines bounds for the zero and pole orders of the
algebraic functions in ${\cal L}(A)$. For example, if $A=P^n$ with $n$ a
natural number, then $f\in {\cal L}(A)$ if and only if $f$ has no pole
at all (in which case it is a constant function) or has a pole at $P$ of
pole order at most $n= v_P A$.

\begin{theorem} {\bf (Riemann-Roch)}\n
Let $F|K$ be an algebraic function field in one variable with $K$
relatively algebraically closed in $F$. There exists a smallest
non-negative integer $g$, called the genus of $F|K$, such that
\[\dim A \geq \deg A -g +1\]
for all divisors $A$ of $F|K$. Furthermore,
\[\dim A = \deg A -g +1\]
whenever $\deg A >2g-2$.
\end{theorem}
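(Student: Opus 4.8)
\medskip\noindent
{\bf A proof proposal.}\ \ This is a classical theorem; I sketch how I would prove it. The plan is the usual two-step argument: first establish Riemann's inequality $\dim A\geq\deg A-g+1$ together with the existence of the genus $g$, and then upgrade it to an equality for divisors of large degree by means of (Weil) differentials. Throughout one uses the elementary bookkeeping for ${\cal L}(A)$: writing $A\preceq B$ for $v_PA\leq v_PB$ at every place $P$, one has ${\cal L}(A)\subseteq{\cal L}(B)$ and
\[
0\;\leq\;\dim B-\dim A\;\leq\;\deg B-\deg A\;,
\]
the upper bound because passing from $B'$ to $B'P$ enlarges ${\cal L}$ inside a quotient that embeds into the $\deg P$-dimensional space $FP$; in particular $\deg A-\dim A$ is nondecreasing in $A$. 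Moreover ${\cal L}(A(f))=f^{-1}{\cal L}(A)$ for $f\in F^\times$ and $\deg(f)=0$, so that $\dim A$ and $\deg A$ depend only on the linear equivalence class of $A$. Finally, since $K$ is relatively algebraically closed in $F$ we have ${\cal L}(1)=K$ (with $1$ the trivial divisor), i.e.\ $\dim 1=1$, and $\dim A=0$ when $\deg A<0$, as a nonzero section would give an effective divisor of negative degree.

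For Riemann's inequality, fix a transcendental $x\in F$, let $B$ be its pole divisor, so $n:=\deg B=[F:K(x)]$, and choose a $K(x)$-basis $u_1,\ldots,u_n$ of $F$ together with an effective divisor $C$ such that $v_Pu_i\geq-v_PC$ for all $i$ and all $P$. For every $m\geq 0$ the $n(m+1)$ elements $x^ju_i$ with $0\leq j\leq m$ lie in ${\cal L}(B^mC)$ and are $K$-linearly independent, so $\dim(B^m)\geq\dim(B^mC)-\deg C\geq n(m+1)-\deg C$; since $\deg(B^m)=mn$, the numbers $\deg(B^m)-\dim(B^m)$ are bounded above. Given an arbitrary divisor $A$, write $A=A_1A_2^{-1}$ with $A_1,A_2$ effective; then ${\cal L}(B^mA^{-1})\supseteq{\cal L}(B^mA_1^{-1})\ne\{0\}$ once $m$ is large, and for any $0\ne h\in{\cal L}(B^mA^{-1})$ the element $f=h^{-1}$ satisfies $A(f)\preceq B^m$, so by monotonicity and linear-equivalence invariance $\deg A-\dim A=\deg(A(f))-\dim(A(f))\leq\deg(B^m)-\dim(B^m)$. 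Hence $\deg A-\dim A$ is bounded above over all $A$; set $g:=1+\max_A(\deg A-\dim A)$. Then $g\geq 0$ (take $A=1$), and $g$ is the least integer with $\dim A\geq\deg A-g+1$ for every divisor $A$.

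For the equality, introduce the adele (repartition) ring $\bA_F$ of $F|K$ and the $K$-subspaces $\bA_F(A)=\{(\alpha_P)_P\mid v_P\alpha_P\geq-v_PA\ \forall P\}$, and set $i(A):=\dim A-\deg A+g-1$, which is $\geq 0$ by Riemann's inequality. Comparing the filtration of $\bA_F$ by the subspaces $\bA_F(A)$ with the filtration of $F$ by the ${\cal L}(A)$ one shows that $i(A)=\dim_K\bA_F/(F+\bA_F(A))$. One then proves that the space $\Omega$ of $K$-linear functionals on $\bA_F$ that vanish on $F+\bA_F(A)$ for some $A$ is a one-dimensional $F$-vector space. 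Fixing $0\ne\omega\in\Omega$ and letting $W=(\omega)$ be the divisor of $\omega$, i.e.\ the largest divisor $A$ with $\omega$ vanishing on $\bA_F(A)$, the residue pairing identifies $\bA_F/(F+\bA_F(A))$ with the $K$-dual of ${\cal L}(WA^{-1})$, whence $i(A)=\dim(WA^{-1})$. Combined with the definition of $i(A)$ this is the Riemann-Roch formula $\dim A=\deg A-g+1+\dim(WA^{-1})$; putting $A=1$ yields $\dim W=g$, and $A=W$ yields $\deg W=2g-2$. Therefore, as soon as $\deg A>2g-2$, we have $\deg(WA^{-1})<0$, so $\dim(WA^{-1})=0$, and the formula reduces to $\dim A=\deg A-g+1$, as required.

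The step I expect to be the genuine obstacle is the one-dimensionality of $\Omega$ over $F$ --- a form of Serre duality, and not a formality. That $\dim_F\Omega\leq 1$ comes from a dimension count playing $i(A)$ against $i(WA^{-1})$ for a hypothetical $F$-linearly independent second differential and deriving a contradiction for $\deg A$ large; that $\Omega\ne 0$ is obtained by exhibiting one differential explicitly, for instance by transporting the obvious Weil differential of the rational function field $K(x)$ --- where, as discussed above, partial fractions make Riemann-Roch transparent --- to $F$ via the trace of $F|K(x)$, which in turn requires controlling the different of $F|K(x)$. Everything else is linear algebra with the two filtrations of $\bA_F$, together with the degree/dimension bookkeeping of the first paragraph.
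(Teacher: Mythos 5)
The paper does not prove this theorem; it only cites Deuring [Deu], and your sketch is the standard algebraic proof via repartitions (adeles) and Weil differentials that the cited reference and its successors use. Your outline is correct — the degree/dimension bookkeeping, the Riemann bound via the pole divisor of a transcendental $x$ and a bounding divisor $C$ for a $K(x)$-basis, the identification $i(A)=\dim_K\bA_F/(F+\bA_F(A))$, and the reduction to $\dim_F\Omega=1$ are all the right steps, and you correctly flag the one-dimensionality of the space of Weil differentials as the genuinely nontrivial point.
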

\n
For a proof, see \fvklit{Deu}.
\pars
Let $P_{\infty}$ be a fixed place of $F|K$ and $R^{\infty}$ the ring of
all $f\in F$ which satisfy $v_P f\geq 0$ for every $P\not= P_{\infty}$.
The following is an application of the Riemann-Roch Theorem:
\begin{corollary}
For every $P\not=P_{\infty}$ there exists an element $t_P\in F$ such that
\begin{eqnarray*}
v_P t_P & = & -1\\
v_Q t_P & \geq & 0\;\;\mbox{\ \ for\ \ } Q\not= P,P_{\infty}\;.
\end{eqnarray*}
\end{corollary}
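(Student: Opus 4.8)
The plan is to apply the Riemann--Roch Theorem to two nested divisors differing only by the factor $P$ and read off the element $t_P$ from the resulting jump in dimension. Fix the place $P\ne P_\infty$ and choose a natural number $m$ large enough that $\deg(P_\infty^m)=m\deg P_\infty>2g-2$; since $\deg P_\infty\geq 1$ this simply means $m$ sufficiently large, and then also $\deg(P\cdot P_\infty^m)=\deg P+m\deg P_\infty>2g-2$. Put $A:=P\cdot P_\infty^m$ and $A':=P_\infty^m$.

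Because $\deg A>2g-2$ and $\deg A'>2g-2$, the equality part of Riemann--Roch applies to both divisors, so $\dim A=\deg A-g+1$ and $\dim A'=\deg A'-g+1$, whence $\dim A-\dim A'=\deg A-\deg A'=\deg P\geq 1$. On the other hand, ${\cal L}(A')\subseteq{\cal L}(A)$: if $v_Q f\geq -v_Q A'$ for all $Q$, then for $Q\ne P$ nothing changes since $v_QA=v_QA'$, while $v_P f\geq -v_PA'=-v_PA+1>-v_PA$. Since the inclusion of these $K$-vector spaces is strict, I can pick some $f\in{\cal L}(A)\setminus{\cal L}(A')$.

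It remains to verify that $t_P:=f$ has the asserted properties. From $f\in{\cal L}(A)$ and the defining inequalities $v_Q f\geq -v_Q A$ we get $v_Q f\geq 0$ for every $Q\ne P,P_\infty$ (where $v_QA=0$) and $v_P f\geq -1$. If in addition $v_P f\geq 0$ held, then $v_Q f\geq -v_Q A'$ would hold for every $Q$, i.e.\ $f\in{\cal L}(A')$, contradicting the choice of $f$; hence $v_P f=-1$, exactly as required.

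There is essentially no serious obstacle here; the only point demanding a little care is the choice of $m$, which must be large enough that the \emph{exact} Riemann--Roch formula is available for $A$ \emph{and} $A'$ at once, so that the dimension count is clean, together with the trivial but essential observation that $\deg P=[FP:K]\geq 1$ forces the dimension genuinely to jump. One could instead argue with the inequality form $\dim A\geq\deg A-g+1$ combined with $\dim A'=\deg A'-g+1$ for $\deg A'>2g-2$, but the two-divisor comparison above is the cleanest route and is the one I would write up.
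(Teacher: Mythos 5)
Your proof is correct and is essentially the same as the paper's: the paper also takes $n$ with $n\deg P_\infty > 2g-2$, applies the equality case of Riemann--Roch to both $P_\infty^n$ and $PP_\infty^n$ to get a dimension jump of $\deg P \geq 1$, and picks $t_P \in {\cal L}(PP_\infty^n)\setminus{\cal L}(P_\infty^n)$. The only difference is that you spell out the final verification that $v_P t_P = -1$, which the paper leaves implicit.
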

\begin{proof}
If we choose $n\in \N$ as large as to satisfy $n\deg P_{\infty}>2g-2$,
then by the Riemann-Roch Theorem,
\[\dim (PP_{\infty}^n) = \deg P + n\deg P_{\infty} - g + 1
>n\deg P_{\infty} - g + 1 = \dim P_{\infty}^n\;.\]
Hence there is an element $t_P\in {\cal L}(PP_{\infty}^n)\setminus
{\cal L}(P_{\infty}^n)$. This element has the required properties.
\end{proof}

We return to the proof of our lemma, assuming that $K$ is algebraically
closed. Hence, $K$ is the residue field of every place $P$ of $F|K$
(that is, $\deg P=1$). Every $t_P$ of the foregoing corollary is the
inverse of a uniformizing parameter for $P$. Every $f\in F$ can be
expanded $P$-adically with respect to such a uniformizing parameter, and
the principal part appearing in this expansion has the form
\[h_P(f) = \sum_{n>0} c_{P,n} t_P^n\;,\]
where only finitely many of the coefficients $c_{P,n}\in K$ are nonzero,
namely $n\leq - v_P f$.\n
By construction, $t_P$ has only a single pole $\not= P_{\infty}$ and this
pole is $P\,$; the same holds for $h_P(f)$ (if $h_P(f)\not= 0$).
Consequently,
\[h = f - \sum_{P\not= P_{\infty}} h_P(f)\]
has no pole other than $P_{\infty}$ and is thus an element of $R^{\infty}$.
We have shown that $f$ has a unique representation
\[f= h+ \sum_{P\not=P_{\infty}} \sum_{n>0} c_{P,n}^{ } t_P^n\]
with coefficients $c_{P,n}\in K$ and an element $h\in R^{\infty}$. This
shows that the elements
\[t^n_P \mbox{\ \ with\ \ } P\not= P_{\infty},\; n\in \N\]
form a $K$-basis of $F$ modulo $R^{\infty}$ which has the property that
every power of a basis element is again a basis element.
\par\medskip
Now it remains to show that $R^{\infty}$ admits a basis which is closed
under $q$-th powers.
An integer $n\in\N$ is called \bfind{pole number}\ of $P_{\infty}$ if
there exists $t_n\in R^{\infty}$ such that $v_{P_{\infty}} t_n = -n$.
Let $H_{\infty} \subseteq \N$ be the set of all pole numbers. Fixing a
$t_n$ for every $n\in H_{\infty}\,$, we get a $K$-basis
\[1,\; t_n \mbox{\ \ with\ \ } n\in H_{\infty}\]
of $R^{\infty}$. To get a basis which is closed under $q$-th powers, we
have to carry out our choice as follows:

Observe that $H_{\infty}$ is closed under addition; in particular
\[qH_{\infty} \subset H_{\infty}\;.\]
For every $m\in H_{\infty}\setminus qH_{\infty}$ we choose an arbitrary
element $t_m\in R^{\infty}$ with $v_{P_{\infty}} t_m = -m$. Every
$n \in H_{\infty}$ can uniquely be written as
\[n = q^{\nu} m \mbox{\ \ where\ \ } \nu\geq 0\mbox{\ \ and\ \ } m \in
H_{\infty}\setminus qH_{\infty}\;.\]
Accordingly we put
\[t_n = t_m^{\,q^{\nu}}\]
which implies
\[v_{P_{\infty}} t_n = q^{\nu}\cdot v_{P_{\infty}} t_m = -q^{\nu} m
= -n\;.\]
This construction produces a $K$-basis
\[1,\; t_m^{\,q^{\nu}} \mbox{\ \ with\ \ } m\in H_{\infty}\setminus
qH_{\infty}, \; \nu\geq 0\]
of $R^{\infty}$, which is closed under $q$-th powers. This concludes the
proof of our lemma.

\parm
For the generalization of this lemma to perfect ground fields of
characteristic $p>0$ we have to choose $q = p$.
\sn
{\bf Proof of Theorem~\ref{Fc}:} \ We have to prove:
\sn
{\it Let $F$ be an algebraic function field of transcendence degree $1$
over a perfect field $K$ of characteristic $p>0$. If $K$ is relatively
algebraically closed in $F$, then there exists a Frobenius-closed basis
for $F|K$.}
\sn
If $K$ is not algebraically closed, we have to modify the proof of the
previous lemma since not every place $P$ of $K$ has degree 1. (Such a
modification is also necessary for the Partial Fraction Decomposition in
$K(x)$ if $K$ is not algebraically closed.) The modification reads as
follows:

For every place $P$ of $F|K$, let
\[d_P = \deg P = [FP:K]\]
be the degree of $P$. For every $P\not= P_{\infty}$ we choose
elements $u_{P,i}\in R^{\infty}$, $1\leq i\leq d_P$, such that their
residues $u_{P,1}P,...,u_{P,d_P}P$ form a $K$-basis of $FP$. We note
that for every $\nu\geq 0$, the $p^{\nu}$-th
powers $u_{P,i}^{p^{\nu}}$ of these elements have the same
property: their $P$-residues also form a $K$-basis of $FP$ since $K$
is perfect.\n
We write every $n\in \N$ in the form
\[n = p^{\nu} m \mbox{\ \ with\ \ } m\in \N ,\; (p,m) = 1,\;\nu\geq 0\]
and observe that the elements
\[u_{P,i}^{p^{\nu}}\,t_P^n \mbox{\ \ with\ \ } P\not= P_{\infty},\;
n\in \N,\; 1\leq i\leq d_P\]
form a Frobenius-closed $K$-basis of $F$ modulo $R^{\infty}$.
\par\medskip
It remains to construct a Frobenius-closed $K$-basis of $R^{\infty}$.
This is done as follows: We consider the $K$-vector spaces
\[{\cal L}_n = {\cal L}(P_{\infty}^n)=\{x\in F\mid v_{P_{\infty}} x\geq
-n\mbox{ and } v_P(x)\geq 0 \mbox{ for } P\not= P_{\infty}\}\;.\]
By our assumption that $K$ is relatively algebraically closed in $F$, we
have ${\cal L}_0 = K$. Further,
\[R^{\infty}\>=\>\bigcup_{n\in\N} {\cal L}_n\;.\] We set
\[d_{\infty,n} \>:=\> \dim {\cal L}_n/{\cal L}_{n-1}\> \geq \> 0\;.\]
(Note that by the Riemann-Roch Theorem, $d_{\infty,n}=[FP_{\infty}:K]$
for large enough $n$; cf.~the proof of the above corollary.) Now for $n
= 1,2,...$ we shall choose successively basis elements $t_{n,i} \in
{\cal L}_n$ modulo ${\cal L}_{n-1}\,$. Then the elements
\[1,\; t_{n,i} \mbox{\ \ with\ \ } n\in\N,\; 1\leq i\leq d_{\infty,n}\]
form a $K$-basis of $R^{\infty}$. To obtain that this basis is
Frobenius-closed, we organize our choice as follows:

If $n = pm$, the $p$-th powers $t^p_{m,i}\in {\cal L}_n$ are linearly
independent modulo ${\cal L}_{p(m-1)}$ and even modulo ${\cal L}_{pm-1}
= {\cal L}_{n-1}$. This fact follows from our hypothesis that $K$ is
perfect: the existence of nonzero elements $c_i\in K$ with $\sum c_i
t^p_{m,i} \in {\cal L}_{pm-1}$, i.e., $v_{P_{\infty}}\sum c_i t^p_{m,i}
>-pm$, would yield $v_{P_{\infty}} \sum c_i^{1/p} t_{m,i}>-m$, hence
$\geq -m+1$, showing that $\sum c_i^{1/p} t_{m,i} \in {\cal L}_{m-1}$,
which is a contradiction. In our choice of the elements $t_{n,i}$ we are
thus free to take all the elements $t_{m,i}^p$ and to extend this set to
a basis of ${\cal L}_n$ modulo ${\cal L}_{n-1}$ by arbitrary further
elements, if necessary (for $n$ large enough, the elements $t_{m,i}^p$
will already form such a basis). This procedure guarantees that the
$p$-th power of every basis element $t_{m,i}$ is again a basis element,
namely equal to $t_{pm,j}$ for suitable $j$. Hence a basis constructed
in this way will be Frobenius-closed.                         \QED

\parm
Let $F|K$ be an arbitrary extension of fields of characteristic $p>0$.
Both $F$ and $K$ are $K[\varphi]$-modules, and so is the quotient module
$F/K\,$. Suppose that $F/K$ is a free $K[\varphi]$-module. Then it
admits a $K[\varphi]$-basis. Let $B_0\subset F$ be a set of
representatives for such a $K[\varphi]$-basis of $F/K$. It follows that
\[B \,=\, \bigcup_{n=0}^{\infty} B_0^{p^n}\,\cup\,\{1\} \,=\,
\bigcup_{n=0}^{\infty} \varphi^n B_0 \,\cup\,\{1\}\]
is a set of generators of the $K$-vector space $F$. By our construction
of $B$, every $K$-linear combination of elements of $B\setminus \{1\}$
may be viewed as a $K[\varphi]$-linear combination of elements of
$B_0\,$. This shows that the elements of $B$ are $K$-linearly
independent, and $B$ is thus a Frobenius-closed basis of $F|K$. Note
that $B_0$ is the basis of a free $K[\varphi]$-submodule $M$ of $F$
which satisfies $F=M\oplus K\,$.

The converse to this procedure would mean to extract a
$K[\varphi]$-basis $B_0$ from a Frobenius-closed $K$-basis $B$. But
$B_0$ can only be found if for every element $b\in B\setminus \{1\}$
there is some element $b_0$ which is not a $p$-th power in $F$ and such
that $b=b_0 ^{p^n}$ for some $n\in\N\cup\{0\}\,$. This will hold if no
element of $F\setminus K$ has a $p^n$-th root for every $n\in\N$.
\begin{lemma}
If $F|K$ is an algebraic function field (of arbitrary transcendence
degree), and if $K$ is relatively algebraically closed in $F$, then no
element of $F\setminus K$ has a $p^n$-th root for every $n\in\N$.
\end{lemma}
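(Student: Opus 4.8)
The plan is to argue by contradiction: suppose $a\in F\setminus K$ has a $p^n$-th root $a_n\in F$ for every $n\in\N$. Since $K$ is relatively algebraically closed in $F$ and $a\notin K$, the element $a$ is transcendental over $K$; in particular $F\ne K$, so $d:=\trdeg F|K\geq 1$. The strategy is to put $a$ itself into a transcendence basis, which simultaneously makes the situation finite-dimensional and keeps $a$ a non-$p$-th power, and then to read off a contradiction from the degree.

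First I would extend $\{a\}$ to a transcendence basis $a=t_1,t_2,\ldots,t_d$ of $F|K$ and put $F_0:=K(t_1,\ldots,t_d)$. Since $F|K$ is an algebraic function field, hence finitely generated, and $F_0$ contains a transcendence basis, the extension $F|F_0$ is finite; set $N:=[F:F_0]<\infty$.

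Next comes the key point: for every $n$ the polynomial $X^{p^n}-a$ is irreducible over $F_0$, and hence $[F_0(a_n):F_0]=p^n$. By the standard irreducibility criterion for $X^{p^n}-c$ over a field of characteristic $p$ (see \fvklit{L}; it follows by induction on $n$ from the irreducibility of $X^p-c$ for $c$ not a $p$-th power), this reduces to checking $a\notin F_0^p$. Writing $F_0=E(t_1)$ as a rational function field in $t_1=a$ over $E:=K(t_2,\ldots,t_d)$, one has $F_0^p=E^p(t_1^p)$, and an identity $t_1 h(t_1^p)=g(t_1^p)$ with $g,h\in E^p[X]$, $h\ne 0$, is impossible by comparing the exponents of $t_1$ modulo $p$ on the two sides. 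So $a\notin F_0^p$, as needed. (Alternatively one may extend the $t_1$-adic valuation of $F_0$ to $F$ and observe that $v a_n=\frac{1}{p^n}\,v a$ forces an infinite ramification index over $F_0$, contradicting the fundamental inequality (\ref{fundineq}).)

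Finally, since $a_n\in F$ we have $F_0(a_n)\subseteq F$, so $p^n=[F_0(a_n):F_0]\leq[F:F_0]=N$ for all $n\in\N$, which is absurd. I expect the only genuine obstacle to be this first reduction step: it is crucial to include $a$ among the transcendence basis elements, for then $F|F_0$ is finite while $a$ is still not a $p$-th power in $F_0$; once this is set up, the remaining steps are routine.
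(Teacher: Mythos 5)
Your argument is correct and takes a genuinely different route from the paper's. Both proofs start the same way: the element $a$ is transcendental over $K$ (because $K$ is relatively algebraically closed in $F$), so it can be placed in a transcendence basis; but from there you diverge. You then work purely field-theoretically: writing $F_0=K(t_1,\ldots,t_d)=E(t_1)$ with $t_1=a$, you observe that $t_1\notin F_0^p=E^p(t_1^p)$ by comparing exponents modulo $p$, so by the classical irreducibility criterion $X^{p^n}-a$ is irreducible over $F_0$; consequently $[F_0(a_n):F_0]=p^n$ would have to be bounded by the fixed finite degree $[F:F_0]$ for every $n$, which is impossible. The paper instead equips $K({\cal T})$ with a valuation $v$ for which the values of the transcendence basis elements are rationally independent (so that $vK({\cal T})=\bigoplus_{t\in{\cal T}}\Z\, vt$); then $va$ is not divisible by $p$ in $vK({\cal T})$, and since $(vF:vK({\cal T}))$ is finite by the fundamental inequality, $va$ fails to be divisible by $p^n$ in $vF$ for $n$ large, so $a$ has no $p^n$-th root in $F$. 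Your main proof buys elementarity: it needs only the irreducibility of $X^{p^n}-c$ for $c\notin k^p$ and the finiteness of $[F:F_0]$, with no valuation theory at all. The paper's proof, on the other hand, fits the valuation-theoretic framework of the whole article and reuses machinery (constructing valuations with prescribed rational independence, then invoking the fundamental inequality) that serves the author elsewhere. Your parenthetical alternative --- extending the $t_1$-adic valuation of $F_0$ to $F$ and bounding the ramification index via the fundamental inequality --- is essentially a streamlined version of the paper's own argument, using a simpler valuation on $F_0$ that already suffices for this lemma.
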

\begin{proof}
Let $f\in F\setminus K$. Since $K$ is relatively algebraically closed in
$F$, we know that $f$ is transcendental over $K$. So we may choose a
transcendence basis ${\cal T}$ of $F|K$ containing $f$. We may choose a
$K$-rational valuation $v$ on the rational function field $K({\cal T})$
such that the values of all elements in ${\cal T}$ are rationally
independent (cf., e.g., [Ku7]). This yields that
$vK({\cal T})= \bigoplus_{t\in {\cal T}}\Z vt$. In particular, $vf$ is
not divisible by $p$ in $vK({\cal T})$. Since $F|K({\cal T})$ is finite,
the same is true for $(vF:vK({\cal T}))$ by the fundamental inequality
(\ref{fundineq}). This yields that there is some $n\in\N$ such that $vf$
is not divisible by $p^n$ in $vF$. Hence, $f$ does not admit a $p^n$-th
root in $F$.
\end{proof}

This lemma shows that if $F|K$ is an algebraic function field with $K$
relatively algebraically closed in $F$, admitting a Frobenius-closed
basis $B$ and if we let $B_0$ be the set of all elements in $B$ which do
not admit a $p$-th root in $F$, then we obtain $B=\bigcup_{n=0}^{\infty}
B_0^{p^n}\,\cup\,\{1\}$. Since the elements of $B$ are $K$-linearly
independent, the elements of $B_0$ are $K[\varphi]$-linearly
independent over $K$. Moreover, $B_0$ is a set of generators of the
$K[\varphi]$-module $F$ over $K$. Hence, the set $B_0/K$ is a
$K[\varphi]$-basis of $F/K\,$. We have thus proved:
\begin{proposition}                               \label{F/Kfree}
Let $F|K$ be an algebraic function field (of arbitrary transcendence
degree), and $K$ relatively algebraically closed in $F$. Then $F$ admits
a Frobenius-closed $K$-basis if and only if $F/K$ is a free
$K[\varphi]$-module.
\end{proposition}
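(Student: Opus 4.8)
The statement is an equivalence, so the plan is to prove the two implications separately; the forward direction (freeness $\Rightarrow$ existence of a Frobenius-closed basis) is the soft one and uses nothing about function fields, while the converse is where the Lemma just proved enters. For the forward direction I would choose a $K[\varphi]$-basis of $F/K$, lift it to a set $B_0\subseteq F$ of representatives, and put $B=\{1\}\cup\bigcup_{n\geq 0}\varphi^n B_0$. Since $\varphi 1=1$ and $\varphi(\varphi^n b_0)=\varphi^{n+1}b_0$, this $B$ satisfies $\varphi B\subseteq B$, i.e.\ $B^p\subseteq B$. That $B$ is a $K$-basis of $F$ is then pure bookkeeping: a finite $K$-linear combination $c\cdot 1+\sum c_{n,\alpha}\varphi^n b_\alpha$ regroups as $c+\sum_\alpha P_\alpha b_\alpha$ with $P_\alpha=\sum_n c_{n,\alpha}\varphi^n\in K[\varphi]$, so $K$-spanning of $F$ by $B$ is exactly $K[\varphi]$-spanning of $F/K$ by $B_0$ (lift), and $K$-linear independence of $B$ is exactly $K[\varphi]$-linear independence of the chosen basis (project to $F/K$). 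In particular the $\varphi^n b_0$ are pairwise distinct and distinct from $1$.

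For the converse, suppose $F$ carries a Frobenius-closed $K$-basis $B$; rescaling, I may assume $1\in B$, and then $B\cap K=\{1\}$ since distinct basis elements are $K$-linearly independent. Let $B_0$ be the set of $b\in B$ having no $p$-th root in $F$ (so $1\notin B_0$). The heart of the argument is to establish
\[B\setminus\{1\}\;=\;\bigcup_{b_0\in B_0}\{\,\varphi^n b_0\mid n\geq 0\,\}\]
as a disjoint union. First I would show: \emph{if $b\in B\setminus\{1\}$ has a $p$-th root $c$ in $F$, then already $c\in B$ and $b=\varphi c$.} Expanding $c=\sum_j c_j z_j$ in the basis $B$ and raising to the $p$-th power gives $b=\sum_j c_j^p z_j^p$, a $K$-linear combination of the distinct basis elements $z_j^p$ (which lie in $B$ by Frobenius-closedness); as $b$ is itself a basis element, uniqueness of the basis representation forces a single summand with coefficient $1$, whence $c=z_j\in B$. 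Iterating, every $p^n$-th root of an element of $B\setminus\{1\}$ that happens to lie in $F$ in fact lies in $B$.

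Now the Lemma proved just above enters: if some $b_0\in B\setminus\{1\}$ had a $p^n$-th root in $F$ for every $n$, those roots would form an infinite subset of $B\subseteq F$, whereas $b_0\in B\setminus\{1\}\subseteq F\setminus K$ contradicts the Lemma. Hence each $b\in B\setminus\{1\}$ has a largest $\nu\geq 0$ admitting a $p^\nu$-th root $\beta\in F$; by maximality $\beta$ has no $p$-th root in $F$, it is unique since Frobenius is injective, and it lies in $B$ by the previous step, so $\beta\in B_0$ and $b=\varphi^\nu\beta$; maximality of $\nu$ and uniqueness of $p$-th roots give the disjointness. Finally, since $\varphi(K)=K^p\subseteq K$, the Frobenius descends to $F/K$, and reducing the displayed decomposition modulo $K$ shows $F/K$ is $K$-spanned by the $\varphi^n(b_0+K)$, hence $K[\varphi]$-generated by $B_0/K:=\{b+K\mid b\in B_0\}$. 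For $K[\varphi]$-independence, a relation $\sum_i P_i(b_i+K)=0$ with $P_i=\sum_n c_{n,i}\varphi^n$ unwinds to $\sum_{n,i}c_{n,i}\varphi^n b_i\in K=K\cdot 1$, a $K$-linear combination of distinct elements of $B\setminus\{1\}$ lying in the span of the single basis element $1$; uniqueness of the basis representation forces every $c_{n,i}=0$. So $B_0/K$ is a $K[\varphi]$-basis of $F/K$, which is therefore free.

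The main obstacle I anticipate is the step showing that a $p$-th root in $F$ of a basis element is forced back into the Frobenius-closed basis, together with ruling out an infinite descent through iterated $p$-th roots; it is precisely here that the hypotheses ``$F|K$ an algebraic function field with $K$ relatively algebraically closed in $F$'' are used, namely through the Lemma (no element of $F\setminus K$ has a $p^n$-th root for all $n$). Everything else is a translation back and forth between $F$ as a $K$-vector space and $F/K$ as a $K[\varphi]$-module.
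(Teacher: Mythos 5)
Your proof is correct and follows the same route as the paper's: the forward direction lifts a $K[\varphi]$-basis of $F/K$ to $B_0\subset F$ and takes $B=\{1\}\cup\bigcup_{n\geq 0}\varphi^n B_0$, while the converse defines $B_0$ as the basis elements without a $p$-th root in $F$ and uses the preceding Lemma to obtain $B\setminus\{1\}=\bigcup_{n\geq 0}\varphi^n B_0$. You do make explicit a point the paper passes over silently, namely that a $p$-th root in $F$ of an element of $B\setminus\{1\}$ is forced to lie in $B$ itself (by expanding the root in the Frobenius-closed basis and invoking uniqueness of coordinates), which is a welcome tightening rather than a different approach.
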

\n
This lemma shows that Theorem~\ref{Fc} implies Theorem~\ref{freeKfmod}.

\pars
\begin{problem}
Do Theorems~\ref{Fc} and~\ref{freeKfmod} also hold for transcendence
degree $>1$?
\end{problem}
\begin{problem}
Do Theorems~\ref{Fc} and~\ref{freeKfmod} also hold if the assumption
that $K$ be perfect is replaced by the assumption that $F|K$ be
separable? Note that if $K$ is not perfect, then there exist places $P$
of $F|K$ such that $FP|K$ is not separable, even if $F|K$ is separable.
In this case, the construction of the proof of Theorem~\ref{Fc} breaks
down and it cannot be expected that there is a Frobenius-closed
$K$-basis of $F$ which is as ``natural'' as the ones produced by that
construction.
\end{problem}

\pars
\begin{exercise}
Show that $F/K$ cannot be a free $K[\varphi]$-module if $K$ is not
relatively algebraically closed in $F$. Does there exist an algebraic
field extension which admits a Frobenius-closed basis? Prove a suitable
version of Proposition~\ref{F/Kfree} which does not use the assumption
that $K$ be relatively algebraically closed in $F$.
\end{exercise}

%
%
\section{$G$-modules and group complements}           \label{sectgmgc}
In this section, we introduce some notions that we will need in the next
section. Take any group $G$. For $\sigma\in G$, \bfind{conjugation} {\bf
by $\sigma$} means the automorphism\glossary{$\tau^\sigma$}
\[G\ni \tau\;\mapsto\; \tau^\sigma:= \sigma^{-1} \tau\sigma\;.\]
Note that
\begin{equation}                            \label{G-mod}
\tau^{\sigma\rho}= \rho^{-1}\sigma^{-1}\tau\,\sigma\,\rho =
\rho^{-1}(\tau^\sigma)\rho = (\tau^\sigma)^\rho
\;\;\;\mbox{ for all } \tau,\sigma,\rho\in G\;.
\end{equation}
Further, we set $\tau^{-\sigma}:=(\tau^{-1})^\sigma$ (which indeed is
the inverse of $\tau^\sigma$). As usual, we set $M^\sigma=\{m^\sigma\mid
m\in M\}$ for every subset $M\subset G$. A subgroup $N$ is normal in $G$
if and only if $N^\sigma=N$ for all $\sigma\in G$. We always have
$G^\sigma=G$. Hence, if $H$ is a \bfind{group complement} of the normal
subgroup $N$ in $G$, that is,
\begin{equation}                            \label{HN}
HN=G\;\mbox{\ \ and\ \ }\ H\cap N=\{1\}\;,
\end{equation}
then so is every conjugate $H^\sigma$ for $\sigma\in G$. Uniqueness up to
conjugation would mean that these are the only group complements of $N$
in $G$.

\pars
We shall now introduce two notions that will play an important role in
Section~\ref{sectagfe}. A \bfind{right $G$-module} is an arbitrary group
$N$ together with a mapping $\mu$ from $G$ into the group of automorphisms
of $N$ such that $\mu(\sigma\rho)=\mu(\rho) \circ\mu(\sigma)$. For
example, to every $\sigma\in G$ we may associate the conjugation by
$\sigma$; in view of (\ref{G-mod}), this turns $G$ into a right
$G$-module. In this setting, a subgroup $N$ of $G$ is normal if and only
if it is a $G$-submodule of $G$. A mapping $\phi$ from $G$ into a $G$-module
$N$ is called a \bfind{twisted homomorphism} (or \bfind{crossed
homomorphism}) if it satisfies
\begin{equation}
\phi (\sigma \rho) = \phi (\sigma)^\rho \phi (\rho)\;\;\;\mbox{ for
all }\sigma,\rho\in G\;.
\end{equation}
As for a usual homomorphism, also the kernel of a twisted homomorphism
is a subgroup of $G$, but it may not be normal in $G$.

\pars
Let us assume that $H$ is a group complement of the normal subgroup
$N$ in $G$. It follows from (\ref{HN}) that every element $\sigma\in G$
admits a unique representation
\begin{equation}                            \label{gHgN}\label{sigNsigH}
\sigma=\sigma_H^{ } \sigma_N^{ }\;\;\;\mbox{ with } \; \sigma_H^{ }\in H
\,,\, \sigma_N^{ }\in N
\end{equation}
Note that $H$ is a system of representatives for the left cosets of $G$
modulo $N$. Since $N\lhd G$, we have $HN=NH$, and $H$ is also a system
of representatives for the right cosets of $G$.

Now assume in addition that $N$ is abelian. Then the scalar
multiplication of the $G$-module $N$ given by conjugation reads as
\begin{equation}                            \label{g^h}
\sigma^\rho= \rho_N^{-1}(\rho_H^{-1} \sigma \rho_H^{ }) \rho_N^{ } =
\rho_H^{-1} \sigma \rho_H^{ }=\sigma^{\rho_H^{ }} \;\;\;\mbox{ for all }
\sigma\in N\,,\,\rho\in G
\end{equation}
since $\rho_N^{ }$ and $\rho_H^{-1} \sigma \rho_H^{ }$ are elements
of $N$. According to (\ref{gHgN}) and (\ref{g^h}) we write
\[\sigma \rho=\sigma_H^{ } \sigma_N^{ } \rho_H^{ } \rho_N^{ }=
\sigma_H^{ } \rho_H^{ } \rho_H^{-1}\sigma_N^{ } \rho_H^{ } \rho_N^{
}=\sigma_H^{ } \rho_H^{ } \sigma_N^\rho \rho_N^{ }\;.\]
Hence, the projection $\sigma\mapsto\sigma_H^{ }$ onto the first factor in
(\ref{gHgN}) is the canonical epimorphism from $G$ onto $H$ with
kernel $N$. The other projection $\sigma\mapsto\sigma_N^{ }$ is a twisted
homomorphism from $G$ onto $N$, satisfying
\begin{equation}                            \label{twhom}
(\sigma \rho)_N^{ } = \sigma_N^\rho \rho_N^{ } \;\;\;\mbox{ for all }
\sigma,\rho\in G\;;
\end{equation}
it induces the identity on $N$, and its kernel is $H$.

%
%
\section{Field extensions generated by $p$-polynomials} \label{sectagfe}
{\bf In this section, let $K$ be a field of characteristic $p>0$.}
By a \bfind{Galois extension} we mean a normal and separable, but not
necessarily finite algebraic extension. A field extension $L|K$ is
called \bfind{$p$-elementary extension} if it is a finite Galois
extension and its Galois group is an elementary-abelian $p$-group, that
is, an abelian $p$-group in which every nonzero element has order $p$.
In particular, $[L:K]$ is a power of $p$.

In this section, we will consider the following larger class of all
extensions $L|K$ which satisfy the following condition:
\begin{equation}                            \label{semi-p-elcond}
\left.\begin{array}{l}
\mbox{there exists a Galois extension $K'|K$ which is linearly
disjoint from $L|K$,}\\
\mbox{such that $L.K'|K'$ is a $p$-elementary extension}\\
\mbox{and also $L.K'|K$ is a Galois extension.}
\end{array}\right\}
\end{equation}
From the linear disjointness it follows that $\Gal L.K'|L \isom \Gal
K'|K$ and that $[L:K]=[L.K':K']$ which yields that $[L:K]=p^n$ for some
natural number $n$. For a further investigation of this situation, we
will use the following notation. We set
\[L'\>:=\>L.K'\]
and define
\sn
$\bullet\quad G:=\Gal L'|K\,$,\n
$\bullet\quad N:=\Gal L'|K'\lhd G\,$,\n
$\bullet\quad H:=\Gal L'|L \isom \Gal K'|K \isom G/N\,$.
\sn
The group $N$ is abelian of order $p^n$. Since $K'|K$ is assumed to be a
Galois extension, $N$ is a normal subgroup of $G$. That is, $N$ is a
\ind{right $G$-module} with scalar multiplication given by conjugation:
\[(\sigma,\tau) \;\mapsto\; \sigma^\tau = \tau^{-1}\sigma\tau
\;\;\;\mbox{ for all } \sigma\in N\,,\,\tau\in G\;.\]
Since $L.K'=L'$ and $L\cap K'=K$, we have that $H\cap N=1$ and $G=HN$,
that is, $H$ is a group complement for $N$ in $G$. As we have seen in
the last section, every element $\sigma\in G$ admits a unique
representation (\ref{gHgN}). Since $N$ is abelian, the scalar
multiplication of the $G$-module $N$ is given by (\ref{g^h}). The
projection $\sigma\mapsto\sigma_N^{ }$ is a twisted homomorphism from
$G$ onto $N$, satisfying (\ref{twhom}); it induces the identity on $N$,
and its kernel is $H$.

\begin{lemma}
If $L|K$ satisfies condition (\ref{semi-p-elcond}) then w.l.o.g., the
extension $K'|K$ may assumed to be finite (which yields that also $L'|K$
is finite).
\end{lemma}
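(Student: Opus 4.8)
The plan is to take the Galois extension $K'|K$ guaranteed by condition~(\ref{semi-p-elcond}) and replace it by a suitable finite subextension without destroying any of the three required properties (linear disjointness from $L|K$, the $p$-elementary property of $L.K'|K'$, and the condition that $L.K'|K$ be Galois). First I would observe that since $[L:K]=p^n<\infty$, the extension $L'|K$ is generated over $K$ by $L$ together with finitely many elements of $K'$: indeed $L'=L.K'$, and writing $L=K(\alpha_1,\ldots,\alpha_r)$ we have $L'=K'(\alpha_1,\ldots,\alpha_r)$, but more to the point $[L':K']=[L:K]=p^n$ means $L'=K'(\beta)$ for a single primitive element $\beta$, whose minimal polynomial over $K'$ has only finitely many coefficients, each lying in some finite subextension of $K'|K$. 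Let $K_0|K$ be a finite subextension of $K'|K$ containing all those coefficients and large enough that $K_0|K$ is itself Galois (possible since $K'|K$ is Galois and separable, so we may enlarge $K_0$ to the splitting field over $K$ of the minimal polynomials of a finite generating set — this is still a finite extension contained in $K'$).

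Next I would verify the three properties for $K_0$ in place of $K'$. Linear disjointness of $L|K$ from $K_0|K$ is immediate: $K_0\subseteq K'$ and $L|K$ is linearly disjoint from $K'|K$, hence a fortiori from the subextension $K_0|K$ (if $L\cap K_0\supsetneq K$ then already $L\cap K'\supsetneq K$, contradiction; and linear disjointness of an extension from a Galois extension is equivalent to trivial intersection). For the $p$-elementary property: $L.K_0|K_0$ is Galois with $\Gal(L.K_0|K_0)\isom\Gal(L|K)$ by linear disjointness, and this group is isomorphic to $\Gal(L.K'|K')$, which is elementary-abelian of order $p^n$ by hypothesis; so $L.K_0|K_0$ is again $p$-elementary. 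Finally, $L.K_0|K$ is Galois because it is generated over $K$ by the Galois extension $L.K'|K$... no — rather, because by our choice $L.K_0$ already contains the primitive element $\beta$ and its full set of conjugates (we arranged $K_0$ to contain the coefficients of $\operatorname{Irr}(\beta,K')$, hence $L.K_0\supseteq K_0(\beta)$ has degree $p^n$ over $K_0$ and, being normal and separable over $K_0$ which is normal and separable over $K$, one checks $L.K_0|K$ is normal and separable, i.e.\ Galois). It then follows from $[L.K_0:K]=[L:K]\cdot[K_0:K]<\infty$ that $L'$ (in the new notation) is finite over $K$.

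The main obstacle I anticipate is the bookkeeping required to ensure that the finite subfield $K_0$ we extract is simultaneously \emph{Galois over $K$} and large enough that $L.K_0|K$ is Galois; naively taking the subfield generated by the coefficients of a primitive polynomial gives neither property automatically. The fix is the standard one — pass to a splitting field — but one must check it stays inside $K'$, which holds precisely because $K'|K$ is normal. Once $K_0$ is in hand, re-christening $K'$ as $K_0$ and $L'$ as $L.K_0$ leaves the entire setup of the section (the groups $G,N,H$, the complement structure, the twisted homomorphism) formally unchanged, now with all fields finite over $K$ — which is the content of the lemma. I would phrase the write-up so that this "w.l.o.g." substitution is explicit, noting that $L'|K$ finite is an immediate consequence.
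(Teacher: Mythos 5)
This is a genuinely different route from the paper's, and unfortunately it breaks down at the decisive point. The paper chooses $K_0$ to be the fixed field in $L'$ of the centralizer $N_0$ of $N$ in $G=\Gal L'|K$: finiteness of $K_0|K$ then comes from the embedding of $G/N_0$ into the finite automorphism group of $N$, and a separate group-theoretic argument (using that elements of $N_0$ commute with $N$) shows $H\cap N_0\lhd G$, which is exactly the statement that $L.K_0|K$ is Galois. Your choice of $K_0$ --- the Galois hull in $K'$ of the subfield generated by the coefficients of the minimal polynomial of $\beta$ over $K'$ --- captures nothing: if $\beta$ is chosen in $L$ (which the primitive element theorem permits, and which your inclusion $L.K_0\supseteq K_0(\beta)$ requires), then linear disjointness of $L|K$ and $K'|K$ forces the minimal polynomial of $\beta$ over $K'$ to coincide with the minimal polynomial of $\beta$ over $K$, since both have degree $p^n$ and the former divides the latter. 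So its coefficients already lie in $K$, your recipe returns $K_0=K$, and the claim collapses to ``$L|K$ is Galois'' --- which condition~(\ref{semi-p-elcond}) does not give: it makes $L.K'|K$ Galois, not $L|K$.

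Independently of that, the two verifications you offer for the Galois requirements are each flawed. The claim ``$\Gal(L.K_0|K_0)\isom\Gal(L|K)$ by linear disjointness'' tacitly assumes $L|K$ is Galois, which you are never given. And ``$K_0(\beta)$ is normal and separable over $K_0$, which is normal and separable over $K$, hence $L.K_0|K$ is normal'' invokes the false principle that a tower of Galois extensions is Galois; besides which the normality of $K_0(\beta)|K_0$ was itself never established. Arranging that $L.K_0|K$ remains Galois after shrinking $K'$ to a finite subextension is the whole content of the lemma, and it is precisely where the paper's centralizer construction does its work --- merely capturing coefficients of a minimal polynomial cannot replace it.
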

\begin{proof}
Suppose that $K'|K$ is an arbitrary algebraic extension such that
(\ref{semi-p-elcond}) holds. Let $N_0:=\{\tau\in G\mid \forall\sigma_N^{ }
\in N:\;\tau^{-1} \sigma_N^{ }\tau= \sigma_N^{ }\}$ be the subgroup of all
automorphisms in $G$ whose action on $N$ is trivial (i.e., $N_0$ is the
centralizer of $N$ in $G$). Since $N$ is abelian, it is contained in
$N_0\,$. Consequently, the fixed field $K_0$ of $N_0$ in $L'$ is
contained in $K'$ (which is the fixed field of $N$ in $L'$ by definition
of $N$). Since $N$ is a normal subgroup of $G$, also its centralizer
$N_0$ is a normal subgroup of $G$, showing that $K_0|K$ is a Galois
extension. We set $H_0:= G/N_0$. By our choice of $N_0\,$, the action of
$G$ on $N$ induces an action of $H_0$ on $N$ which is given by
$\rho^{-1} \sigma_N^{ } \rho = \tau^{-1}\sigma_N^{ }\tau$ for $\rho=\tau
N_0\in H_0\,$. Consequently, $H_0$ must be finite, being a group of
automorphisms of the finite group $N$. This proves that $K_0|K$ is a
finite Galois extension with Galois group $H_0\,$. Recall that it
follows from (\ref{semi-p-elcond}) that also $L|K$ is finite.

We claim that $H\cap N_0$ is a normal subgroup of $G$. Let $\tau\in H
\cap N_0$ and $\sigma\in G$; we want to show that $\tau^\sigma\in H\cap
N_0$. Write $\sigma=\sigma_H^{ }\sigma_N^{ }$ according to
(\ref{sigNsigH}). Then $\tau^\sigma=\sigma_N^{-1} (\sigma_H^{-1}
\tau\sigma_H^{ })\sigma_N^{ }\,$; since $\sigma_H^{ }\in H$ and $N_0\lhd
G$, we find that $\tau':= \sigma_H^{-1}\tau\sigma_H^{ }\in H\cap N_0$.
In particular, $\tau'$ lies in the centralizer of $N$. In view of
$\sigma_N^{ }\in N$ we obtain $\tau^\sigma = \sigma_N^{-1}\tau'
\sigma_N^{ } = \sigma_N^{-1}\sigma_N^{ }\tau' =\tau'\in H\cap N_0$. We
have proved that $H\cap N_0$ is a normal
subgroup of $G$. With $L_0$ the fixed field of $H\cap N_0$ in $L'$, we
hence obtain a Galois extension $L_0|K$. Since $L_0=L.K_0$, the
extension $L_0|K$ is finite.

Finally, it remains to show that $\Gal L_0|K_0 \isom \Gal L'|K'$
which also yields that $L_0|K_0$ is $p$-elementary. Observe that
$HN_0=G$ since it contains $HN=G$. Now we compute:
$\Gal L_0|K_0= \Gal L'|K_0\,/\,\Gal L'|L_0 = N_0/(H\cap N_0)\isom
H.N_0/H =G/H\isom N = \Gal L'|K'$. We have proved that condition
(\ref{semi-p-elcond}) also holds with $K_0\,,\,L_0$ in the place of
$K'\,,\,L'$.
\end{proof}
\n
{\bf In view of this lemma, we will assume in the sequel that all field
extensions are finite.}

\pars
Like $N$, also the additive group $(L',+)$ is a right $G$-module,
the scalar multiplication given by
\[(a,\tau)\;\mapsto\; a^\tau:=\tau^{-1} a
\;\;\;\mbox{ for all } a\in L'\,,\,\tau\in G\;.\]
Let us show:
\begin{lemma}
There is an embedding $\phi:\; N \longrightarrow (L',+)$
of right $G$-modules.
\end{lemma}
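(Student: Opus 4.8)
The plan is to realise $N$ inside $(L',+)$ by combining the normal basis theorem with the self-injectivity of the finite group algebra $\F_p[G]$. Note first what kind of object $N$ is: since $N$ is abelian, conjugation by elements of $N$ acts trivially on $N$, so the right $G$-module structure on $N$ factors through $H\isom G/N$, and $N$ is thus a finite $\F_p[G]$-module on which the subgroup $N$ acts trivially. On the other hand, the normal basis theorem applied to the finite Galois extension $L'|K$ exhibits $(L',+)$ as a free $\F_p[G]$-module, of rank $\dim_{\F_p}K$; in particular $(L',+)$ contains $\F_p[G]$ as a $G$-submodule, and it contains arbitrarily many copies of $\F_p[G]$ whenever $K$ is infinite.

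The second ingredient is that $\F_p[G]$ is a symmetric, hence quasi-Frobenius, algebra, so that every $\F_p[G]$-module embeds into a free one: it embeds into its injective hull, which, being injective and therefore (over a quasi-Frobenius ring) projective, is a direct summand of a free module. Applied to the finite module $N$, this produces a $G$-module embedding $N\hookrightarrow\F_p[G]^m$ for some $m\in\N$.

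It then remains to embed $\F_p[G]^m$ into $(L',+)$ as a $G$-module. If $K$ is infinite this is immediate from the first paragraph. If $K$ is finite, then $G=\Gal L'|K$ is a finite quotient of the procyclic group $\Gal(\overline{K}|K)$, hence cyclic; being cyclic and elementary abelian, $N$ is then either trivial or isomorphic to $\F_p$ with trivial $G$-action, so it embeds into the $G$-fixed subgroup $(K,+)$ of $(L',+)$, and one may take $m=1$, which is covered by the single embedded copy of $\F_p[G]$. Composing $N\hookrightarrow\F_p[G]^m\hookrightarrow(L',+)$ yields the desired embedding $\phi\colon N\longrightarrow(L',+)$ of right $G$-modules.

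The only delicate point is bookkeeping: one has to check that the rank $m$ produced by the self-injectivity step does not exceed the supply of copies of $\F_p[G]$ sitting inside $(L',+)$, and this is exactly what the case split ``$K$ infinite'' versus ``$K$ finite, whence $G$ is cyclic and $|N|\le p$'' takes care of. Everything else is a direct application of the normal basis theorem and of standard structure theory of finite group algebras.
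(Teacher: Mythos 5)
Your argument is correct, but it takes a genuinely different route from the paper's. The paper builds $\phi$ by hand: it applies the Normal Basis Theorem to the \emph{smaller} extension $K'|K$ to get a $K$-basis $\{b^\rho : \rho\in H\}$ of $K'$, picks an injective group homomorphism $\psi\colon N\to (K,+)$ (which exists by a case split on $K$ finite vs.\ infinite, just like yours), and defines
\[
\phi(\sigma_N^{})\;=\;\sum_{\rho\in H}\psi(\rho\,\sigma_N^{}\,\rho^{-1})\,b^\rho\;,
\]
then verifies $G$-equivariance and injectivity by direct computation. You instead apply the Normal Basis Theorem to $L'|K$ to recognize $(L',+)$ as a free $\F_p[G]$-module, and then invoke the self-injectivity of the Frobenius algebra $\F_p[G]$ to embed $N$ into a free module and hence into $(L',+)$. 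Both proofs hit the same bookkeeping issue and resolve it with the same dichotomy ($K$ infinite, or $K$ finite whence $G$ cyclic and $|N|\le p$).

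Your route is slicker and uses only soft structure theory, but it is purely existential and gives no control over the image of $\phi$. The paper's explicit formula has the extra feature that $\phi(N)\subset \F_p[\rho b\mid \rho\in H]$ once $\psi$ is chosen with values in $\F_p$; this is exactly what is exploited afterwards in Lemma~\ref{semi-p-elzus} to locate the coefficients of the additive polynomial ${\cal A}$ in the subring $K\cap\F_p[\rho b\mid\rho\in H]$, and ultimately to force ${\cal A}(X)\in{\cal O}_K[X]$ in the proof of Theorem~\ref{mpwe}. So for the lemma as stated your proof is complete and correct, but it would not support the subsequent refinement; the paper's more hands-on construction is doing double duty.
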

\begin{proof}
By the Normal Basis Theorem (cf.\ \fvklit{L}), the finite Galois
extension $K'|K$ admits a normal basis. That is, there exists $b\in K'$
such that $b^\rho$, $\rho \in \Gal K'|K\,$, is a basis of $K'$ over $K$.
Since $H$ is a set of representatives in $G$ for $\Gal K'|K$, we may
represent these conjugates as $b^\rho$, $\rho \in H$. Let $\psi:\; N
\rightarrow (K,+)$ be any homomorphism of groups (there is always at
least the trivial one), and set
\begin{equation}                            \label{Phi}
\phi(\sigma_N^{ }) := \sum_{\rho\in H} \psi(\rho\sigma_N^{ }\rho^{-1})\,
b^\rho\;\;\; \mbox{ for all }\sigma_N^{ } \in N\;.
\end{equation}
Since $\psi$ is a group homomorphism from $N$ into $(L',+)$, the same is
true for $\phi$. Given $\tau\in G$, we write $\tau=\tau_H^{ }
\tau_N^{ }\,$; then $b^{\rho\tau}= (b^{\rho\tau_H^{ }})^{\tau_N^{ }}=
b^{\rho\tau_H^{ }}$ since $b^{\rho\tau_H^{ }}\in K'$ and $\tau_N^{ }\in
N=\Gal L'|K'$. Observing also that $H=H\tau_H^{ }$ and using
(\ref{g^h}), we compute:

\begin{eqnarray*}
\phi(\sigma_N^{ })^\tau & = & \sum_{\rho\in H} \psi(\rho\sigma_N^{ }
\rho^{-1}) \, b^{\rho\tau}\; =\> \sum_{\rho\in H} \psi(\rho\tau_H^{-1}
\sigma_N^{ }(\rho\tau_H^{-1})^{-1})\, b^\rho\\
& = & \sum_{\rho\in H} \psi(\rho\sigma_N^{\,\tau_H^{ }}\rho^{-1})\,
b^\rho \;=\> \phi(\sigma_N^{\,\tau_H^{ }}) \;=\> \phi(\sigma_N^\tau)
\end{eqnarray*}
which shows that $\phi$ is a homomorphism of right $G$-modules.

Now we have to choose $\psi$ so well as to guarantee that $\phi$ becomes
injective. If $\phi(\sigma_N^{ })=0$ then $\psi(\rho\sigma_N^{ }
\rho^{-1})=0$ for all $\rho\in H$ since by our choice of $b$, the
conjugates $b^\rho$, $\rho\in H$, are linearly independent over $K$. In
particular, $\phi(\sigma_N^{ })=0$ implies $\psi(\sigma_N^{ })=0$.
Hence, $\phi$ will be injective if we are able to choose $\psi$ to be
injective. This is done
as follows. The elementary-abelian $p$-group $N$ may be viewed as a
finite-dimensional $\Fp$-vector space. If $K$ is an infinite field
(which by our general assumption has characteristic $p$), then it
contains $\Fp$-vector spaces of arbitrary finite dimension; so there
exists an embedding $\psi$ of $N$ into $(K,+)$. If $K$ is a finite
field, then all finite extensions of $K$ are cyclic, their Galois groups
being generated by a suitable power of the Frobenius $\varphi$;
consequently, $N$ must be cyclic. Since it is also elementary-abelian,
$N$ is isomorphic to $\Z/p\Z$ which is the additive group of $\Fp\subset
K$. Hence also in this case, $N$ admits an embedding $\psi$ into
$(K,+)$.
\end{proof}

By composition with $\phi$, the twisted homomorphism $\sigma\mapsto
\sigma_N^{ }$ is turned into a mapping $\sigma\mapsto\phi (\sigma_N^{ })$
from $G$ into $(L',+)$. We shall write $\phi(\sigma)$ instead of $\phi
(\sigma_N^{ })$, thereby considering the $G$-module homomorphism $\phi:\;
N\rightarrow (L',+)$ as being extended to $\phi:\; G\rightarrow (L',+)$.
By construction, the latter has kernel $H$ and is injective on $N$.
Further, it satisfies $\phi(\sigma\tau)=\phi((\sigma\tau)_N^{ })=
\phi(\sigma_N^\tau \tau_N^{ }) = \phi(\sigma_N^{ })^\tau+
\phi(\tau_N^{ })= \phi(\sigma)^\tau + \phi(\tau)$ showing that $\phi$ is
a twisted homomorphism in the following sense:
\begin{equation}
\phi(\sigma\tau)=\phi(\sigma)^\tau +\phi(\tau)\;\;\;\mbox{ for all }
\sigma,\tau\in G\;.
\end{equation}

\pars
We claim that there exists an element $\vartheta\in L'$ such that
\begin{equation}                            \label{sigmatheta}
\vartheta^\tau=\vartheta+\phi(\tau)\;\;\; \mbox{ for all }\tau\in G\;.
\end{equation}
Note that (\ref{sigmatheta}) determines $\vartheta$ up to addition of
elements from $K$. (Indeed, $\vartheta'$ satisfies the same equation
if and only if $(\vartheta-\vartheta')^\tau=\vartheta-\vartheta'$, i.e.,
if and only if $\vartheta-\vartheta'\in K$.)

The element $\vartheta$ can be constructed as follows. We choose an
element $a\in L'$ such that the trace $s:=\mbox{\rm Tr}_{L'|K} (a)=
\sum_{\sigma\in G} \sigma a=\sum_{\sigma\in G} a^\sigma$ is not zero
(we have seen in the foregoing proof that such an element exists: we
could choose $a$ to be the generator of a normal basis of $L'|K$; the
linear independence will then force the trace to be nonzero). We set
\begin{equation}
\vartheta:= - \frac{1}{s}\sum_{\sigma\in G} \phi(\sigma)\, a^\sigma\;.
\end{equation}
Given $\tau\in G$, we have $G\tau=G$ and
\[1=\frac{1}{s}\sum_{\sigma\in G} a^\sigma =
\frac{1}{s}\sum_{\sigma\in G} a^{\sigma\tau}\]
which we use to compute
\begin{eqnarray*}
\vartheta^\tau & = & - \frac{1}{s}\sum_{\sigma\in G} \phi(\sigma)^\tau
a^{\sigma\tau} \;=\; -\frac{1}{s}\sum_{\sigma\in G}\left((
\phi(\sigma)^\tau +\phi(\tau))a^{\sigma\tau}
-\phi(\tau)a^{\sigma\tau}\right)\\
& = & - \frac{1}{s}\sum_{\sigma\in G} \phi(\sigma\tau)a^{\sigma\tau}
+ \phi(\tau)\frac{1}{s}\sum_{\sigma\in G} a^{\sigma\tau}\\
& = & - \frac{1}{s}\sum_{\sigma\in G} \phi(\sigma)a^\sigma
+ \phi(\tau)\frac{1}{s}\sum_{\sigma\in G} a^\sigma
\;=\; \vartheta + \phi(\tau)\;.
\end{eqnarray*}
This proves that $\vartheta$ indeed satisfies (\ref{sigmatheta}).
\begin{remark}
The additive analogue of Hilbert's Satz 90 (cf.\ \fvklit{L} or
\fvklit{J}, chapter 1, section 15) says that ${\rm H}^1 (G,(L',+))=0$.
Since the twisted homomorphism $\phi:\; G\rightarrow (L',+)$ may be
interpreted as a $1$-cocycle, this implies that $\phi$ splits, which
indicates the existence of $\vartheta$. Replacing the twisted
homomorphism $\phi$ by an arbitrary $1$-cocycle in our above computation
provides a proof of this additive analogue.
\end{remark}

Since $H$ is the kernel of $\phi$, (\ref{sigmatheta}) yields that $H$ is
the group of all automorphisms of $L'|K$ which fix $\vartheta$. Since on
the other hand, by definition of $H=\Gal L'|L$ the fixed field of $H$ in
$L'$ is $L$, we know from Galois theory that $L=K(\vartheta)$. Let us
now compute the minimal polynomial $f$ of $\vartheta$ over $K$. The
group $N$ may be viewed as a system of representatives for the left
cosets of $G$ modulo $H$. Consequently, the elements $\vartheta^\tau$,
$\tau\in N$, are precisely all conjugates of $\vartheta$ over $K$. So
\[f(X)=\prod_{\tau\in N}(X-\vartheta^\tau)=
\prod_{\tau\in N}(X-\vartheta-\phi(\tau))={\cal A}(X-\vartheta)\;,\]
where
\[{\cal A}(X):=\prod_{\tau\in N}(X-\phi(\tau))\;.\]
The roots of ${\cal A}$ form the additive group $\phi(N)$. Since
we have chosen $\phi$ to be injective, we have $|\phi(N)|=|N|=\deg
{\cal A}$. By part a) of Corollary~\ref{charaddpol} it follows that
${\cal A}$ is an additive polynomial. In particular,
\[f(X)={\cal A}(X-\vartheta)={\cal A}(X)-{\cal A}(\vartheta)\;.\]
Since $f(X)\in K[X]$, we have ${\cal A}(X)\in K[X]$ and ${\cal A}
(\vartheta)\in K$. Since $\deg f=\deg {\cal A}=|N|=[L:K]=[K(\vartheta)
:K]$, $f$ is the minimal polynomial of $\vartheta$ over $K$.

We have proved:
\begin{theorem}                               \label{semi-p-el}
Let $L|K$ be an extension which satisfies condition
(\ref{semi-p-elcond}). Then there exist an additive polynomial
${\cal A}(X)\in K[X]$ and an element $\vartheta\in L$ such
that $L=K(\vartheta)$ and ${\cal A}(X)-{\cal A}(\vartheta)\in K[X]$ is
the minimal polynomial of $\vartheta$ over $K$.
\end{theorem}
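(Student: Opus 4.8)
The plan is to do everything on the Galois side and then transport the twisted homomorphism $\sigma\mapsto\sigma_N^{}$ into an additive situation inside $L'=L.K'$. First I would reduce to the case where $K'|K$, and hence $L'|K$, is finite: replacing $K'$ by the fixed field of the centralizer of $N$ in $G$ keeps condition (\ref{semi-p-elcond}) and makes $G$ act on $N$ through a finite quotient, while $L|K$ is already finite by linear disjointness. So from now on $G=\Gal L'|K$ is finite, $N=\Gal L'|K'$ is abelian of exponent $p$ and normal in $G$, $H=\Gal L'|L\isom G/N$ is a group complement of $N$, every $\sigma\in G$ decomposes uniquely as $\sigma=\sigma_H^{}\sigma_N^{}$, and $\sigma\mapsto\sigma_N^{}$ is a twisted homomorphism $G\to N$ with kernel $H$ inducing the identity on $N$.

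The core of the argument is to realize $N$ additively. I would first construct an embedding of right $G$-modules $\phi\colon N\hookrightarrow(L',+)$. Using the Normal Basis Theorem, pick $b\in K'$ whose $H$-conjugates $b^\rho$ form a $K$-basis of $K'$, and an injective group homomorphism $\psi\colon N\to(K,+)$: such a $\psi$ exists since $N$, being elementary-abelian, is an $\Fp$-vector space, which embeds into $(K,+)$ when $K$ is infinite; when $K$ is finite one checks $N$ is forced to be cyclic of order $p$, hence $\isom(\Fp,+)\subseteq(K,+)$. Then $\phi(\sigma_N^{}):=\sum_{\rho\in H}\psi(\rho\sigma_N^{}\rho^{-1})\,b^\rho$ is a group homomorphism; the identity $b^{\rho\tau}=b^{\rho\tau_H^{}}$ (as $b^{\rho\tau_H^{}}\in K'$ and $\tau_N^{}\in N$) together with $H\tau_H^{}=H$ shows it respects the $G$-action, and linear independence of the $b^\rho$ forces $\phi$ injective. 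Composing with $\sigma\mapsto\sigma_N^{}$ extends $\phi$ to a twisted homomorphism $\phi\colon G\to(L',+)$ with $\phi(\sigma\tau)=\phi(\sigma)^\tau+\phi(\tau)$ and kernel $H$.

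Next I would split this cocycle. The additive analogue of Hilbert 90, $H^1(G,(L',+))=0$, guarantees $\vartheta\in L'$ with $\vartheta^\tau=\vartheta+\phi(\tau)$ for all $\tau\in G$; explicitly one takes $a\in L'$ with nonzero trace $s$ (a normal basis generator works, by linear independence) and sets $\vartheta=-s^{-1}\sum_{\sigma\in G}\phi(\sigma)\,a^\sigma$, then verifies the defining identity by re-indexing $\sigma\mapsto\sigma\tau$. Since $H=\ker\phi$ is exactly the stabilizer of $\vartheta$ in $G$ and $L$ is the fixed field of $H$, Galois theory gives $L=K(\vartheta)$. Finally, the conjugates of $\vartheta$ over $K$ are the $\vartheta^\tau$ with $\tau$ ranging over the transversal $N$ of $G/H$, so the minimal polynomial is $f(X)=\prod_{\tau\in N}(X-\vartheta-\phi(\tau))={\cal A}(X-\vartheta)$, where ${\cal A}(X):=\prod_{\tau\in N}(X-\phi(\tau))$; the roots of ${\cal A}$ form the subgroup $\phi(N)$ of $(L',+)$ and are simple since $\phi$ is injective, so Corollary~\ref{charaddpol}(a) makes ${\cal A}$ additive. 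Then $f(X)={\cal A}(X)-{\cal A}(\vartheta)$, and $f\in K[X]$ forces ${\cal A}\in K[X]$ and ${\cal A}(\vartheta)\in K$, with $\deg f=|N|=[L:K]$ confirming that $f$ is indeed the minimal polynomial. The step I expect to be most delicate is producing the \emph{injective} $G$-module map $\phi$ — handling the finite-field case and checking the twisting compatibility — and, secondarily, pinning down the explicit splitting element $\vartheta$.
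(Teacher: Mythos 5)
Your proposal is correct and follows essentially the same route as the paper's proof: the reduction to finite $K'$ via the centralizer of $N$ in $G$, the normal-basis construction of the injective $G$-module map $\phi$ (with the same infinite/finite dichotomy for $\psi$), the explicit splitting element $\vartheta$ via the additive Hilbert 90 argument, and the identification of the minimal polynomial as ${\cal A}(X-\vartheta)$ with ${\cal A}$ additive by Corollary~\ref{charaddpol}(a). No substantive differences.
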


As an example, let us discuss an important special case. Let us assume
that $L|K$ is a Galois extension of degree $p$. Then its Galois group is
just $\Z/p\Z$, and the extension is thus $p$-elementary. In the above
setting, we may then choose $K'=K$ which yields $L'=L$, $G=N=\Z/p\Z$ and
$H=1$. The embedding $\phi:\; N \longrightarrow (L',+)$ may be chosen
``by hand'' to be the most natural one: $N=\Z/p\Z=(\Fp,+)\subset
(L',+)$. We obtain
\[{\cal A}(X)=\prod_{i\in \Fp}(X-i)=X^p - X\]
since the latter is the unique polynomial of degree $p$ which vanishes
on all elements of $\Fp$. The extension $L|K$ is thus generated by the
root $\vartheta$ of the polynomial $f(X)=X^p - X-{\cal A}(\vartheta)$
which we call an \bfind{Artin-Schreier polynomial}. The extension $L|K$
is an Artin-Schreier extension. So we have shown:
\begin{theorem}                           \label{GalpAS}
Every Galois extension of degree $p$ of a field of characteristic $p>0$
is an Artin-Schreier extension.
\end{theorem}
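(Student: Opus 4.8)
The plan is to deduce this as the degenerate case of Theorem~\ref{semi-p-el}, by running its construction in the situation where the auxiliary extension collapses. First I would observe that a group of prime order is cyclic, hence elementary-abelian, so a Galois extension $L|K$ of degree $p$ is already $p$-elementary. Thus condition (\ref{semi-p-elcond}) holds with the trivial choice $K'=K$ (the trivial extension is linearly disjoint from everything and $L.K'=L$ is Galois over $K$), which forces $L'=L$, $G=N=\Gal L|K\isom\Z/p\Z$, and $H=\{1\}$.

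Next I would supply the $G$-module embedding $\phi:\;N\longrightarrow(L,+)$ demanded by the argument. Because $N$ is abelian and $H$ is trivial, the conjugation action (\ref{g^h}) of $G$ on $N$ is trivial, and the corresponding fixed condition on the image is that $\phi(N)$ lie in the fixed field $K$; so I would take $\phi$ to be the literal inclusion $N=\Z/p\Z=(\Fp,+)\hookrightarrow K\subseteq L$, which is visibly injective and $G$-equivariant since $\Fp$ is pointwise fixed by $G$. (This is exactly the general construction with the normal-basis generator $b=1$ of the trivial extension $K|K$ and $\psi$ the inclusion $\Fp\hookrightarrow K$.) The induced twisted homomorphism $\phi:\;G\to(L,+)$ then has kernel $H=\{1\}$ and is injective on $N$.

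Then I would produce $\vartheta$ exactly as in the proof of Theorem~\ref{semi-p-el}: choose $a\in L$ with nonzero trace $s:=\mbox{\rm Tr}_{L|K}(a)$ (such $a$ exists, e.g.\ a normal-basis generator, by linear independence of the automorphisms), set $\vartheta:=-\frac1s\sum_{\sigma\in G}\phi(\sigma)\,a^\sigma$, and verify $\vartheta^\tau=\vartheta+\phi(\tau)$ for all $\tau\in G$ by the same reindexing computation (using $G\tau=G$ and $\frac1s\sum_{\sigma}a^{\sigma\tau}=1$). Injectivity of $\phi$ makes $H=\{1\}$ the stabilizer of $\vartheta$, so $K(\vartheta)=L$ by Galois theory. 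The conjugates of $\vartheta$ are the $\vartheta+i$, $i\in\Fp$, so its minimal polynomial is $\prod_{i\in\Fp}(X-\vartheta-i)={\cal A}(X)-{\cal A}(\vartheta)$ with ${\cal A}(X)=\prod_{i\in\Fp}(X-i)=X^p-X$ — the last equality because $X^p-X$ is monic of degree $p$ and vanishes on all of $\Fp$ by Fermat, or by part~a) of Corollary~\ref{charaddpol}. Hence $\vartheta^p-\vartheta={\cal A}(\vartheta)\in K$ and $L=K(\vartheta)$ is an Artin-Schreier extension.

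I do not expect a genuine obstacle here: once Theorem~\ref{semi-p-el} (equivalently, the additive analogue of Hilbert~90) is in hand, everything is a specialization. If one instead wanted a self-contained proof, the only real input would be the existence of an element of nonzero trace together with the characteristic-$p$ fact $\mbox{\rm Tr}_{L|K}(1)=p\cdot 1=0$, which is precisely what lets $1$ lie in the image of $\sigma-1$ (for a generator $\sigma$ of $G$) and thereby singles out $X^p-X$ as the right polynomial; reconstructing that Hilbert~90 step would then be the main piece of work.
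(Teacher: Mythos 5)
Your proposal is correct and follows essentially the same route as the paper: the paper likewise derives Theorem~\ref{GalpAS} as the special case of Theorem~\ref{semi-p-el} with $K'=K$, $L'=L$, $G=N=\Z/p\Z$, $H=\{1\}$, and $\phi$ the natural inclusion $\Fp\hookrightarrow(L,+)$, yielding ${\cal A}(X)=\prod_{i\in\Fp}(X-i)=X^p-X$. You have merely spelled out a few details that the paper leaves implicit by reference to the general construction.
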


\parm
Inspired by this special case, we want to investigate whether we can get
more information about the additive polynomial ${\cal A}$ if we
strengthen the hypotheses. For instance, $\phi$ may be injective even if
$\psi$ is not. In our special case, $N=\Z/p\Z$ was an irreducible
$G$-module, that is, it did not admit any proper nonzero $G$-submodule.
But if $N$ is an irreducible $G$-module, then every $G$-module
homomorphism $\phi$ can only have kernel $0$ or $N$, so if it does not
vanish, then it is injective. For $\phi$ as defined in (\ref{Phi}), we
obtain $\phi\not=0$ if $\psi\not=0$. So it will suffice to take $\psi:\;
N\rightarrow (\Fp\,,+)$ as a nonzero (additive) character; it exists
since $N$ is a non-trivial $p$-group. With this choice of $\psi$, we
obtain
\[\phi(N)\subset \sum_{\rho\in H} \Fp\, b^\rho =
\sum_{\rho\in H} \Fp\, \rho b\;.\]
Since the coefficients of the polynomial ${\cal A}$ are the elementary
symmetric polynomials of the elements $\phi(\tau)$, $\tau\in N$, they
lie in the ring $\Fp[\,\rho b\mid\rho\in H]$.

The condition that $N$ be an irreducible $G$-module has turned out to be
of certain importance. It is satisfied in the following special case:
\begin{lemma}                               \label{minGmod}
Assume that $L|K$ is minimal with the property (\ref{semi-p-elcond}),
that is, there is no proper non-trivial subextension with the same
property. Then $N$ is an irreducible $G$-module.
\end{lemma}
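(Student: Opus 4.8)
The plan is to prove the contrapositive: assuming that $N$ admits a proper nonzero $G$-submodule, I will produce a proper non-trivial subextension of $L|K$ which again satisfies (\ref{semi-p-elcond}), contradicting the minimality hypothesis. Recall that the $G$-module structure on $N$ is conjugation, so a $G$-submodule of $N$ is the same thing as a subgroup of $N$ which is normal in $G$; in particular any such $N_1$ satisfies $N_1\lhd G$.

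So suppose $N_1$ is a subgroup of $N$ with $N_1\subsetuneq N$, $N_1\ne 1$, and $N_1\lhd G$. Since $N_1\lhd G$, the set $HN_1$ is a subgroup of $G$, and from $H\cap N=1$ (hence $H\cap N_1=1$) together with $N_1\subsetuneq N$ we obtain $H\subsetuneq HN_1\subsetuneq HN=G$. Let $L_1$ be the fixed field of $HN_1$ in $L'$. By the Galois correspondence these strict inclusions of subgroups translate into $K=(L')^G\subsetuneq L_1\subsetuneq (L')^H=L$, so $L_1|K$ is a proper non-trivial subextension of $L|K$.

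Next I would check that $L_1|K$ satisfies (\ref{semi-p-elcond}) with the same auxiliary field $K'$. The extension $K'|K$ is Galois by assumption. Since $HN_1$ contains $H$, the subgroup of $G$ generated by $HN_1$ and $N$ is all of $HN=G$; hence $L_1\cap K'$, being the fixed field of that subgroup, equals $(L')^G=K$, and as $K'|K$ is Galois this forces $L_1|K$ and $K'|K$ to be linearly disjoint. By Dedekind's modular law applied to $N_1\subseteq N$ and the subgroup $H$, one has $HN_1\cap N=(H\cap N)N_1=N_1$, so the compositum $L_1.K'$ is the fixed field of $N_1$ in $L'$; consequently $L_1.K'|K'$ is Galois with group $N/N_1$, which is a quotient of the elementary-abelian $p$-group $N$ and hence itself elementary-abelian, so that $L_1.K'|K'$ is $p$-elementary. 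Finally $L_1.K'|K$ is Galois because $N_1\lhd G$. Thus $L_1|K$ has property (\ref{semi-p-elcond}), contradicting the minimality of $L|K$, and the lemma follows.

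I expect the only point requiring care to be the Galois-correspondence bookkeeping: the strictness $H\subsetuneq HN_1\subsetuneq G$, the identity $HN_1\cap N=N_1$ via the modular law, and the equality $L_1\cap K'=K$, together with the standard fact that for two extensions one of which is Galois, intersecting in the base field is equivalent to linear disjointness. Everything else is formal, and by the preceding lemma all fields involved may be taken finite, so no infinitary subtleties arise.
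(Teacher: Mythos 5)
Your argument is correct and is essentially the paper's own, presented in contrapositive form: both proofs use the correspondence $M\mapsto HM\mapsto L_1=(L')^{HM}$, the identity $HM\cap N=M$ (you via Dedekind's modular law, the paper via the unique factorization $\sigma=\sigma_H^{}\sigma_N^{}$), and the observation that $L_1.K'=(L')^{M}$ is Galois over $K$ because $M\lhd G$, to conclude that $L_1|K$ inherits property~(\ref{semi-p-elcond}). You spell out the linear disjointness and $p$-elementariness checks a bit more explicitly than the paper does, but the mechanism is the same.
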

\begin{proof}
Assume that $M$ is a $G$-submodule of $N$, that is, $M$ is a normal
subgroup of $G$. Then $HM$ is a subgroup of $G$ containing $H$.
In view of the unique representation (\ref{sigNsigH}), we have $HM=H$ if
and only if $M=1$ and $HM=G$ if and only if $M=N$. Note that the fixed
field $L'_1$ of $M$ in $L'$ is a Galois extension of $K$ containing
$K'$. Further, the fixed field $L_1$ of $HM$ is contained in $L$, and it
satisfies $L_1.K'=L'_1$ since $HM\cap N=M\cap N = M$. Consequently, also
$L_1|K$ has property (\ref{semi-p-elcond}).

Suppose now that $L|K$ is minimal with the property
(\ref{semi-p-elcond}). Then $L_1=L$ or $L_1=K$. Hence $HM=H$ or $HM=G$,
that is, $M=1$ or $M=N$, showing that the $G$-module $N$ is irreducible.
\end{proof}

We summarize our preceding discussion in the following
\begin{lemma}                               \label{semi-p-elzus}
Assume that $L|K$ is minimal with the property (\ref{semi-p-elcond}). If
$K'|K$ is infinite, we may replace it by a suitable finite subextension.
For every $b\in K'$ generating a normal basis of $K'|K$, and for every
nonzero additive character $\psi:\; N\rightarrow (\Fp,+)$, the
$G$-module homomorphism $\phi$ defined in (\ref{Phi}) is injective.
Moreover, the coefficients of the correponding additive polynomial
${\cal A}(X)$ lie in the ring
\[K\cap \Fp[\,\rho b\mid\rho\in H]\;.\]
\end{lemma}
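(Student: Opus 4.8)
The plan is to assemble the three assertions from material already developed in this section; no new idea is needed, since the lemma is essentially a repackaging of the discussion between Theorem~\ref{semi-p-el} and Lemma~\ref{minGmod}. For the first assertion I would invoke the first lemma of this section: whenever $L|K$ satisfies (\ref{semi-p-elcond}) with $K'|K$ infinite, one may replace $K'$ by a finite subextension (and $L'$ by the corresponding compositum) for which (\ref{semi-p-elcond}) still holds. The point to note is that minimality of $L|K$ with property (\ref{semi-p-elcond}) is a statement about the subextensions of $L|K$ only, not about the auxiliary field $K'$, so it is untouched by this replacement. Thus we may and do assume $K'|K$ finite, fix $b\in K'$ generating a normal basis of $K'|K$, and fix a nonzero additive character $\psi\colon N\to(\Fp,+)$.

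For the second assertion, I would apply Lemma~\ref{minGmod}: since $L|K$ is minimal with (\ref{semi-p-elcond}), the $G$-module $N$ is irreducible, so the kernel of the $G$-module homomorphism $\phi\colon N\to(L',+)$ defined in (\ref{Phi}) is either all of $N$ or trivial, and it suffices to show $\phi\ne 0$. I would pick $\sigma_N\in N$ with $\psi(\sigma_N)\ne 0$, which exists because $\psi\ne 0$. In the sum (\ref{Phi}) the coefficient of $b=b^{1}$ (the term with $\rho=1\in H$) is $\psi(\sigma_N)$, a nonzero element of $\Fp\subseteq K$; since the conjugates $b^\rho$, $\rho\in H$, are $K$-linearly independent, this forces $\phi(\sigma_N)\ne 0$, hence $\phi$ is injective.

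For the third assertion, I would recall from the construction preceding Theorem~\ref{semi-p-el} that ${\cal A}(X)=\prod_{\tau\in N}(X-\phi(\tau))$, so up to sign its coefficients are the elementary symmetric functions of the finitely many elements $\phi(\tau)$, $\tau\in N$. By (\ref{Phi}) each $\phi(\tau)$ is an $\Fp$-linear combination of the elements $\rho b$, $\rho\in H$ (using that $\psi$ has values in $\Fp$ and that $\rho\mapsto\rho^{-1}$ permutes the group $H$), hence lies in $\Fp[\rho b\mid\rho\in H]$; this ring is closed under sums and products, so the coefficients of ${\cal A}$ lie in it too. On the other hand, as already observed, $f(X)={\cal A}(X)-{\cal A}(\vartheta)\in K[X]$ and ${\cal A}$ has zero constant term, whence ${\cal A}(X)\in K[X]$. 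Combining the two inclusions gives that the coefficients of ${\cal A}$ lie in $K\cap\Fp[\rho b\mid\rho\in H]$. I do not expect any genuine obstacle: the only point requiring a moment's care is the compatibility of passing to a finite $K'$ with minimality of $L|K$, which is immediate once one observes that minimality is a property of $L|K$ alone.
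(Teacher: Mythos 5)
Your proposal is correct and follows essentially the same route as the paper: the lemma is indeed just a summary of the discussion surrounding Lemma~\ref{minGmod}, and you correctly combine the finiteness reduction from the first lemma of the section, the irreducibility of $N$ from Lemma~\ref{minGmod} together with the observation (via the linear independence of the conjugates $b^\rho$, $\rho\in H$) that $\psi\ne0$ forces $\phi\ne0$, and the fact that the coefficients of ${\cal A}(X)=\prod_{\tau\in N}(X-\phi(\tau))$ are elementary symmetric functions of elements in $\sum_{\rho\in H}\Fp\,\rho b\subset\Fp[\rho b\mid\rho\in H]$ while also lying in $K$ because $f(X)={\cal A}(X)-{\cal A}(\vartheta)\in K[X]$ and ${\cal A}$ has zero constant term.
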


\mn
\begin{exercise}
Let $\chara K=p>0$ and $L|K$ be an Artin-Schreier extension and
$\vartheta$ an \bfind{Artin-Schreier generator} of $L|K$, that is,
$L=K(\vartheta)$ and $\vartheta^p-\vartheta\in K$. Show that all other
Artin-Schreier generators of $L|K$ are of the form $i\vartheta+c$ with
$i\in\{1,2,\ldots,p-1\}$ and $c\in K$. Can something similar be said in
the setting of Theorem~\ref{semi-p-el}? (Hint: use the uniqueness
statement following equation (\ref{sigmatheta})).
\end{exercise}

%
%
\section{Minimal purely wild extensions}    \label{sectmpwe}
This section is devoted to the proof of Theorem~\ref{mpwe} which shows
the important connection between purely wild (and in particular,
immediate) extensions of henselian fields of positive characteristic and
additive polynomials.

Before we continue, we put together several facts from ramification
theory that can be found in [En], [N] and [Ku12] or can be deduced
easily from other facts (exercise for the reader). For a field $L$, we
denote by $\tilde{L}$ its algebraic closure and by $\Gal L$
the absolute Galois group $\Gal \tilde{L}|L$ of $L$. Recall that for a
henselian field $(K,v)$, $K^r$ denotes the ramification field of the
extension $(K\sep|K,v)$.

\begin{theorem}                             \label{ram}
Let $(K,v)$ be a henselian field and $p$ the characteristic exponent of
$Kv$. Then the following assertions hold:
\sn
a) $\Gal K^r$ is a normal subgroup of $\Gal K$ and $K^r|K$ is a Galois
extension.
\n
b) $\Gal K^r$ is a pro-$p$-group, so the separable-algebraic closure of
$K$ is a $p$-extension of $K^r$.
\n
c) The value group $vK^r$ consists of all elements in the ordered
divisible hull of $vK$ whose order modulo $vK$ is prime to $p$. The
residue field $K^rv$ is the separable-algebraic closure of $Kv$.
\n
d) If $vK^r=vK$ (we say that $K^r|K$ is \bfind{unramified}), then
for every Galois subextension $K'|K$ of $K^r|K$, we have $\Gal K'|K
\isom\Gal K'v|Kv$.
\n
e) Every finite extension $(K_2|K_1,v)$, where $K\subseteq K_1\subseteq
K_2\subseteq K^r$, is defectless.
\n
f) If $L$ is an algebraic extension of $K$, then $L^r=L.K^r$, and the
extensions $(L^r|K^r,v)$ and  $(L|K,v)$ have the same defect.
\n
g) If $(L,v)$ is an immediate henselian extension of $(K,v)$, not
necessarily algebraic, then $L^r=L.K^r$.
\end{theorem}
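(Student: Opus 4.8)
The plan is to follow the announcement in the introductory sentence: parts a)--e) are the classical structure theory of the ramification field, for which I would cite [En], [N] and [Ku12] and only indicate the mechanism, while f) and g) are then deduced from a)--e) together with the fact that $K^r$ is the maximal tame extension. For a): $\Gal K^r$ is the wild inertia subgroup, defined inside $\Gal K$ by a recipe stable under the $\Gal K$-action, hence normal, so $K^r|K$ is Galois. For b): $\Gal K^r$ is built from the higher ramification groups, which are $p$-groups, so it is pro-$p$ and $K\sep|K^r$ is a $p$-extension. For c): every finite tame extension has value group generated over $vK$ by $m$-th roots with $m$ prime to $p$ and separable residue extension, and $K^r$ is the union of all of them, which yields the stated description of $vK^r$ and $K^rv$. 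Part d) is the classical inertia-field correspondence, and e) is the statement that tame extensions are defectless, a consequence of Ostrowski's lemma. The real content lies in f) and g).

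For f), with $L|K$ algebraic, write $K^r=\bigcup K_1$ over its finite subextensions $K_1|K$. Each composite $L.K_1|L$ has $(v(L.K_1):vL)$ dividing $(vK_1:vK)$, hence prime to $p$, residue extension generated over $Lv$ by separable elements of $K_1v$, hence separable, and trivial defect because tameness is preserved under base change ([Ku12]); so $L.K_1|L$ is tame, and therefore $L.K^r|L$ is tame and $L.K^r\subseteq L^r$. Conversely every finite subextension of $L^r|L$ is tame by e), so for $\vartheta\in L^r$ one has $[L(\vartheta):L]=(vL(\vartheta):vL)\cdot[L(\vartheta)v:Lv]$; since $vL(\vartheta)$ is finitely generated over $vL$ inside $vL^r$ and $L(\vartheta)v$ is finite over $Lv$ inside $L^rv$, both already lie inside $v(L.K_1)$ and $(L.K_1)v$ for $K_1$ large enough, so the tame extension $L.K_1(\vartheta)|L.K_1$ has no value-group and no residue-field growth and is hence trivial, that is, $\vartheta\in L.K^r$. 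This gives $L^r=L.K^r$. The equality of defects follows from multiplicativity of the defect in the two towers $K\subseteq L\subseteq L.K^r$ and $K\subseteq K^r\subseteq L.K^r$, since $K^r|K$ and $L.K^r|L$ contribute defect $1$.

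For g), with $(L|K,v)$ immediate but possibly transcendental, one has $vL=vK$ and $Lv=Kv$, so by c) the value group $vL^r$ and residue field $L^rv$ coincide verbatim with $vK^r$ and $K^rv$. The composite $L.K^r|L$ is tame by the same computation as in f) --- over an immediate base the fundamental inequality together with the defectlessness of $K_1|K$ even forces $v(L.K_1)=vK_1$ and $(L.K_1)v=K_1v$ --- so $L.K^r\subseteq L^r$, and moreover $v(L.K^r)=vK^r=vL^r$ and $(L.K^r)v=K^rv=L^rv$. Then the exhaustion argument of f) goes through unchanged: any $\vartheta\in L^r$ generates a finite tame extension of $L$ whose value group and residue field are already realized in some finite layer $L.K_1$, forcing $\vartheta\in L.K_1\subseteq L.K^r$. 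Hence $L^r=L.K^r$.

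I expect the genuine obstacles to be two. First, the input that a tame extension stays defectless after an arbitrary base change: this is the one place where one uses ramification theory (Ostrowski, Abhyankar) rather than bookkeeping, and in g) one must additionally keep in mind that ``$L.K_1$'' is a well-defined valued field --- it is, because $v$ extends uniquely inside a fixed algebraic closure of a henselian field. Second, the limit step in the proofs of f) and g): since $L.K^r|L$ is in general infinite, one cannot apply a single degree formula, but must first check that the value group and residue field of $L^r$ over $L$ are exhausted by those of the finite layers $L.K_1$, and then that a tame algebraic extension with no value-group and no residue-field growth is trivial. Neither point is deep, but these are the steps where the argument does more than formal manipulation.
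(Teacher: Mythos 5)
The paper itself offers no proof of Theorem~\ref{ram}: it is introduced with the sentence that these facts ``can be found in [En], [N] and [Ku12] or can be deduced easily from other facts (exercise for the reader),'' so there is no internal argument to compare against. Your decision to treat a)--e) as citable and to concentrate on f) and g) is therefore consonant with the paper's intent, and the overall strategy --- $K^r$ is the maximal tame extension, $L.K^r|L$ is tame, and an exhaustion over finite layers $L.K_1$ --- is a legitimate route.

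There is, however, a genuine gap at exactly the place you flag but do not close. For the inclusion $L^r\subseteq L.K^r$ you need, for $\vartheta\in L^r$, that $vL(\vartheta)$ and $L(\vartheta)v$ already lie in $v(L.K_1)$ and $(L.K_1)v$ for $K_1$ large. All you know a priori is $vL(\vartheta)\subseteq vL^r$ and $L(\vartheta)v\subseteq L^rv$; to conclude you must show $vL^r\subseteq v(L.K^r)$ and $L^rv\subseteq (L.K^r)v$, which is dangerously close to what you are trying to prove (the reverse inclusions come for free from $L.K^r\subseteq L^r$). This can be rescued, but it is not ``bookkeeping'': one must prove $vL^r=vL+vK^r$ and $L^rv=Lv.K^rv$ directly from the description in c). For the value groups, take $\gamma\in vL^r$ with $n\gamma=\alpha\in vL$, $n$ prime to $p$, and $p^aN'\alpha\in vK$ with $N'$ prime to $p$; then $p^a\gamma\in vK^r$ by prime-to-$p$ divisibility of $vK^r$, and B\'ezout applied to $\gcd(p^a,n)=1$ writes $\gamma$ as an integer combination of $p^a\gamma\in vK^r$ and $\alpha\in vL$. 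For residue fields one needs that a purely inseparable extension of a separably closed field is separably closed, which gives $(Lv)^{\rm sep}=Lv.(Kv)^{\rm sep}$. None of this appears in your text; writing ``both already lie inside $v(L.K_1)$'' simply asserts the nontrivial step. The defect equality in f) has a smaller version of the same problem: ``multiplicativity in the two towers'' cannot be applied verbatim because $K^r|K$ is infinite; you must first pass to a finite layer $K_1$, get $d(L.K_1|K_1)=d(L|K)$ from the finite square, and then argue that degree, ramification index and inertia degree of $L.K_1|K_1$ stabilize at those of $L.K^r|K^r$.

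It is also worth noting that the standard proof of the first part of f) in [En] and [N] avoids your exhaustion entirely and is cleaner: for algebraic $L|K$ one has $L^{\rm sep}=L.K^{\rm sep}$, so $\Gal L$ embeds in $\Gal K$, and the ramification group of $L$ --- being defined purely in terms of the valuation on $K^{\rm sep}$ --- is $\Gal L\cap\Gal K^r$; then $L^r=L.K^r$ is immediate from Galois correspondence, and $\Gal K^r$ being pro-$p$ yields the defect statement via Ostrowski's lemma without any limit argument. That route is not available verbatim for g) (where $L|K$ is transcendental, so $\Gal L$ is not a subgroup of $\Gal K$), and there your value-group and residue-field computation, made rigorous as above, is indeed the natural way to proceed.
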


Let $(K,v)$ be a henselian field which is not tame and thus admits
purely wild extensions (see Theorem~\ref{pank}). Theorem~\ref{mpwe} will
follow from Theorem~\ref{semi-p-el} if we are able to show that every
minimal purely wild extension $L|K$ satisfies condition
(\ref{semi-p-elcond}) which is the hypothesis the latter theorem. As a
natural candidate for an extension $K'|K$ which is Galois and linearly
disjoint from $L|K$, we can take the extension $K^r|K$. By part f) of
Theorem~\ref{ram} we know that $L.K^r=L^r$. We set
\sn
$\bullet\quad {\cal G}:=\Gal K$,\n
$\bullet\quad {\cal N}:=\Gal K^r$, which is a normal subgroup of
${\cal G}$ and a pro-$p$-group,\n
$\bullet\quad {\cal H}:=\Gal L$, which is a maximal proper subgroup of
${\cal G}$ since $L|K$ is a minimal non-trivial extension, and which
satisfies ${\cal N}.{\cal H}={\cal G}$ since $K^r|K$ is linearly
disjoint from $L|K$\n
$\bullet\quad {\cal D}:={\cal N}\cap {\cal H}= \Gal L.K^r =\Gal L^r\;$.
\sn
The next lemma examines this group theoretical situation.
\begin{lemma}
Let ${\cal G}$ be a profinite group with maximal proper subgroup
${\cal H}$. Assume that the non-trivial pro-$p$-group ${\cal N}$ is a
normal subgroup of ${\cal G}$ not contained in ${\cal H}$. Then
${\cal D} ={\cal N}\cap {\cal H}$ is a normal subgroup of ${\cal G}$ and
the finite factor group ${\cal N}/{\cal D}$ is an elementary-abelian
$p$-group. Further, ${\cal N}/{\cal D}$ is an irreducible right
${\cal G}/{\cal D}$-module.
\end{lemma}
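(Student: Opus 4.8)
The plan is to deduce the lemma from the maximality of ${\cal H}$ together with Dedekind's modular law, after first replacing ${\cal N}$ by its Frattini quotient in order to make it elementary abelian. I would start from the one consequence of maximality that is used throughout: since ${\cal N}\lhd{\cal G}$ and ${\cal N}\not\subseteq{\cal H}$, the subgroup ${\cal N}{\cal H}$ properly contains ${\cal H}$, hence ${\cal N}{\cal H}={\cal G}$. Let $\Phi:=\Phi({\cal N})$ be the Frattini subgroup of the pro-$p$-group ${\cal N}$; it is a closed characteristic subgroup of ${\cal N}$, hence normal in ${\cal G}$, and ${\cal N}/\Phi$ is elementary abelian. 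In particular $\Phi{\cal H}$ is a subgroup of ${\cal G}$.

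The crux is the claim $\Phi\subseteq{\cal H}$. If not, then $\Phi{\cal H}$ properly contains ${\cal H}$, so $\Phi{\cal H}={\cal G}$ by maximality, and Dedekind's modular law (valid since $\Phi\subseteq{\cal N}$) gives ${\cal N}={\cal N}\cap\Phi{\cal H}=\Phi({\cal N}\cap{\cal H})$; but the elements of $\Phi$ are non-generators of ${\cal N}$, so this forces ${\cal N}=\overline{\langle{\cal N}\cap{\cal H}\rangle}={\cal N}\cap{\cal H}\subseteq{\cal H}$, a contradiction. Hence $\Phi\subseteq{\cal N}\cap{\cal H}={\cal D}$, and we may pass to $\bar{\cal G}:={\cal G}/\Phi$; writing $\bar{\cal N}:={\cal N}/\Phi$, $\bar{\cal H}:={\cal H}/\Phi$, $\bar{\cal D}:={\cal D}/\Phi$, we have that $\bar{\cal N}$ is elementary abelian and normal in $\bar{\cal G}$, $\bar{\cal H}$ is again a maximal proper subgroup, $\bar{\cal N}\bar{\cal H}=\bar{\cal G}$, $\bar{\cal N}\not\subseteq\bar{\cal H}$, and $\bar{\cal D}=\bar{\cal N}\cap\bar{\cal H}$. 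I expect this to be the only non-routine point — it is where the non-generator property of $\Phi({\cal N})$, and with it the pro-$p$ hypothesis, actually does the work.

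From here everything is quick. The group $\bar{\cal D}=\bar{\cal N}\cap\bar{\cal H}$ is normalized by $\bar{\cal H}$ (being the intersection of the ${\cal G}$-normal subgroup $\bar{\cal N}$ with $\bar{\cal H}$) and is centralized, hence normalized, by the abelian group $\bar{\cal N}$; so it is normal in $\bar{\cal N}\bar{\cal H}=\bar{\cal G}$, which gives ${\cal D}\lhd{\cal G}$. Moreover ${\cal N}/{\cal D}\isom\bar{\cal N}/\bar{\cal D}$ is a quotient of the elementary abelian group $\bar{\cal N}$, hence elementary abelian, and it is finite because $[{\cal N}:{\cal D}]=[{\cal N}{\cal H}:{\cal H}]=[{\cal G}:{\cal H}]$ and the maximal proper closed subgroup ${\cal H}$ of the profinite group ${\cal G}$ is open; it is non-trivial since ${\cal N}\not\subseteq{\cal H}$. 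So ${\cal N}/{\cal D}$ is a non-trivial finite elementary abelian $p$-group.

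Finally, a ${\cal G}/{\cal D}$-submodule of the right ${\cal G}/{\cal D}$-module ${\cal N}/{\cal D}\isom\bar{\cal N}/\bar{\cal D}$ corresponds to a subgroup $\bar{\cal M}$ with $\bar{\cal D}\subseteq\bar{\cal M}\subseteq\bar{\cal N}$ invariant under conjugation by $\bar{\cal G}$, which, as $\bar{\cal M}$ lies in the abelian group $\bar{\cal N}$, is the same as $\bar{\cal M}\lhd\bar{\cal G}$. Then $\bar{\cal M}\bar{\cal H}$ is a subgroup lying between $\bar{\cal H}$ and $\bar{\cal G}$, so by maximality it equals one of them: $\bar{\cal M}\bar{\cal H}=\bar{\cal H}$ forces $\bar{\cal M}\subseteq\bar{\cal N}\cap\bar{\cal H}=\bar{\cal D}$ (the zero submodule), and $\bar{\cal M}\bar{\cal H}=\bar{\cal G}$ forces, via Dedekind's law, $\bar{\cal N}=\bar{\cal M}(\bar{\cal N}\cap\bar{\cal H})=\bar{\cal M}\bar{\cal D}=\bar{\cal M}$ (the whole module). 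Hence ${\cal N}/{\cal D}$ is irreducible. In the write-up I would pause only over the standard profinite facts tacitly used — that maximal proper closed subgroups of profinite groups are open, that $\Phi({\cal N})$ is closed and characteristic, and that Dedekind's law and the correspondence theorem behave as expected for closed subgroups — none of which is problematic.
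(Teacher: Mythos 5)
Your proof is correct and follows essentially the same strategy as the paper's: both establish ${\cal H}{\cal N}={\cal G}$ from maximality, both show $\Phi({\cal N})\subseteq{\cal D}$ via the non-generator property of the Frattini subgroup so that ${\cal N}/{\cal D}$ inherits elementary-abelianness from ${\cal N}/\Phi({\cal N})$, and both derive normality of ${\cal D}$ and irreducibility of ${\cal N}/{\cal D}$ from the maximality of ${\cal H}$. The only stylistic difference is that you invoke Dedekind's modular law explicitly and pass to the quotient ${\cal G}/\Phi({\cal N})$, whereas the paper packages the same maximality argument into the intermediate statement that ${\cal D}$ is a maximal ${\cal H}$-invariant subgroup of ${\cal N}$, which it then reuses for both the Frattini containment and the irreducibility step.
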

\begin{proof}
By the maximality of ${\cal H}$, we have ${\cal H} {\cal N}={\cal G}$.
Since ${\cal N}\not\subset {\cal H}$, we have that ${\cal D}$ is a
proper subgroup of ${\cal N}$. Since every maximal proper subgroup of a
profinite group is of finite index, we have that $({\cal N}:{\cal D})=
({\cal G}:{\cal H})$ is finite. Observe that ${\cal D}$ is
\bfind{${\cal H}$-invariant} (which means that ${\cal D}^\sigma=
{\cal D}$ for every $\sigma\in {\cal H}$). This is true since ${\cal N}
\lhd {\cal G}$ and ${\cal H}$ are ${\cal H}$-invariant. Assume that
${\cal E}$ is an ${\cal H}$-invariant subgroup of ${\cal N}$
containing ${\cal D}$. Then ${\cal H} {\cal E}$ is a subgroup of
${\cal G}$ containing ${\cal H}$. From the maximality of ${\cal H}$ it
follows that either ${\cal H}{\cal E}={\cal H}$ or ${\cal H}{\cal E}
={\cal G}$, whence either ${\cal E}={\cal D}$ or ${\cal E}={\cal N}$
(this argument is as in the proof of Lemma~\ref{minGmod}). We have
proved that ${\cal D}$ is a maximal ${\cal H}$-invariant subgroup of
${\cal N}$.

Now let $\Phi({\cal N})$ be the Frattini subgroup of ${\cal N}$, i.e.,
the intersection of all maximal open subgroups of ${\cal N}$. Since
${\cal D}\not={\cal N}$, we can pick some maximal
proper subgroup of ${\cal N}$ containing ${\cal D}$, and since it also
contains $\Phi({\cal N})$, it follows that ${\cal D}\Phi({\cal N})
\not= {\cal N}$. Being a characteristic subgroup of ${\cal N}$, the
Frattini subgroup $\Phi({\cal N})$ is ${\cal H}$-invariant like
${\cal N}$. Consequently, also the group ${\cal D}\Phi({\cal N})$ is
${\cal H}$-invariant. From the maximality of ${\cal D}$ we deduce that
${\cal D}\Phi({\cal N})={\cal D}$, showing that
\[
\Phi({\cal N})\subset {\cal D}\;.
\]
On the other hand, the factor
group ${\cal N}/\Phi({\cal N})$ is a (possibly infinite dimensional)
$\Fp$-vector space (cf.\ [R--Zal], part (b) of Lemma~2.8.7). In view of
$\Phi({\cal N})\subset {\cal D}$, this yields that also ${\cal D}$ is a
normal subgroup of ${\cal N}$ and that also ${\cal N}/{\cal D}$ is an
elementary-abelian $p$-group. Since ${\cal D}$ is ${\cal H}$-invariant,
${\cal D}\lhd {\cal N}$ implies that
\[{\cal D}\lhd {\cal H} {\cal N}= {\cal G}\;.\]
As a normal subgroup of ${\cal G}$, ${\cal N}$ is a ${\cal G}$-module,
and in view of ${\cal D}\lhd {\cal G}$ it follows that ${\cal N}/{\cal
D}$ is a ${\cal G}/{\cal D}$-module. If it were reducible then there
would exist a proper subgroup ${\cal E}$ of ${\cal N}$ such that
${\cal E}/{\cal D}$ is a non-trivial ${\cal G}/{\cal D}$-module. But
then, ${\cal E}$ must be a normal subgroup of ${\cal G}$ properly
containing ${\cal D}$; in particular, ${\cal E}$ would be a proper
${\cal H}$-invariant subgroup of ${\cal N}$, in contradiction to the
maximality of ${\cal D}$.
\end{proof}
\n
This lemma shows that $L^r|K$ is Galois and $L^r=L.K^r$ is a finite
$p$-elementary extension of $K^r$. Hence $L|K$ satisfies
(\ref{semi-p-elcond}) with $K'=K^r$. We apply Theorem~\ref{semi-p-el} to
obtain an additive polynomial ${\cal A}(X)\in K[X]$ and an element
$\vartheta\in L$ such that $L=K(\vartheta)$ and that ${\cal A}(X)-
{\cal A} (\vartheta)$ is the minimal polynomial of $\vartheta$ over $K$.
Since $L|K$ is a minimal purely wild extension by our assumption, it is
in particular minimal with property (\ref{semi-p-elcond}) and thus
satisfies the hypothesis of Lemma~\ref{semi-p-elzus}. Hence, the
extension $K^r|K$ can be replaced by a finite subextension $K'|K$, and
${\cal A} (X)$ may be chosen such that its coefficients lie in the ring
$K\cap \Fp[\,\rho b\mid\rho \in H]$, where $b$ is the generator of a
normal basis of $K'|K$. Since $vK$ is cofinal in
$v\tilde{K}=\widetilde{vK}$, we may choose some $c\in K$ such that
$vcb\geq 0$. Since $(K,v)$ is henselian by assumption, it follows that
$v\sigma (cb)=vcb\geq 0$ for all $\sigma \in \Gal K$. On the other hand,
$cb$ is still the generator of a normal basis of $K'|K$. So we may
replace $b$ by $cb$, which yields that $K\cap \Fp[\,\rho b\mid \rho\in
H]\subset {\cal O}_K$ and consequently, that ${\cal A} (X)\in
{\cal O}_K[X]$.

\pars
Now assume in addition that $(K|k,v)$ is an immediate extension of
henselian fields. Then we may infer from part g) of Theorem~\ref{ram}
that $K^r=k^r.K\,$. So the Galois extension $K'$ of $K$ is the
compositum of $K$ with a suitable Galois extension $k'$ of $k$. In this
case, $b$ may be chosen to be already the generator of a normal basis of
$k'$ over $k$; it will then also be the generator of a normal basis of
$K'$ over $K$. With this choice of $b$, we obtain that the ring $K\cap
\Fp[\,\rho b\mid \rho\in H]$ is contained in $K\cap k'=k$, whence ${\cal
A} (X)\in {\cal O}_k[X]$. Let us summarize what we have proved; the
following theorem will imply Theorem~\ref{mpwe}.

\begin{theorem}
Let $(K,v)$ be a henselian field and $(L|K,v)$ a minimal purely wild
extension. Then $L^r|K$ is a Galois extension and $L^r|K^r$ is a
$p$-elementary extension. Hence, $L|K$ satisfies condition
(\ref{semi-p-elcond}), and there exist an additive polynomial ${\cal A}
(X)\in {\cal O}_K[X]$ and an element $\vartheta\in L$ such that $L=
K(\vartheta)$ and that ${\cal A}(X)-{\cal A} (\vartheta)$ is the minimal
polynomial of $\vartheta$ over $K$. If $(K|k,v)$ is an immediate
extension of henselian fields, then ${\cal A}(X)$ may already be chosen
in ${\cal O}_k[X]$.
\end{theorem}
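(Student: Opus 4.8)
The plan is to assemble this statement from pieces that are already in place: the profinite-group lemma on ${\cal N}/{\cal D}$ proved just above, Theorem~\ref{semi-p-el}, Lemma~\ref{semi-p-elzus}, and parts b), f), g) of Theorem~\ref{ram}, together with a rescaling argument for integrality. Since $(K,v)$ is henselian and $L|K$ is a minimal purely wild extension, I would first set ${\cal G}:=\Gal K$, ${\cal N}:=\Gal K^r$, ${\cal H}:=\Gal L$, and ${\cal D}:={\cal N}\cap{\cal H}$. By part f) of Theorem~\ref{ram}, $L.K^r=L^r$, so ${\cal D}=\Gal L^r$. Minimality of $L|K$ makes ${\cal H}$ a maximal proper open subgroup of ${\cal G}$; part b) of Theorem~\ref{ram} says ${\cal N}$ is a non-trivial normal pro-$p$-subgroup; and linear disjointness of $K^r|K$ from $L|K$ (the definition of ``purely wild'') gives ${\cal N}\not\subseteq{\cal H}$, i.e.\ ${\cal H}{\cal N}={\cal G}$. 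Feeding this into the group lemma yields ${\cal D}\lhd{\cal G}$ with ${\cal N}/{\cal D}$ an elementary-abelian $p$-group and an irreducible right ${\cal G}/{\cal D}$-module; by the Galois correspondence this says exactly that $L^r|K$ is Galois and $L^r|K^r$ is $p$-elementary, so condition (\ref{semi-p-elcond}) holds for $L|K$ with $K'=K^r$, and it holds minimally because $L|K$ is a minimal extension.

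Next I would apply Theorem~\ref{semi-p-el} to this instance of (\ref{semi-p-elcond}), obtaining an additive polynomial ${\cal A}(X)\in K[X]$ and an element $\vartheta\in L$ with $L=K(\vartheta)$ and with ${\cal A}(X)-{\cal A}(\vartheta)$ the minimal polynomial of $\vartheta$ over $K$. To sharpen $K[X]$ to ${\cal O}_K[X]$, I would invoke that $L|K$ is \emph{minimal} with property (\ref{semi-p-elcond}), so Lemma~\ref{semi-p-elzus} applies: $K^r$ may be replaced by a finite Galois subextension $K'|K$, and for $b$ a generator of a normal basis of $K'|K$ and $\psi:\,N\rightarrow(\Fp,+)$ a nonzero character, the coefficients of ${\cal A}$ lie in $K\cap\Fp[\,\rho b\mid\rho\in H]$. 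Since $vK$ is cofinal in $v\tilde K$ I can choose $c\in K$ with $vcb\geq 0$; henselianity of $(K,v)$ then forces $v\sigma(cb)=vcb\geq 0$ for every $\sigma\in\Gal K$, and $cb$ still generates a normal basis of $K'|K$, so after replacing $b$ by $cb$ we get $K\cap\Fp[\,\rho b\mid\rho\in H]\subseteq{\cal O}_K$, whence ${\cal A}(X)\in{\cal O}_K[X]$. Renaming the base field, this is already Theorem~\ref{mpwe}.

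For the final assertion, suppose in addition that $(K|k,v)$ is an immediate extension of henselian fields. Part g) of Theorem~\ref{ram} gives $K^r=k^r.K$, so the finite piece $K'|K$ descends: $K'=K.k'$ for a finite Galois extension $k'|k$, and transitivity of normal bases lets me take $b$ to be a generator of a normal basis of $k'|k$ --- rescaled inside $k$ so that $vb\geq 0$, which is possible because $k'|k$ is finite and hence $vk$ is cofinal in $vk'$. Such a $b$ also generates a normal basis of $K'|K$; all its conjugates over $k$ lie in ${\cal O}_{k'}$ by henselianity of $(k,v)$, and $\Fp[\,\rho b\mid\rho\in H]\subseteq k'$, so $K\cap\Fp[\,\rho b\mid\rho\in H]\subseteq k\cap{\cal O}_{k'}={\cal O}_k$. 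Hence ${\cal A}(X)\in{\cal O}_k[X]$, completing the proof.

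The only point that is not pure bookkeeping --- and hence the main obstacle --- is the choice of $b$: it must be arranged simultaneously to generate a normal basis of the relevant extension, to have nonnegative value together with all its Galois conjugates, and, in the immediate case, to already live in $k'$. The correct order is to descend $b$ to $k'$ first and only then rescale within $k$ (using cofinality of $vk$ in $vk'$), not to rescale within $K$; once that sequencing is right, the collapse of $K\cap\Fp[\,\rho b\mid\rho\in H]$ into ${\cal O}_k$ is immediate. Everything else is a direct citation of the group lemma, Theorem~\ref{semi-p-el}, Lemma~\ref{semi-p-elzus}, and parts b), f), g) of Theorem~\ref{ram}.
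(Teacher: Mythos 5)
Your proof is correct and follows essentially the same route as the paper's: the profinite-group lemma supplies condition (\ref{semi-p-elcond}) with $K'=K^r$, Theorem~\ref{semi-p-el} produces ${\cal A}$ and $\vartheta$, Lemma~\ref{semi-p-elzus} together with a rescaling of the normal-basis generator $b$ (using cofinality of $vK$ in $v\tilde K$ and henselianity for the conjugates) puts ${\cal A}$ into ${\cal O}_K[X]$, and part~g) of Theorem~\ref{ram} handles the descent to $k$ in the immediate case. Your remark about the sequencing — first descend $b$ to $k'$, then rescale within $k$ — is a point the paper leaves implicit, and spelling it out is a small but genuine improvement in clarity; otherwise the two arguments coincide.
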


Let us conclude this section by discussing the following special case.
Assume that the value group $vK$ is divisible by all primes $q\not=p$.
Then by part c) of Theorem~\ref{ram}, $K^r|K$ is an unramified
extension. Consequently by part d) of Theorem~\ref{ram}, $\Gal K'|K
\isom\Gal \ovl{K'}|\ovl{K}$ and we may choose the element $b$ such that
$\ovl{b}$ is the generator of a normal basis of $\ovl{K'}|\ovl{K}$. It
follows that the residue mapping is injective on the ring $\Fp[\rho
b\mid\rho\in H]$ and thus, also the mapping $\tau\mapsto \ovl{\phi(\tau)}$
is injective. In this case, we obtain that
\[\ovl{\cal A}(X)=\prod_{\tau\in N}(X-\ovl{\phi(\tau)})\]
has no multiple roots and is thus separable.

%
%
\section{$p$-closed fields}                 \label{sectp-cl}
This section is devoted to the proofs of the two theorems that deal with
$p$-closed fields of positive characteristic.
\sn
{\bf Proof of Theorem~\ref{Whaples}:} \ We have to prove that\sn
{\it A field $K$ is $p$-closed if and only if it does not admit any
finite extensions of degree divisible by $p$.}
\sn
``$\Leftarrow$'': \ Assume that $K$ does not admit any finite extensions
of degree divisible by $p$. Take any $p$-polynomial $f\in K[X]$. Write
$f={\cal A}+c$ where ${\cal A}\in K[X]$ is an additive polynomial. Let
$h$ be an irreducible factor of $f$; by hypothesis, it has a degree $d$
not divisible by $p$. Fix a root $b$ of $h$ in the algebraic closure
$\tilde{K}$ of $K$. All roots of $f$ are of the form $b+a_i$ where the
$a_i$s are roots of ${\cal A}$. By part a) of Corollary~\ref{charaddpol}
the roots of ${\cal A}$ in $\tilde{K}$ form an additive group. The sum
of the roots of $h$ lies in $K$. This gives us $db+s\in K$, where $s$ is
a sum of a subset of the $a_i$s and is therefore again a root of
${\cal A}$. Likewise, $d^{-1}s$ is a root of ${\cal A}$ (as $d$ is not
divisible by $p$, it is invertible in $K$). Then $b+d^{-1}s=d^{-1}
(db+s)$ is a root of $f$, and it lies in $K$, as required.
\sn
``$\Rightarrow$'': \ (This part of the proof is due to David
Leep.) Assume that $K$ is $p$-closed. Since $K$ is
perfect, it suffices to take a Galois extension $L|K$ of degree $n$ and
show that $p$ does not divide $n$. By the normal basis theorem there is
a basis $b_1,\ldots,b_n$ of $L|K$ where the $b_i$s are the roots of some
irreducible polynomial over $K$. Since they are linearly independent
over $K$, their trace is non-zero. The elements
\[1,b_1,b_1^p,\ldots,b_1^{p^{n-1}}\]
are linearly dependent over $K$ since $[L:K]=n$. Therefore there exist
elements $d_0,\ldots,d_{n-1},e\in K$ such that the $p$-polynomial
\[f(X)\>=\>d_{n-1}X^{p^{n-1}}+\ldots+d_0X+e\]
has $b_1$ as a root. It follows that all the $b_i$s are roots of $f$.
Thus the elements $b_2-b_1,\ldots,b_n-b_1$ are roots of the additive
polynomial $f(X)-e$. Since these $n-1$ roots are linearly independent
over $K$, they are also linearly independent over the prime field
$\F_p\,$. This implies that the additive group $G$ generated by the
elements $b_2-b_1,\ldots,b_n-b_1$ contains $p^{n-1}$ distinct elements,
which therefore must be precisely the roots of $f(X)-e$. So $G+b_1$ is
the set of roots of $f$. By hypothesis, one of these roots lies in $K$;
call it $\vartheta$. There exist integers $m_2,\ldots,m_n$ such
that
\[\vartheta\>=\>m_2(b_2-b_1)+\ldots+m_n(b_n-b_1)+b_1\,.\]
In this equation take the trace from $L$ to $K$. The elements
$b_1,\ldots,b_n$ all have the same trace; hence the trace of every
$m_i(b_i-b_1)$ is $0$. It follows that the trace $n\vartheta$ of
$\vartheta$ is equal to the trace of $b_1\,$; as we have remarked
already, this trace is non-zero. Hence $n\vartheta\ne 0$, which shows
that $n$ is not divisible by $p$.                           \QED

\mn
{\bf Proof of Theorem~\ref{amp-cl=Kap}:} \ We have to prove:\sn
{\it A henselian valued field of characteristic $p>0$ is $p$-closed if
and only if it is an algebraically maximal Kaplansky field.}
\sn
We will use Theorem~\ref{Whaples} throughout the proof without further
mention. Assume first that $(K,v)$ is henselian and that $K$ is
$p$-closed. Since every finite extension of the residue field $Kv$ can
be lifted to an extension of $K$ of the same degree, it follows that
$Kv$ is $p$-closed. Likewise, if the value group $vK$ were not
$p$-divisible, then $K$ would admit an extension of degree $p$; this
shows that $vK$ is $p$-divisible. We have thus proved that $(K,v)$ is a
Kaplansky field. Since the degree of every finite extension of $K$ is
prime to $p$, it follows that $(K,v)$ is defectless, hence
algebraically maximal.

For the converse, assume that $(K,v)$ is an algebraically maximal
Kaplansky field. Since the henselization is an immediate algebraic
extension, it follows that $(K,v)$ is henselian. By Theorem~\ref{pank},
there exists a field complement $W$ of $K^r$ in $\tilde{K}$. As $vK$ is
$p$-divisible and $Kv$ is $p$-closed, hence perfect, the same theorem
shows that $W$ is an immediate extension of $K$. Hence $W=K$, which
shows that $K^r= \tilde{K}$. So every finite extension $L|K$ is a
subextension of $K^r|K$ and is therefore defectless; that is,
$[L:K]=(vL:vK)[Lv:Kv]$. As the right hand side is not divisible by $p$,
$(K,v)$ being a Kaplansky field, we find that $p$ does not divide
$[L:K]$. By Theorem~\ref{Whaples}, this proves that $K$ is $p$-closed.
\QED

\bn
\bn
{\bf References}
\newenvironment{reference}%
{\begin{list}{}{\setlength{\labelwidth}{6em}\setlength{\labelsep}{0em}%
\setlength{\leftmargin}{6em}\setlength{\itemsep}{-1pt}%
\setlength{\baselineskip}{3pt}}}%
{\end{list}}
\newcommand{\lit}[1]{\item[{#1}\hfill]}
\begin{reference}
\lit{[B]} {Bourbaki, N.$\,$: {\it Commutative algebra}, Paris (1972)}
\lit{[C1]} {Cohn, P.~M.$\,$: {\it Free rings and their relations},
London Math.\ Soc.\ Monograph {\bf 2}, London (1971)}
\lit{[C2]} {Cohn, P.~M.$\,$: {\it Free rings and their relations},
second edition, London Math.\ Soc.\ Monograph {\bf 19}, London (1985)}
\lit{[Cu--Pi]} {Cutkosky, S.~D.\ -- Piltant, O.$\,$: {\it Ramification
of valuations}, Adv.\ in Math.\ {\bf 183} (2004), 1--79}
\lit{[Del]} {Delon, F.$\,$: {\it Quelques propri\'et\'es des corps
valu\'es en th\'eo\-ries des mod\`eles}, Th\`ese Paris~VII (1981)}
\lit{[Deu]} {Deuring, M.$\,$: {\it Lectures on the theory of algebraic
functions in one variable}, Springer LNM {\bf 314} (1972)}
\lit{[Dr--Ku]} {van den Dries, L.\ -- Kuhlmann, F.-V.: {\it Images of
additive polynomials in $\F_q((t))$ have the optimal approximation
property}, Can.\ Math.\ Bulletin {\bf 45} (2002), 71--79}
\lit{[En]} {Endler, O.$\,$: {\it Valuation theory}, Springer, Berlin
(1972)}
\lit{[Ep]} {Epp, Helmut H.\ P.$\,$: {\it Eliminating Wild Ramification},
Inventiones Math.\ {\bf 19} (1973), 235--249}
\lit{[Er1]} {Ershov, Yu.\ L.$\,$: {\it On the elementary theory of
maximal valued fields III} (in Russian), Algebra i Logika {\bf 6}:3
(1967), 31--38}
\lit{[Er2]} {Ershov, Yu.\ L.$\,$: {\it Multi-valued fields}, Kluwer,
New York (2001)}
\lit{[F--Jr]} {Fried, M.\ -- Jarden, M.$\,$: Field Arithmetic,
Springer, Berlin (1986)}
\lit{[Ge]} {Gentile, E.~R.$\,$: {\it On rings with a one-sided field
of quotients}, Proc.\ AMS {\bf 11} (1960), 380--384}
\lit{[Go]} {Goss, D.$\,$: {\it Basic Structures of Function Field
Arithmetic }, Springer, Berlin (1998)}
\lit{[Gra]} {Gravett, K.~A.~H.$\,$: {\it Note on a result of Krull},
Cambridge Philos.\ Soc.\ Proc.\ {\bf 52} (1956), 379}
\lit{[Gre]} {Greenberg, M.~J.$\,$: {\it Rational points in henselian
discrete valuation rings}, Publ.\ Math.\ I.H.E.S.\ {\bf 31} (1967),
59--64}
\lit{[H]} {Huppert, B.$\,$: {\it Endliche Gruppen I}, Springer, Berlin
(1967)}
\lit{[J]} {Jacobson, N.$\,$: {\it Lectures in abstract algebra, III.\
Theory of fields and Galois theory}, Springer Graduate Texts in Math.,
New York (1964)}
\lit{[Ka1]} {Kaplansky, I.$\,$: {\it Maximal fields with valuations I},
Duke Math.\ J.\ {\bf 9} (1942), 303--321}
\lit{[Ka2]} {Kaplansky, I.$\,$: {\it Selected papers and other
writings}, Springer, New York (1995)}
\lit{[Kr]} {Krull, W.$\,$: {\it Allgemeine Bewertungstheorie},
J.\ reine angew.\ Math.\ {\bf 167} (1931), 160--196}
\lit{[Ku1]} {Kuhlmann, F.-V.: {\it Henselian function fields and tame
fields}, extended version of Ph.D.\ thesis, Heidelberg
(1990)}
\lit{[Ku2]} {Kuhlmann, F.-V.: {\it Valuation theory of fields, abelian
groups and modules}, Habilitation thesis, Heidelberg (1995)}
\lit{[Ku3]} {Kuhlmann, F.--V.$\,$: {\it Valuation theoretic and model
theoretic aspects of local uniformization},
in: Resolution of Singularities --- A
Research Textbook in Tribute to Oscar Zariski. Herwig Hauser, Joseph
Lipman, Frans Oort, Adolfo Quiros (eds.), Progress in Mathematics Vol.\
{\bf 181}, Birkh\"auser Verlag Basel (2000), 381-456}
\lit{[Ku4]} {Kuhlmann, F.-V.: {\it Elementary properties of power series
fields over finite fields}, J.\ Symb.\ Logic {\bf 66} (2001), 771--791}
\lit{[Ku5]} {Kuhlmann, F.-V.$\,$: {\it A correction to Epp's paper
``Elimination of wild ramification''}, Inventiones Math.\ {\bf 153}
(2003), 679--681}
\lit{[Ku6]} {Kuhlmann, F.-V.: {\it Value groups, residue fields and bad
places of rational function fields}, Trans.\ Amer.\ Math.\ Soc.\ {\bf
356} (2004), 4559--4600}
\lit{[Ku7]} {Kuhlmann, F.--V.$\,$: {\it Places of algebraic function
fields in arbitrary characteristic}, Advances in Math.\ {\bf 188}
(2004), 399--424}
\lit{[Ku8]} {Kuhlmann, F.--V.$\,$: {\it A classification of Artin
Schreier defect extensions and a characterization of defectless fields},
submitted}
\lit{[Ku9]} {Kuhlmann, F.--V.$\,$: {\it Elimination of Ramification
I: The Generalized Stability Theorem}, submitted}
\lit{[Ku10]} {Kuhlmann, F.--V.$\,$: {\it Elimination of Ramification
II: Henselian Rationality}, in preparation}
\lit{[Ku11]} {Kuhlmann, F.--V.$\,$: {\it The model theory of tame valued
fields}, in preparation}
\lit{[Ku12]} {Kuhlmann, F.--V.$\,$: Book in preparation. Preliminary
versions of several chapters available at:\n
{\tt http://math.usask.ca/$\,\tilde{ }\,$fvk/Fvkbook.htm}}
\lit{[Ku--Pa--Ro]} {Kuhlmann, F.-V.\ -- Pank, M.\ -- Roquette, P.$\,$:
{\it Immediate and purely wild extensions of valued fields},
Manuscripta Math.\ {\bf 55} (1986), 39--67}
\lit{[L]} {Lang, S.$\,$: {\it Algebra},
Addison-Wesley, New York (1965)}
\lit{[N]} {Neukirch, J.$\,$: {\it Algebraic number theory},
Springer, Berlin (1999)}
\lit{[O1]} {Ore, O.$\,$: {\it Theory of non-commutative polynomials},
Ann.\ Math.\ {\bf 34} (1933), 480--508}
\lit{[O2]} {Ore, O.$\,$: {\it On a special class of polynomials},
Trans.\ Amer.\ Math.\ Soc.\ {\bf 35} (1933), 559--584}
\lit{[Ph--Za]} {Pheidas, T.\ --- Zahidi, K.$\,$: {\it Elimination
theory for addition and the Frobenius map in polynomial rings},
J.\ Symb.\ Logic {\bf 69} (2004), 1006--1026}
\lit{[Pop]} {Pop, F.$\,$: {\it \"Uber die Structur der rein wilden
Erweiterungen eines K\"orpers}, manuscript, Heidelbeg (1987)}
\lit{[R--Za]} {Ribes, L.\ -- Zalesskii, P.$\,$: {\it Profinite Groups},
Springer, Berlin (2000)}
\lit{[Ri]} {Ribenboim, P.$\,$: {\it Th\'eorie des valuations}, Les
Presses de l'Uni\-versit\'e de Mont\-r\'eal, Mont\-r\'eal, 2nd ed.\
(1968)}
\lit{[Roh]} {Rohwer, T.$\,$: {\it Valued difference fields as modules
over twisted polynomial rings}, Ph.D.\ thesis, Urbana (2003).
Available at:\n
{\tt http://math.usask.ca/fvk/theses.htm}}
\lit{[V]} {V\'amos, P.$\,$: {\it Kaplansky fields and $p$-algebraically
closed fields}, Comm.\ Alg.\ {\bf 27} (1999), 629--643}
\lit{[Wa1]} {Warner, S.$\,$: {\it Nonuniqueness of immediate maximal
extensions of a valuation}, Math.\ Scand.\ {\bf 56} (1985), 191--202}
\lit{[Wa2]} {Warner, S.$\,$: {\it Topological fields}, Mathematics
studies {\bf 157}, North Holland, Amsterdam (1989)}
\lit{[Wh1]} {Whaples, G.$\,$: {\it Additive polynomials}, Duke
Math.\ Journ.\ {\bf 21} (1954), 55--65}
\lit{[Wh2]} {Whaples, G.$\,$: {\it Galois cohomology of additive
polynomials and n-th power mappings of fields}, Duke Math.\
Journ.\ {\bf 24} (1957), 143--150}
\lit{[Zi]} {Ziegler, M.$\,$: Die elementare Theorie der henselschen
K\"orper, {\sl Inaugural Dissertation}, K\"oln (1972)}
\end{reference}
\adresse
\end{document}